\renewenvironment{abstract}
{\small\begin{quote}\noindent \par{\textsc \abstractname.}}
{\noindent\end{quote}}
\numberwithin{equation}{section}
\DeclareMathAlphabet{\mathpzc}{OT1}{pzc}{m}{it}
\newcommand{\N}{\mathbbm{N}}				% Natural numbers
\newcommand{\Z}{\mathbbm{Z}}				% Integers
\newcommand{\R}{\mathbbm{R}}				% Real numbers
\newcommand{\C}{\mathbbm{C}}				% Complex numbers
\newcommand{\CP}[1]{\C\mathrm{P}^{#1}}		% Complex projective space
\newcommand{\dv}{\mathrm{d}}				% Exterior derivative
\newcommand{\idmat}{\mathds{1}}				% Identity matrix
\newcommand{\mat}[2]{\mathrm{Mat}(#1,#2)}	% Set of matrices
\newcommand{\GL}[2]{\mathrm{GL}(#1,#2)}		% General linear group
\newcommand{\U}[1]{\mathrm{U}(#1)}			% Unitary group
\newcommand{\SU}[1]{\mathrm{SU}(#1)}		% Special unitary group
\newcommand{\gl}[2]{\mathfrak{gl}(#1,#2)}	% Lie algebra of general linear group
\let\u\relax								% Lie algebra of unitary group
\newcommand{\u}[1]{\mathfrak{u}(#1)}		
\newcommand{\su}[1]{\mathfrak{su}(#1)}		% Lie algebra of special unitary group
\DeclareMathOperator{\tr}{Tr}						% Trace
\DeclareMathOperator{\diag}{diag}					% Diagonal
\DeclareMathOperator{\res}{res}						% Residue
\DeclareMathOperator{\adj}{adj}						% Adjugate matrix
\newcommand{\rat}[3]{\mathrm{Rat}_{#1}(#2,#3)}			% Rational maps
\newcommand{\brat}[3]{\mathrm{Rat}^*_{#1}(#2,#3)}		% Based rational maps
\DeclarePairedDelimiter\floor{\lfloor}{\rfloor}
\newcommand{\setc}[2]{\left\lbrace#1\vphantom{#2}\right.\left|\vphantom{#1}\vphantom{#2}\right.\left.#2\vphantom{#1}\right\rbrace}
\newcommand{\abs}[1]{\left\lvert#1\right\rvert}
\newcommand{\fullstop}{\;\text.}
\newcommand{\comma}{\;\text,}
\theoremstyle{plain}
\newtheorem{theorem}{Theorem}[section]
\theoremstyle{plain}
\newtheorem*{theorem*}{Theorem}
\theoremstyle{plain}
\theoremstyle{plain}
\newtheorem{corollary}[theorem]{Corollary}
\theoremstyle{plain}
\newtheorem{proposition}[theorem]{Proposition}
\theoremstyle{plain}
\newtheorem{lemma}[theorem]{Lemma}
\theoremstyle{plain}
\newtheorem*{lemma*}{Lemma}
\theoremstyle{definition}
\newtheorem{definition}[theorem]{Definition}
\theoremstyle{definition}
\newtheorem*{definition*}{Definition}
\theoremstyle{remark}
\newtheorem{remark}[theorem]{Remark}
\theoremstyle{remark}
\newtheorem*{remark*}{Remark}
\theoremstyle{remark}
\theoremstyle{remark}
\newtheorem*{notation*}{Notation}
\newcounter{tmpa}
\newcounter{tmpb}
\newcounter{tmpc}
\begin{document}

\title{\normalsize\MakeUppercase{\bfseries 
Nahm's Equations and Rational Maps from $\CP{1}$ to $\CP{\lowercase{n}}$}}
\author{\small\textsc{
Max Schult}}
\date{\footnotesize\textsc\today}
\maketitle
\vspace{-1cm}

% --- Main content ---

\begin{abstract}
We consider Nahm's equations on a bounded open interval with first order poles at the ends. By imposing further boundary conditions, extended moduli spaces are identified with spaces of rational maps from $\CP{1}$ to $\CP{n}$. Having the residues at one end define sums of irreducible representations of $\su{2}$, the dimensions of those summands correspond to holomorphic charge on the rational map side. Technical difficulties arise due to half powers in the boundary conditions. Further, a symplectomorphic identification with moduli spaces corresponding to spaces of rational maps into complete flag varieties is given.
\end{abstract}
\section{Introduction}

Building on work of Nahm, Hitchin \cite{hitchin1983} described the moduli space of $\SU{2}$-monopoles on $\R^3$ in terms of solutions to Nahm's equations on an open interval with first order poles at both ends and residues defining irreducible representations of $\su{2}$. In his paper on the classification of monopoles \cite{donaldson1984}, Donaldson used this to identify the moduli space of framed $\SU{2}$-monopoles of charge $k$ with the set of based degree $k$ rational maps from $\CP{1}$ to $\CP{1}$. Hurtubise extended this result to $\SU{N}$-monopoles with maximal symmetry breaking by establishing a correspondence to rational maps into complete flag varieties \cite{hurtubise1989}. To our knowledge, rational maps into partial flag varieties have not been studied in this context. This leads us to dropping the irreducibility requirement of the residues at one end and consider for $n\geq 2$, $N=n+1$, a bounded open interval $\lambda=(\lambda_1,\lambda_N)\subset\R$ and $k\in\mathbb{N}^n$ with $m_1=m$ for $m_i=\sum_{j=i}^nk_j$, solutions $T\in C^\omega(\lambda,\u{m})\otimes\R^3$ to Nahm's equations
\begin{align}\label{nahms eq}
\dot{T}_i+[T_j,T_k]=0\comma
\end{align}
$(ijk)$ cyclic permutations of $(123)$, with the following boundary conditions:
\begin{enumerate}[label=(\roman*)]
\item $T_i$ is meromorphic near $\lambda_1$ with residue conjugate under $\U{m}$ to $\tau_i^m$, and
\item there exists $u\in\U{m}$ such that for $z<0$ of small modulus
\begin{align*}
uT_i(\lambda_N+z)u^{-1}=\frac{\diag\left(\tau_i^{k_n},\dots,\tau_i^{k_1}\right)}{z}+\left(\begin{matrix}z^\frac{k_{n}-k_n}{2}O(1)&\dots&z^\frac{k_1-k_n}{2}O(1)\\
\vdots&\ddots&\vdots\\
z^\frac{k_1-k_n}{2}O(1)&\dots&z^\frac{k_1-k_1}{2}O(1)\end{matrix}\right)\comma
\end{align*}
\end{enumerate}
where $\tau_j^\ell$ is the image of $\tau_j=-\frac{i}{2}\sigma_j$ under the standard $(\ell+1)$-dimensional representation of $\su{2}$ with $\sigma_j$ the $j^\text{th}$ Pauli matrix. $\U{m}$ acts on the set of all such solutions, so one can consider the quotient. As for monopoles, a natural bijective correspondence to spaces of rational maps arises when considering extended moduli spaces\footnote{Donaldson calls the moduli space of framed $\SU{2}$-monopoles the \emph{extended moduli space} \cite{donaldson1984}, inspiring our terminology here.}. Hence we consider tuples $(T,v)$ with $T$ as above and $v=(v_1,\dots,v_N)\in(\C^m)^*\times(\C^m)^n$ such that
\begin{enumerate}[label=(\roman*)]
\setcounter{enumi}{2}
\item $v_1^T$ lies in the $-i\frac{m-1}{2}$ eigenspace of $\res_{\lambda_1}(\alpha)^T$,
\item $v_{j+1}$ lies in the $i\frac{k_{j}-1}{2}$ eigenspace of $\res_{\lambda_N}(\alpha)$,
\item $\abs{v_{i}}=1$ and $\langle v_{i+1},v_{j+1}\rangle=0$,
\end{enumerate}
where $(\alpha,\beta)\in C^\omega(\lambda,\mathfrak{u}(m))\otimes(\R\times\C)$ corresponds to $T$ under a given isometric isomorphism $\R^3\cong\R\times\C$. $\res_{\lambda_N}$ here always refers to the $(t-\lambda_1)^{-1}$ coefficient, even if half powers appear. Induced by the standard action of $\U{m}$ on $\C^m$, one obtains an action of $\U{m}$ on the set $\mathcal{A}_k$ of all tuples $(T,v)$ as above. The quotient
\begin{align*}
\mathcal{M}_k=\faktor{\mathcal{A}_k}{\U{m}}
\end{align*}
then plays the role of the extended moduli space.

In \cite{murray1989}, rational maps into flag varieties are stratified, in addition to their degree, by their holomorphic charge. This turns out to correspond to the dimensions of the irreducible summands of the reducible representation of $\su{2}$, while the degree corresponds to the dimension of the irreducible representation. This is reflected in the main theorem of the present paper as follows.

\begin{theorem*}
Given an isomorphism $\R^3\cong\R\times\C$, compatible with the usual metrics, there is a natural bijective correspondence between the extended moduli space $\mathcal{M}_k$ and the quasi-affine algebraic variety $R_k$ of based rational maps from $\CP{1}$ to $\CP{n}$ of holomorphic charge $k$.
\end{theorem*}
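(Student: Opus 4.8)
\emph{Strategy.} The plan is to adapt the method of Donaldson \cite{donaldson1984} and Hurtubise \cite{hurtubise1989}, factoring the correspondence through the complexification of Nahm's equations. Using the given $\R^3\cong\R\times\C$ one rewrites the data as a pair $(\alpha,\beta)$ --- with $\alpha$ built from $T_1$ and $\beta$ the corresponding $\C$-linear combination of $T_2,T_3$ --- so that the equations \eqref{nahms eq} separate into a holomorphic equation, schematically $\dot\beta+[\alpha,\beta]=0$, invariant under the complex gauge group $\mathcal{G}_{\C}$ of $\GL{m}{\C}$-valued functions with the boundary behaviour preserving (i)--(ii), and a real equation which is the vanishing of a moment map for the unitary gauge group $\mathcal{G}$. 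Together with the framing $v$ --- whose unitarity in (v) relaxes, on the complex side, to linear independence plus a flag condition --- this realizes $\mathcal{A}_k$ inside a complex space $\mathcal{A}_k^{\C}$ carrying a $\mathcal{G}_{\C}$-action, and the first (analytically heaviest) step is a Kempf--Ness type theorem: every $\mathcal{G}_{\C}$-orbit meeting the zero set of the moment map meets it in a single $\mathcal{G}$-orbit, so $\mathcal{M}_k\cong\faktor{\mathcal{A}_k^{\C}}{\mathcal{G}_{\C}}$. This reduces the theorem to an algebraic bijection between the complex quotient and $R_k$.

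\emph{From Nahm data to a rational map.} Given a complex solution, solve the linear system $\dot w+\alpha w=0$ on $\lambda$; the fundamental solution $g(t)$ conjugates $\beta$ to a constant matrix $\beta_0\in\gl{m}{\C}$. By the theory of Fuchsian systems, condition (i) singles out a distinguished line of solution data at $\lambda_1$, which (iii) normalizes to a vector $w_1$ (a renormalized limit of $v_1g(t)^{-1}$); similarly (ii) decomposes the solutions near $\lambda_N$ into blocks whose sizes are recorded by $k$, and (iv)--(v) select one vector from each, giving renormalized limits $w_2,\dots,w_N$ of $g(t)v_j$. One then sets, with $\zeta$ the affine coordinate on $\CP{1}$,
\[
\zeta\;\longmapsto\;\bigl[\,\det(\zeta-\beta_0)\,:\,w_1^{T}\,\adj(\zeta-\beta_0)\,w_2\,:\;\cdots\;:\,w_1^{T}\,\adj(\zeta-\beta_0)\,w_N\,\bigr]\fullstop
\]
Here $\det(\zeta-\beta_0)$ is monic of degree $m=m_1$, so the map is based of degree $m$; cyclicity of $w_1$ for $\beta_0$ and spanning of the $w_j$ make it a genuine rational map, and a computation with the flag of the $w_j$ shows the greatest common divisors of the successive minors drop in steps $k_1,\dots,k_n$, i.e. the holomorphic charge in the sense of \cite{murray1989} equals $k$. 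Replacing $g$ by $hg$ conjugates $\beta_0$ and rescales the $w_i$, leaving the projective tuple fixed, so this descends to a map $\faktor{\mathcal{A}_k^{\C}}{\mathcal{G}_{\C}}\to R_k$.

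\emph{The inverse.} Conversely, a based rational map of holomorphic charge $k$ is equivalent, by partial fractions, to a triple $(\beta_0;w_1;w_2,\dots,w_N)$ of the above type --- a matrix with a cyclic covector and a nested family of vectors whose flag type is dictated by $k$ --- unique up to simultaneous conjugation; this is the $\CP{n}$ instance of the standard dictionary between matrices-with-vectors and based rational maps into flag varieties (compare \cite{hurtubise1989}). From such a triple one reconstructs the complex Nahm flow $\beta(t)=g(t)\beta_0g(t)^{-1}$ by choosing $g$, equivalently $\alpha$, so that the singularity at $\lambda_1$ matches (i) and the expansion at $\lambda_N$ matches (ii) with block sizes from $k$ --- a solvable singular linear ODE problem whose residual freedom is exactly $\mathcal{G}_{\C}$. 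Feeding the result into the Kempf--Ness theorem yields a genuine solution of Nahm's equations, unique up to $\U{m}$, and rescaling the $w_i$ to unit norm recovers $v$. One then checks that these two constructions are mutually inverse and natural in the isomorphism $\R^3\cong\R\times\C$; retaining the full flag of the $w_j$ rather than its projectivization gives the symplectomorphism with the flag-variety moduli space announced in the abstract.

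\emph{Main obstacle.} The crux --- and what is genuinely new relative to \cite{donaldson1984,hurtubise1989} --- is the half-integer exponents in (ii): the off-diagonal blocks there carry powers $z^{(k_i-k_j)/2}$ of the local coordinate near $\lambda_N$, so the linear system $\dot w+\alpha w=0$ is not of classical Fuchsian type --- its solutions involve fractional powers, different blocks scale at different half-integer rates, and a branch of the square root must be fixed. Making sense of the block decomposition and of the limits $g(t)v_j$ forces one to work in function spaces with fractional weights near $\lambda_N$, and one must verify that these fractional powers cancel in the pairings $w_1^{T}\adj(\zeta-\beta_0)w_j$ --- so that the output is a genuine rational map of charge $k$ --- and that no fractional monodromy obstructs either the Kempf--Ness step or the reconstruction. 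Establishing this well-posedness, together with surjectivity onto all of $R_k$, is where the bulk of the work lies.
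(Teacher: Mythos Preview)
Your outline is correct and follows the paper's route almost exactly: pass from $\mathcal{M}_k$ to real $k$-Nahm complexes $\mathcal{N}_k^\R$, then to complex $k$-Nahm complexes $\mathcal{N}_k$ via a Kempf--Ness argument, then to $\GL{m}{\C}$-orbits of pairs $(B,w)$, and finally to $R_k$ by $f_i(z)=w_1(z\idmat-B)^{-1}w_{i+1}$.

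Two places where the paper is sharper than your sketch, worth flagging. First, on the algebraic side: the conditions on $(B,w)$ are not merely cyclicity of $w_1$ and spanning of $(w_2,\dots,w_N)$, but also a family of determinant \emph{vanishing} constraints $\det M(k+l(e_i-e_j))=0$ for $i>j$, $1\le l\le k_j-k_i$; these encode the extra nilpotency in the off-diagonal blocks of the normal form at $\lambda_N$ and are what actually pins down the holomorphic charge. Correspondingly, the paper characterizes $R_k$ not via gcds of minors but via polynomial syzygy conditions $A_\ell(F)$ on $(Q,P_1,\dots,P_n)$, and matching these to the determinant conditions is the content of the classification. Second, on the analytic side: the half-power difficulty does not sit in the pairings $w_1^T\adj(\zeta-\beta_0)w_j$ --- those are pure linear algebra once the limits exist --- but squarely inside the Kempf--Ness step. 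Before Donaldson's variational argument can run, one must gauge the $z^{-1/2}$ coefficient of $F(\alpha,\beta)$ near $\lambda_N$ to zero, which is a delicate explicit construction; and after the real solution is produced, a separate $\su{2}$-representation-theoretic argument on the eigenvalues of $\sum\tau_i\otimes D_i$ is needed to show the off-diagonal blocks really vanish to the required half-integer orders $\tfrac{|k_i-k_j|}{2}$.
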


Donaldson introduced the notion of \emph{based} rational maps from $\CP{1}$ to $\CP{1}$ as those with fixed value $0$ at $\infty$. Observing that this implies that a certain flag associated to the rational map is anti-standard at infinity leads to a natural generalization of the definition to rational maps from $\CP{1}$ to $\CP{n}$.

The proof of this theorem follows ideas of Donaldson in \cite{donaldson1984} and Hurtubise in \cite{hurtubise1989}. Appropriate adaptations of Nahm complexes and real Nahm complexes together with corresponding equivalence relations are introduced. The appearing half powers give rise to an additional difficulty when showing that every Nahm complex is equivalent to a real Nahm complex. Namely, solving the real equation up to order zero involves the construction of additional gauge transformations, yielding different approximate solutions.

Recent work of Charbonneau and Nagy \cite{charbonneaunagy2022} shows that for $\lambda_1<\lambda_N=-n\lambda_1$, solutions to Nahm's equations as considered here produce $\SU{N}$-monopoles with non-maximal symmetry breaking, as such solutions $T$ define Nahm data $\mathcal{N}=(\mathcal{F},\mathcal{T})$ with framing $\mathcal{F}=(\C^m,\C^m)$ and $\mathcal{T}=(\dv,T)$ of symmetry breaking type $\mathbb{T}=(\underline{\lambda},\underline{r},\underline{k})$ with $\underline{\lambda}=(\lambda_1,\lambda_N)$, $\underline{r}=(1,n)$ and $\underline{k}=(m,-k)$. The main theorem of the present paper then identifies the extended moduli space over the moduli space $\mathcal{M}^\mathbb{T}$ of Nahm data with symmetry breaking type $\mathbb{T}$ with the set of based rational maps from $\CP{1}$ to $\CP{n}$.

A similar identification of moduli spaces of solutions to Nahm's equations with spaces of rational maps from $\CP{1}$ to complete flag varieties has been given by Hurtubise in \cite{hurtubise1989} in the context of classifying monopoles. In the last two sections of the present paper, a symplectomorphism between certain such moduli spaces and $\mathcal{M}_k$ is given. Identifying the corresponding spaces of rational maps yields then another proof of the main result.

\section{Nahm's equations and Nahm complexes}\label{nahms equation and nahm complexes}

As in \cite{donaldson1984}, view Nahm's equations (\ref{nahms eq}) as the anti-self-duality equations $\dv A+A\wedge A=-{*}(\dv A+A\wedge A)$ for the connection $\dv+A$ with $A=\sum_{i=1}^3T_i\dv p_i$, where $(s,p_1,p_2,p_3)$ are standard coordinates on $\lambda\times\R^3$. $u\in C^\infty(\lambda,\U{m})$ acts on $\dv+A$ as
\begin{align*}
u.(\dv+A)=\dv-\dot uu^{-1}\dv s+uAu^{-1}\comma
\end{align*}
which leads to introducing a skew-adjoint $\dv s$-component $T_0$. The anti-self-duality equations then yield equations
\begin{align}\label{nahms eq with t0}
\dot T_i+[T_0,T_i]+[T_j,T_k]=0\comma
\end{align}
$(ijk)$ cyclic permutations of $(123)$, which are invariant under the action
\begin{align*}
u.(T_0,T_1,T_2,T_3)=(uT_0u^{-1}-\dot uu^{-1},uT_1u^{-1},uT_2u^{-1},uT_3u^{-1})\fullstop
\end{align*}
Changing to a gauge with $T_0=0$ then gives back Nahm's equations in the form (\ref{nahms eq}).

Using the isomorphism $\R^3\cong\R\times\C$ from the assumptions of the main theorem, one may introduce complex coordinates, e.g. $s+ip_1$, $p_2+ip_3$, and set, correspondingly,
\begin{align}\label{from t to a b}
\alpha=\tfrac{1}{2}(T_0+iT_1)\comma&&\beta=\tfrac{1}{2}(T_2+iT_3)\fullstop
\end{align}
The equations (\ref{nahms eq with t0}) then take the form
\begin{align}
\dot\beta+2[\alpha,\beta]&=0\comma&&\text{(``complex equation")}\label{complex eq}\\
F(\alpha,\beta)=(\alpha+\alpha^*)\dot{\hphantom{)}}+2([\alpha,\alpha^*]+[\beta,\beta^*])&=0\fullstop&&\text{(``real equation")}\label{real eq}
\end{align}
Define an action of $g\in C^\infty(\lambda,\GL{m}{\C})$ via
\begin{align*}
g.(\alpha,\beta)=(g\alpha g^{-1}-\tfrac{1}{2}\dot gg^{-1},g\beta g^{-1})
\end{align*}
and observe that this leaves the complex equation invariant, while the real equation is only invariant under unitary gauge transformations. In view of the extended moduli space $\mathcal{M}_k$ and the boundary conditions (i) to (v) in the introduction, this leads to the following definitions, analogous to those in \cite{hurtubise1989}. In preparation, denote
\begin{align*}
x(\ell)&=\diag\left(-\frac{\ell-1}{4},-\frac{\ell-3}{4},\dots,\frac{\ell-1}{4}\right)\comma&y(\ell)&=\left(\begin{matrix}0&0\\\idmat_{\ell-1}&0\end{matrix}\right)\comma
%a_1&=\diag\left(-\tfrac{k_1-1}{4},\dots,\tfrac{k_1-1}{4},-\tfrac{k_2-1}{4},\dots,\tfrac{k_2-1}{4}\right)\comma&a_2&=\diag\left(\tfrac{m-1}{4},\dots,-\tfrac{m-1}{4}\right)\comma\\
%b_1&=\left(\begin{matrix}0&0\\\idmat_{k_1-1}&0\end{matrix}\right)\oplus\left(\begin{matrix}0&0\\\idmat_{k_2-1}&0\end{matrix}\right)
%=\left(\small\begin{matrix}0\\1&0\\&\ddots&\ddots\\&&1&0\\&&&0&0\\&&&&1&0\\&&&&&\ddots&\ddots\\&&&&&&1&0\end{matrix}\right)
%\comma&b_2&=\left(\begin{matrix}0&\idmat_{m-1}\\0&0\end{matrix}\right)
%=\left(\small\begin{matrix}0&1\\&0&\ddots\\&&\ddots&1\\&&&0\end{matrix}\right)
%\fullstop
\end{align*}
and set $x_1=-x(m)$, $y_1=y(m)^T$ and
\begin{align*}
x_N=\diag\left(x(k_n),\dots,x(k_1)\right)\comma&&y_N=\diag\left(y(k_n),\dots,y(k_1)\right)\fullstop
\end{align*}
For ease of notation when referring to matrix entries of corresponding block matrices define
\begin{align*}
m_i=\sum_{j=i}^nk_j\fullstop
\end{align*}

\begin{definition}\label{def k nahm complex}
A tuple $((\alpha,\beta),v)$ is called \emph{$k$-Nahm complex} if $(\alpha,\beta)\in C^\infty(\lambda,\gl{m}{\C})$ is a solution of (\ref{complex eq}) satisfying the following boundary conditions:
\begin{enumerate}[label=(\roman*)]
\item $\alpha$ and $\beta$ are meromorphic near $\lambda_1$ with residues conjugate under $\GL{m}{\C}$ to $x_1$ and $y_1$,
\item there exists $g\in\GL{m}{\C}$ such that for $z<0$ of small modulus
\begin{align*}
g\alpha(\lambda_N+z)g^{-1}&=\frac{x_N}{z}+\left(\begin{matrix}z^\frac{k_{n}-k_n}{2}O(1)&\dots&z^\frac{k_1-k_n}{2}O(1)\\
\vdots&\ddots&\vdots\\
z^\frac{k_1-k_n}{2}O(1)&\dots&z^\frac{k_1-k_1}{2}O(1)\end{matrix}\right)\comma\\
g\beta(\lambda_N+z)g^{-1}&=\frac{y_N}{z}+\left(\begin{matrix}z^\frac{k_{n}-k_n}{2}O(1)&\dots&z^\frac{k_1-k_n}{2}O(1)\\
\vdots&\ddots&\vdots\\
z^\frac{k_1-k_n}{2}O(1)&\dots&z^\frac{k_1-k_1}{2}O(1)\end{matrix}\right)\comma
\end{align*}
\end{enumerate}
and $v=(v_1,\dots,v_N)\in(\C^m)^*\times(\C^m)^n$ such that
\begin{enumerate}[label=(\roman*)]
\setcounter{enumi}{2}
\item $v_1^T$ is an element of the $\frac{m-1}{4}$-eigenspace of $\res_{\lambda_1}(\alpha)^T$ and is cyclic for $\res_{\lambda_1}(\beta)^T$, and
\item $v_{i+1}$ is an element of the $-\frac{k_i-1}{4}$-eigenspace of $\res_{\lambda_N}(\alpha)$ with
\begin{align*}
\det\left(\begin{matrix}v_{N}&\dots&\res_{\lambda_N}(\beta)^{k_n-1}v_{N}&\dots&v_{2}&\dots&\res_{\lambda_N}(\beta)^{k_1-1}v_{2}\end{matrix}\right)\neq0\comma
\end{align*}
and $\res_{\lambda_N}(\beta)^{k_i}v_{i+1}=0$.
\end{enumerate}
\end{definition}

\begin{definition}
Denote by $\mathcal{G}$ the group of smooth $\GL{m}{\C}$-valued maps on $\lambda$ that extend continuously and invertibly to the boundary. Two $k$-Nahm complexes $((\alpha,\beta),v)$ and $((\alpha',\beta'),v')$ are defined to be equivalent iff there exists $g\in\mathcal{G}$ such that
\begin{align*}
(\alpha',\beta')=g.(\alpha,\beta)\comma&&v'=g.v=(v_1g^{-1},gv_2,\dots,gv_N)\fullstop
\end{align*}
Denote the arising quotient by $\mathcal{N}_k$.
\end{definition}

\begin{definition}\label{def real k nahm complex}
A \emph{real $k$-Nahm complex} is a $k$-Nahm complex $((\alpha,\beta),v)$ such that $(\alpha,\beta)$ solves (\ref{real eq}) and satisfies the boundary conditions:
\begin{enumerate}[label=(\roman*)]
\item $\res_{\lambda_1}(\alpha)$ and $\res_{\lambda_1}(\beta)$ are conjugate under $\U{m}$ to $i\tau_1^m$ and $\tau_2^m+i\tau_3^m$,
\item there exists $u\in\U{m}$ such that for $z<0$ of small modulus
\begin{align*}
&u\alpha(\lambda_N+z)u^{-1}=\frac{\diag\left(i\tau_1^{k_1},\dots,i\tau_1^{k_n}\right)}{2z}+\left(\begin{matrix}z^\frac{k_{n}-k_n}{2}O(1)&\dots&z^\frac{k_1-k_n}{2}O(1)\\
\vdots&\ddots&\vdots\\
z^\frac{k_1-k_n}{2}O(1)&\dots&z^\frac{k_1-k_1}{2}O(1)\end{matrix}\right)\comma\\
&u\beta(\lambda_N+z)u^{-1}\\
&\quad=\frac{\diag\left(\tau_2^{k_1}+i\tau_3^{k_1},\dots,\tau_2^{k_n}+i\tau_3^{k_n}\right)}{2z}+\left(\begin{matrix}z^\frac{k_{n}-k_n}{2}O(1)&\dots&z^\frac{k_1-k_n}{2}O(1)\\
\vdots&\ddots&\vdots\\
z^\frac{k_1-k_n}{2}O(1)&\dots&z^\frac{k_1-k_1}{2}O(1)\end{matrix}\right)\comma
\end{align*}
where $\tau^{k_i}_j$ have been defined in the introduction,
\end{enumerate}
and with $v=(v_1,\dots,v_N)$ satisfying
\begin{enumerate}[label=(\roman*)]
\setcounter{enumi}{2}
\item $\abs{v_{i}}=1$ and $\langle v_{i+1},v_{j+1}\rangle=0$.
\end{enumerate}
\end{definition}

\begin{definition}
Denote by $\mathcal{G}^\R$ the subgroup of $\mathcal{G}$ of $\U{m}$-valued maps. Two real $k$-Nahm complexes are defined to be equivalent iff they differ by an element of $\mathcal{G}^\R$. Denote the arising quotient by $\mathcal{N}^\R_k$.
\end{definition}

\begin{proposition}\label{real nahm complexes and nahms eq}
(\ref{from t to a b}) induces a bijective correspondence between $\mathcal{M}_k$ and $\mathcal{N}_k^\R$.
\begin{proof}
The map sending $(T,v)$ to $((\alpha,\beta),v)$ according to (\ref{from t to a b}) induces a well-defined injective map on equivalence classes. For the other direction, invert the relation (\ref{from t to a b}) and then change to a gauge with $T_0=0$. Note that if $T'$ corresponds to $(\alpha,\beta)$ via (\ref{from t to a b}), changing to a gauge with $T_0=0$ is equivalent to solving $\dot g=T_0'g$. From the boundary conditions for $\alpha$ it follows that conjugation by $g$ preserves the given boundary conditions.
\end{proof}
\end{proposition}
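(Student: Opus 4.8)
The statement is essentially a dictionary between two descriptions of the same objects tied together by (\ref{from t to a b}): on the $\mathcal{M}_k$ side one has gauged the $\dv s$-component to zero and retains only a constant $\U{m}$-symmetry, whereas on the $\mathcal{N}_k^\R$ side one keeps the full gauge group $\mathcal{G}^\R$ of $\U{m}$-valued maps but tolerates a nonzero $T_0$. I would prove it by exhibiting the map in both directions and checking that each descends to the quotients and that the two are mutually inverse there; the only non-formal ingredient is an ODE estimate at the endpoints.

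\emph{Forward direction.} Given $(T,v)\in\mathcal{A}_k$, view $T$ as a solution of (\ref{nahms eq with t0}) with $T_0=0$ and pass to $(\alpha,\beta)$ via (\ref{from t to a b}); by the identities recalled at the start of Section~\ref{nahms equation and nahm complexes} this solves both (\ref{complex eq}) and (\ref{real eq}). I would then check that $((\alpha,\beta),v)$ is a real $k$-Nahm complex: the residue normal forms for the $T_i$ at $\lambda_1$ and $\lambda_N$ go over, under (\ref{from t to a b}) with $T_0=0$, into those of Definition~\ref{def real k nahm complex}(i)--(ii) and, upon forgetting unitarity, into Definition~\ref{def k nahm complex}(i)--(ii); likewise the eigenspace and orthonormality conditions on $v$ become Definition~\ref{def real k nahm complex}(iii) together with Definition~\ref{def k nahm complex}(iii)--(iv). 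This is a computation with standard $\su{2}$-representations, in which the fractional-power off-diagonal blocks are carried along unchanged and the cyclicity of $v_1^T$ for $\res_{\lambda_1}(\beta)^T$ is automatic (a highest-weight vector of the semisimple residue is cyclic for the principal nilpotent residue). Since a constant $U\in\U{m}$ acts on $((\alpha,\beta),v)$ as the corresponding constant element of $\mathcal{G}^\R$, the map descends to $\mathcal{M}_k\to\mathcal{N}_k^\R$; and it is injective on classes, for if $u\in\mathcal{G}^\R$ relates two images then, comparing the Hermitian and skew-Hermitian parts of $u.\alpha=\alpha'$, the matrices $\alpha,\alpha'$ are Hermitian (as $T_0=T_0'=0$) while $-\tfrac12\dot uu^{-1}$ is skew-Hermitian, so $u$ is constant and the two sources lie in one $\U{m}$-orbit.

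\emph{Inverse direction.} Starting from a real $k$-Nahm complex $((\alpha,\beta),v)$, I would invert (\ref{from t to a b}) to get $(T_0,\dots,T_3)$ solving (\ref{nahms eq with t0}) with $T_0=\alpha-\alpha^*$ skew-Hermitian, and then solve the linear ODE $\dot g=T_0g$ with $g$ starting in $\U{m}$; skew-Hermiticity of $T_0$ keeps $g$ unitary, and $g.(T_0,\dots,T_3)$ has vanishing $\dv s$-component, producing a solution $T$ of (\ref{nahms eq}). The point to verify is $g\in\mathcal{G}^\R$, i.e.\ that $g$ extends continuously and invertibly to $\bar\lambda$: at either endpoint the residue of $\alpha$ is $\U{m}$-conjugate to $i\tau_1^m$ (or its block analogue at $\lambda_N$), which is \emph{Hermitian}, so the $z^{-1}$-term of $T_0=\alpha-\alpha^*$ cancels and $T_0$ is at worst fractional-power singular there, hence integrable, so $g$ has a continuous invertible limit; being unitary, $g$ then preserves conditions (i)--(iii) of Definition~\ref{def real k nahm complex}, so $(T,g.v)\in\mathcal{A}_k$. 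Different starting points for $g$ change $T$ by a constant $\U{m}$, and a $\mathcal{G}^\R$-move on $((\alpha,\beta),v)$ changes $g$ by left multiplication, so the assignment is well defined on $\mathcal{N}_k^\R\to\mathcal{M}_k$. Finally, forward-then-inverse returns the original class (the two representatives differ by $g\in\mathcal{G}^\R$) and inverse-then-forward applied to $(T,v)$ with $T_0=0$ returns $(T,v)$ with $g\equiv\idmat$; hence the two maps are mutually inverse and the proposition follows.

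\textbf{Where the difficulty lies.} The one place that needs real work is the claim that $g$ solving $\dot g=T_0g$ extends invertibly across $\lambda_1$ and $\lambda_N$ in the presence of the half-powers: the crucial observation is that the potential simple pole of $T_0$ is killed by the Hermiticity of $\res(\alpha)$, after which the surviving fractional powers are harmless for the flow. Everything else --- matching the residue normal forms and transporting the conditions on $v$ --- is bookkeeping, though it does require care with the $\su{2}$-representation conventions and the block structure at $\lambda_N$.
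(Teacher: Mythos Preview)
Your proposal is correct and follows essentially the same route as the paper's proof: forward via (\ref{from t to a b}), inverse by solving the first-order ODE for the gauge to kill $T_0$, with the key observation that $T_0=\alpha-\alpha^*$ loses its simple pole because $\res(\alpha)$ is Hermitian. You simply make explicit the steps (injectivity via the Hermitian/skew-Hermitian split, integrability of $T_0$ at the endpoints, descent to quotients) that the paper compresses into a few lines.
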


The following result shows that every equivalence class of $k$-Nahm complexes has a representative of a simplified form. This is useful for computations, which will be exploited in section \ref{solving the real equation}, and gives a recipe for constructing $k$-Nahm complexes in section \ref{the classification of k nahm complexes}.

\begin{proposition}[``Normal form"]\label{normal form}
Let $((\alpha,\beta),v)$ be a $k$-Nahm complex.
\begin{enumerate}[label=(\alph*)]
\item $(\alpha,\beta)$ is locally equivalent (i.e. by an element of $\mathcal{G}$) to $(0,\beta_0)$, where $\beta_0$ is constant.
\item The given $k$-Nahm complex is equivalent to a $k$-Nahm complex $((\alpha_{\mathrm{st}},\beta_{\mathrm{st}}),v_{\mathrm{st}})$ with
\begin{align*}
(g_\mathrm{st}.\alpha_{\mathrm{st}})(\lambda_1+z)&=-\frac{x_1}{z}\comma&
\beta_{\mathrm{st}}(\lambda_1+z)&=\left(\small\begin{matrix}0&&&&-z^{m-1}q_1\\z^{-1}&0&\\&\ddots&\ddots&&\vdots\\&&z^{-1}&0&\\&&&z^{-1}&-q_m\end{matrix}\right)^T\comma
\end{align*}
for $z>0$ small with $q\in\C^m$ and
\begin{align*}
g_\mathrm{st}(\lambda_1+z)=\left(\begin{matrix}
z^{m-1}q_2&\dots&zq_m&1\\
\vdots&\iddots&1\\
zq_m&\iddots\\
1
\end{matrix}\right)\comma
\end{align*}
further
\begin{align*}
\alpha_{\mathrm{st}}(\lambda_N+z)&=\frac{x_N}{z}
%\beta_{\mathrm{st}}(\lambda_1+z)&=\left(\small\begin{matrix}0&&&&-z^{k_1-1}%C_{11}&&&&-z^{\frac{k_1+k_2}{2}-1}C_{12}\\
%z^{-1}&0\\
%&\ddots&\ddots&&\vdots&&&&\vdots\\
%&&z^{-1}&0\\
%&&&z^{-1}&-C_{k_11}&&&&-z^\frac{k_2-k_1}{2}C_{k_12}\\
%&&&&&0\\
%&&&&\vdots&z^{-1}&0&&\vdots\\
%&&&&&&\ddots&\ddots\\
%&&&&-z^\frac{k_1-k_2}{2}C_{m1}&&&z^{-1}&-C_{m2}\end{matrix}\right)\comma
\end{align*}
and $\beta_\mathrm{st}(\lambda_N+z)$ has $(i,i)$-block
\begin{align*}
\left(\begin{matrix}0&&&&z^{k_{N-i}-1}C_{(m_{N+1-i}+1)i}\\z^{-1}&0&\\&\ddots&\ddots&&\vdots\\&&z^{-1}&0&\\&&&z^{-1}&C_{m_{N-i}i}\end{matrix}\right)\comma
\end{align*}
and $(i,j)$-block with $i\neq j$
\begin{align*}
\left(\begin{matrix}0&\dots&0&z^{\frac{k_{N-j}+k_{N-i}}{2}-1}C_{(m_{N+1-i}+1)j}\\
\vdots&\ddots&\vdots&\vdots\\
0&\dots&0&z^\frac{k_{N-j}-k_{N-i}}{2}C_{m_{N-i}j}\end{matrix}\right)=z^\frac{\abs{k_{N-i}-k_{N-j}}}{2}O(1)
\end{align*}
for $z<0$ of small modulus with $C\in\mat{m\times n}{\C}$ and $m_N=0$, and
\begin{align*}
v_{\mathrm{st}}=(e_1^T,e_{m_{n-1}+1},\dots,e_{m_0+1})\fullstop
\end{align*}
\end{enumerate}
\begin{proof}
The result in (a) is obtained by locally solving
\begin{align*}
\dot u=-2\alpha u\fullstop
\end{align*}
For a basis of solutions $(u_i)_{i=1}^m$ write $S^{-1}=(u_1,\dots,u_m)$ and observe that $S$ transforms $(\alpha,\beta)$ to $(0,\beta_0)$. For part (b) start by assuming that $\res_{\lambda_1}(\alpha)=-x_1$, $\res_{\lambda_1}(\beta)=y_1^T$, (ii) in the definition of a $k$-Nahm complex holds for $g=\idmat$ and $v=(e_m^T,{v_\mathrm{st}}_2,\dots,{v_\mathrm{st}}_N)$, as this can be achieved by a gauge transformation constant near the ends of $\lambda$. Let $u_{i}$ be the unique solutions from the following lemma.

\begingroup
\setcounter{tmpa}{\value{theorem}}
\setcounter{theorem}{0}
\renewcommand\thetheorem{\ref{parallel transport solutions}}
\begin{lemma}
Let $((\alpha,\beta),(e_m^T,{v_{\mathrm{st}}}_2,\dots,{v_\mathrm{st}}_N))$ be a $k$-Nahm complex with $\res_{\lambda_1}(\alpha)=-x_1$ and $\res_{\lambda_1}(\beta)=y_1^T$ and that satisfies (ii) in the definition of a $k$-Nahm complex for $g=\idmat$. Then there exist unique smooth solutions on $\lambda$ to the following problems:
\begingroup
\setcounter{tmpb}{\value{equation}}
\setcounter{equation}{0}
\setcounter{tmpc}{1}
\renewcommand{\theequation}{\Alph{tmpc}.\arabic{equation}}
\begin{align}
&\begin{dcases}\dot{u}_1=-2\alpha u_1\\\lim_{z\searrow0}z^{-\frac{m-1}{2}}u_1(\lambda_1+z)=e_1\end{dcases}\\
&\begin{dcases}\dot{u}_{i+1}=-2\alpha u_{i+1}\\\lim_{z\nearrow0}z^{-\frac{k_{N-i}-1}{2}}u_{i+1}(\lambda_N+z)={v_\mathrm{st}}_{i+1}\\
\forall j<N-i:\forall\ell\in\{m_{N-i-j}+1,\dots,m_{n-i-j}\}:\lim_{z\nearrow0}z^{-\frac{k_{N-j}-1}{2}}(u_{i+1})_\ell(\lambda_N+z)=0\end{dcases}
%&\begin{dcases}\dot u_{11}=-2\alpha u_{11}\\\lim_{z\searrow0}z^{-\frac{k_1-1}{2}}u_{11}(\lambda_1+z)={v_\mathrm{st}}_{11}\end{dcases}\comma\\
%&\begin{dcases}\dot u_{12}=-2\alpha u_{12}\\\lim_{z\searrow 0}z^{-\frac{k_2-1}{2}}u_{12}(\lambda_1+z)={v_\mathrm{st}}_{12}\\\forall i\in\{1,\dots,k_1\}:\lim_{z\searrow 0}z^{-\frac{k_1-1}{2}}(u_{12})_i(\lambda_1+z)=0\end{dcases}\comma\\
%&\begin{dcases}\dot u_2=2u_2\alpha\\\lim_{z\nearrow0}z^{-\frac{m-1}{2}}u_2(\lambda_2+z)={v_\mathrm{st}}_2\end{dcases}\fullstop
\end{align}
\endgroup
\setcounter{equation}{\thetmpb}
\end{lemma}
\endgroup
\setcounter{theorem}{\thetmpa}
\noindent
This may be viewed as parallel transporting the vectors $e_1$ and ${v_\mathrm{st}}_{i+1}$ via the connection corresponding to $\alpha$ from one end to the interior of the interval. This allows to relate $v_{i+1}$ and $v_1$ by transporting to the same point. This will also be crucial later when defining the corresponding rational map.

Now, take a smooth $\GL{m}{\C}$-valued function $\tilde S^{-1}$ on $\lambda$ such that near $\lambda_1$
\begin{align*}
\tilde S^{-1}=\left(\begin{matrix}u_1&\dots&\beta^{m-1}u_1\end{matrix}\right)
\end{align*}
and near $\lambda_N$
\begin{align*}
\tilde S^{-1}=\left(\begin{matrix}u_{N}&\dots&\beta^{k_n-1}u_{N}&\dots&u_{2}&\dots&\beta^{k_1-1}u_{2}\end{matrix}\right)\fullstop
\end{align*}
Then $\tilde S$ transforms $(\alpha,\beta)$ to $(0,\beta_0)$ near each end of the interval for some $\beta_0$ constant. Setting $S^{-1}(\lambda_i+z)=\tilde S^{-1}(\lambda_i+z)z^{(-1)^{\delta_{1i}}2x_i}$ then defines an element $S$ of $\mathcal{G}$ with $S(\lambda_i)=\idmat$ that takes $(\alpha,\beta)$ near the ends of $\lambda$ to the form
\begin{align*}
S.\alpha(\lambda_i+z)=(-1)^{\delta_{1i}}\frac{x_i}{z}\comma&&S.\beta(\lambda_1+z)=\left(\begin{matrix}
0&&&-z^{m-1}q_1\\
z^{-1}&\ddots&&\vdots\\
&\ddots&0&-zq_{m-1}\\
&&z^{-1}&-q_m
\end{matrix}\right)\comma
\end{align*}
and $\beta$ of the desired form near $\lambda_N$ up to ckecking the vanishing of the coefficients of negative powers on the off-diagonal blocks, as $\tilde S\beta\tilde S^{-1}$ is the matrix of $\beta$ with resprect to the basis $(u_1,\dots,\beta^{m-1}u_1)$ near $\lambda_1$ and with respect to the basis $(u_{2},...,\beta^{k_n-1}u_{n})$ near $\lambda_N$. Lemma \ref{conjugation to normal form} then implies, after showing the boundedness of the off-diagonal blocks of $\beta$ near $\lambda_N$, that $g_\mathrm{st}^{-1}S$, with $g_\mathrm{st}\equiv\idmat$ near $\lambda_N$, transforms $(\alpha,\beta)$ to the desired form.

Note that by having identical boundary conditions, $\tilde S^{-1}$ agrees near $\lambda_N$ with the fundamental solutions obtained in the proof of lemma \ref{parallel transport solutions}, so that $S(\lambda_N+z)=\idmat+z^{1/2}O(1)$. It then follows from (\ref{problem for u12}) that
\begin{align*}
\forall j<N-i:\forall\ell\in\{m_{N-i-j}+1,\dots,m_{n-i-j}\}:{}&S_{\ell(m_{N+1-i}+1)}^{-1}(\lambda_N+z)\\
&\quad=z^{-\frac{k_{N-i}-1}{2}}(u_{i+1})_\ell(\lambda_N+z)\\
&\quad=z^{\frac{k_{N-j}-k_{N-i}+1}{2}}O(1)\fullstop
\end{align*}
Hence,
\begin{align*}
S(\lambda_1+z)&=\idmat+\left(\begin{matrix}z^\frac{k_{n}-k_n+1}{2}O(1)&\dots&z^{1/2}O(1)\\
\vdots&\ddots&\vdots\\
z^\frac{k_1-k_n+1}{2}O(1)&\dots&z^\frac{k_1-k_1+1}{2}O(1)\end{matrix}\right)\comma\\
S^{-1}(\lambda_1+z)&=\idmat+\left(\begin{matrix}z^\frac{k_{n}-k_n+1}{2}O(1)&\dots&z^{1/2}O(1)\\
\vdots&\ddots&\vdots\\
z^\frac{k_1-k_n+1}{2}O(1)&\dots&z^\frac{k_1-k_1+1}{2}O(1)\end{matrix}\right)\comma
\end{align*}
so that the lower $(i,j)$-block of $\beta_\mathrm{st}(\lambda_N+z)$ vanishes of order $\frac{\abs{k_{N-i}-k_{N-j}}}{2}$ and, trivially, so does the upper $(j,i)$-block.
% An analogous argument for the case where $m$ is odd concludes the proof.
\end{proof}
\end{proposition}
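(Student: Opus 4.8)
The plan is to dispatch part (a) by a direct integration of a linear ODE and then to bootstrap the same idea, globalised through Lemma \ref{parallel transport solutions}, into part (b). For (a), observe that the gauge action $g.\alpha=g\alpha g^{-1}-\tfrac12\dot gg^{-1}$ sends $\alpha$ to $0$ exactly when $g=S$ with $S^{-1}=(u_1,\dots,u_m)$ a fundamental matrix for $\dot u=-2\alpha u$; such a system exists locally, away from the poles, by the existence theory for linear ODE. The one computation I would record here, because it is reused constantly, is that $\beta u$ solves the same equation whenever $u$ does: differentiating and substituting the complex equation $\dot\beta=-2[\alpha,\beta]$ gives $\tfrac{d}{dt}(\beta u)=-2\alpha(\beta u)$. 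Since the gauge action preserves the complex equation, the transformed pair $(0,\beta_0)$ satisfies $\dot\beta_0=0$, so $\beta_0$ is constant, which is (a).

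For (b) I would first normalise. The residue hypotheses (i) and the block datum (ii) are stated only up to $\GL{m}{\C}$-conjugacy, so a gauge transformation that is constant near each end arranges $\res_{\lambda_1}(\alpha)=-x_1$, $\res_{\lambda_1}(\beta)=y_1^T$, condition (ii) with $g=\idmat$, and $v=(e_m^T,{v_{\mathrm{st}}}_2,\dots,{v_{\mathrm{st}}}_N)$; here the appearing standard basis vectors are precisely the extremal eigenvectors singled out by (iii) and (iv). With this in place I would invoke Lemma \ref{parallel transport solutions} to obtain the global solutions $u_1,u_{i+1}$ of $\dot u=-2\alpha u$ with the prescribed leading behaviour at the ends, and build a smooth invertible frame $\tilde S^{-1}$ equal near $\lambda_1$ to the $\beta$-cyclic basis $(u_1,\beta u_1,\dots,\beta^{m-1}u_1)$ and near $\lambda_N$ to the block basis $(u_N,\dots,\beta^{k_n-1}u_N,\dots,u_2,\dots,\beta^{k_1-1}u_2)$. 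These are genuine bases by the cyclicity in (iii) and the nonvanishing determinant in (iv). By the observation from (a) every column again solves $\dot v=-2\alpha v$, so $\tilde S$ gauges $\alpha$ to $0$ near both ends while conjugating $\beta$ to the constant companion matrix determined by each cyclic frame.

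The frame $\tilde S$ has the wrong pole structure for $\alpha$, so I would restore the first order poles by the diagonal correction $S^{-1}(\lambda_i+z)=\tilde S^{-1}(\lambda_i+z)\,z^{(-1)^{\delta_{1i}}2x_i}$. A short computation of the gauge action of the diagonal factor $z^{(-1)^{\delta_{1i}}2x_i}$ reinserts the pole $(-1)^{\delta_{1i}}x_i/z$ into $\alpha$ and conjugates the constant companion matrix into the displayed shape of $\beta$, with $z^{-1}$ on the subdiagonal and the cyclic coefficients $q$ in the last column near $\lambda_1$. This yields an $S\in\mathcal{G}$ with $S(\lambda_i)=\idmat$ realising $\alpha$ and the diagonal blocks of $\beta$ in normal form; since the two frames begin with the transported vectors $u_1$ and ${v_{\mathrm{st}}}_{i+1}$, tracking the action $v\mapsto(v_1g^{-1},gv_2,\dots,gv_N)$ carries $v$ to $v_{\mathrm{st}}$, a verification I regard as routine bookkeeping.

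The hard part, and the one place the half powers genuinely interfere, is the off-diagonal blocks of $\beta_{\mathrm{st}}$ near $\lambda_N$: the conjugation by $z^{2x_N}$ could a priori manufacture negative powers, whereas the claim is that the $(i,j)$-block vanishes to order $\tfrac{\abs{k_{N-i}-k_{N-j}}}{2}$. I would extract this from the fine asymptotics of $S$. Because $\tilde S^{-1}$ reproduces near $\lambda_N$ exactly the boundary data of the fundamental solutions built in Lemma \ref{parallel transport solutions}, uniqueness forces $S(\lambda_N+z)=\idmat+z^{1/2}O(1)$; feeding in the additional decay conditions of that lemma (the vanishing of the lower components of $u_{i+1}$, as in (\ref{problem for u12})) sharpens this to entrywise bounds $S^{-1}_{\ell(m_{N+1-i}+1)}(\lambda_N+z)=z^{(k_{N-j}-k_{N-i}+1)/2}O(1)$, which produce the asserted vanishing orders of the lower off-diagonal blocks and, trivially, of the upper ones, hence their boundedness. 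Once boundedness is in hand, Lemma \ref{conjugation to normal form} applies and shows that $g_{\mathrm{st}}^{-1}S$, with $g_{\mathrm{st}}$ equal to the identity near $\lambda_N$ and to the explicit unipotent matrix near $\lambda_1$, conjugates the complex into the stated normal form. I expect the interlocking of the block indices $m_i$ with the half-integer exponents to be the most delicate accounting, but the decay conditions of Lemma \ref{parallel transport solutions} are engineered precisely to control it.
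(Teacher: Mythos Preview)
Your proposal is correct and follows essentially the same route as the paper: part (a) via a local fundamental matrix for $\dot u=-2\alpha u$, and part (b) by first normalising the residues and $v$, invoking Lemma \ref{parallel transport solutions} to build the cyclic frames $\tilde S^{-1}$ near each end, correcting with the diagonal factor $z^{(-1)^{\delta_{1i}}2x_i}$ to reinstate the poles, and then reading off the off-diagonal decay from the fine asymptotics of $S$ supplied by (\ref{problem for u12}) before applying Lemma \ref{conjugation to normal form}. Your added remarks that $\beta u$ again solves $\dot u=-2\alpha u$ via the complex equation, and that invertibility of the frames comes from the cyclicity and determinant conditions in (iii)--(iv), make explicit points the paper leaves implicit but do not alter the argument.
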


\section{Solving the real equation}\label{solving the real equation}

It is shown in this section that every equivalence class of $k$-Nahm complexes contains a unique equivalence class of real $k$-Nahm complexes. For $\varepsilon>0$ small enough set $\lambda(\varepsilon)=(\lambda_1+\varepsilon,\lambda_N-\varepsilon)$ and fix a solution $(\alpha,\beta)$ to the complex equation on $\overline{\lambda(\varepsilon)}$. As observed in \cite{donaldson1984}, the equation $F(g.(\alpha,\beta))=0$ on $\lambda(\varepsilon)$ then is the Euler-Lagrange equation of the functional
\begin{align*}
\mathcal{L}:g\mapsto\int_{\lambda(\varepsilon)}\left(\abs{\alpha'+\alpha'^*}^2+2\abs{\beta'}^2\right)\dv s\comma
\end{align*}
$(\alpha',\beta')=g.(\alpha,\beta)$, on smooth $\GL{m}{\C}$-valued functions on $\lambda(\varepsilon)$. Observing that the integrand is invariant under $\U{m}$-valued functions leads to considering functions with values in $\mathrm{H}(m)=\GL{m}{\C}/\U{m}$, the space of positive hermitian matrices. For $g:\lambda(\varepsilon)\to\GL{m}{\C}$ set $h(g)=g^*g:\lambda(\varepsilon)\to\mathrm{H}(m)$.

Using that $(\alpha,\beta)$ can be gauged to $(0,\beta_0)$ on $\lambda(\varepsilon)$, Donaldson obtained the following existence and uniqueness result.

\begin{proposition}[{\cite[pp. 395-397]{donaldson1984}}]\label{ex and un on eps}
For any $h_+,h_-\in\mathrm{H}(m)$ there exists a continuous $g:\overline{\lambda(\varepsilon)}\to\GL{m}{\C}$, smooth on $\lambda(\varepsilon)$, with $h(g(\lambda_1+\varepsilon))=h_-$ and $h(g(\lambda_N-\varepsilon))=h_+$ such that $g.(0,\beta_0)$ solves the real equation over $\lambda(\varepsilon)$. Moreover, if $g_1$ and $g_2$ both satisfy these conditions over $\lambda(\varepsilon)$ (over $\lambda$), then $h(g_1)=h(g_2)$ over $\lambda(\varepsilon)$ (over $\lambda$), so the solution is unique up to unitary gauge transformations.
\end{proposition}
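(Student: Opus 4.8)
The plan is to follow Donaldson and turn the equation $F(g.(\alpha,\beta))=0$ into a convex variational problem on the symmetric space $\mathrm{H}(m)=\GL{m}{\C}/\U{m}$ of positive Hermitian matrices. After replacing $(\alpha,\beta)$ by its gauge equivalent $(0,\beta_0)$ on $\overline{\lambda(\varepsilon)}$ (Proposition~\ref{normal form}(a) applies, as $\varepsilon>0$ keeps us away from the poles), one has $\alpha'=-\tfrac12\dot gg^{-1}$ and $\beta'=g\beta_0g^{-1}$ for $(\alpha',\beta')=g.(0,\beta_0)$, and a short computation gives $\dot gg^{-1}+(g^*)^{-1}\dot g^*=g(h^{-1}\dot h)g^{-1}$ with $h=g^*g$, so that
\begin{align*}
\mathcal{L}(g)=\int_{\lambda(\varepsilon)}\left(\tfrac14\tr\!\bigl((h^{-1}\dot h)^2\bigr)+2\tr\!\bigl(h\beta_0h^{-1}\beta_0^*\bigr)\right)\dv s=:\mathcal{E}(h)\fullstop
\end{align*}
The first term is a quarter of the Riemannian energy of the path $s\mapsto h(s)$ in $\mathrm{H}(m)$, the second is manifestly nonnegative, and the real equation is the Euler--Lagrange equation of $\mathcal{E}$; prescribing $h_\pm$ prescribes the endpoints of the path.

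\emph{Existence.} I would apply the direct method to $\mathcal{E}$ on $W^{1,2}$ paths $\overline{\lambda(\varepsilon)}\to\mathrm{H}(m)$ with endpoints $h_-,h_+$. An energy bound bounds the length of the path, hence bounds $d_{\mathrm{H}(m)}(h(s),h_-)$, so a minimizing sequence stays in a fixed compact geodesic ball; there $h$ and $h^{-1}$ are uniformly bounded, which both turns the energy bound into a genuine (Euclidean) $W^{1,2}$ bound and, crucially, prevents degeneration towards the ideal boundary of $\mathrm{H}(m)$. Convexity of the integrand in $\dot h$ gives weak lower semicontinuity, so a minimizer $h$ exists; the compact embedding $W^{1,2}\hookrightarrow C^0$ gives it the prescribed boundary values. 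Because $\varepsilon>0$ the Euler--Lagrange equation is a nondegenerate second-order ODE system on $\lambda(\varepsilon)$, so $h\in C^\infty(\lambda(\varepsilon))$ by bootstrapping and extends continuously to $\overline{\lambda(\varepsilon)}$. Taking $g=h^{1/2}$ finishes this part.

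\emph{Uniqueness.} Given two solutions $g_1,g_2$ over $\lambda(\varepsilon)$ (resp.\ over $\lambda$), set $h_i=h(g_i)$ and let $h_t$, $t\in[0,1]$, be obtained by joining $h_1$ and $h_2$ pointwise in $s$ by the unique geodesic of $\mathrm{H}(m)$. Since $h_1=h_2$ at the endpoints, $h_t$ has the same endpoints for every $t$, hence is admissible. On the Hadamard manifold $\mathrm{H}(m)$ the energy $t\mapsto\tfrac14\int\tr((h_t^{-1}\dot h_t)^2)\dv s$ is convex, and $h\mapsto\tr(h\beta_0h^{-1}\beta_0^*)$ is convex along geodesics (its second variation along $h_t=h_1^{1/2}e^{tX}h_1^{1/2}$ is $\sum_{i,j}(\mu_i-\mu_j)^2e^{t(\mu_i-\mu_j)}|A_{ij}|^2\ge0$ in the eigenbasis of $X$, with $A=h_1^{1/2}\beta_0h_1^{-1/2}$), so $t\mapsto\mathcal{E}(h_t)$ is convex. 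As $h_1$ and $h_2$ are both minimizers it is affine; but the second variation of the energy term dominates $\int|\nabla_s\partial_th_t|^2\dv s$ and $\partial_th_t$ vanishes at the common endpoints, so its vanishing forces $\partial_th_t$ to be parallel along each $s$-curve and hence identically zero. Thus $h_1=h_2$ and $g_2=ug_1$ with $u$ unitary. The argument over $\lambda$ is identical once one uses that the two solutions share the same residues, so the geodesic interpolation is again admissible near the ends.

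\emph{Main obstacle.} The only real work is the compactness step in the existence proof: showing that a minimizing sequence cannot run off to $\partial\mathrm{H}(m)$. This uses more than convexity of the functional — it uses that the kinetic term is the energy of a path in a complete, non-positively curved space, so that an energy bound is a metric length bound. Everything concerning boundary behaviour is harmless here precisely because $\varepsilon>0$; the genuine difficulty of the singular endpoints is deferred to the $\varepsilon\to0$ analysis later in the section.
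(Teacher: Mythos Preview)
Your existence argument via the direct method is correct and is exactly what the paper says Donaldson does. Your uniqueness argument on $\lambda(\varepsilon)$ is also correct but proceeds differently from the paper: you exploit geodesic convexity of $\mathcal{E}$ on the Hadamard space $\mathrm{H}(m)$ and argue directly from the second variation, whereas the paper (following Donaldson) singles out the scalar function $\Phi(h)=\log(\max_i\lambda_i)$ and uses the weak differential inequality
\[
\frac{\dv^2}{\dv t^2}\Phi\bigl(h(g)\bigr)\ \geq\ -2\bigl(\lvert F(\alpha,\beta)\rvert-\lvert F(g.(\alpha,\beta))\rvert\bigr)
\]
as a maximum principle. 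These are closely related in spirit---convexity of $\Phi$ along geodesics is a shadow of the non-positive curvature you are using---but the differential inequality is applied to $h(g_2g_1^{-1})$ directly, with no reference to the value of $\mathcal{E}$.

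This difference matters for the parenthetical ``over $\lambda$'' case, where your sketch has a gap. On $\lambda$ the pair $(\alpha,\beta)$ has first-order poles, so $\int_\lambda\lvert\beta'\rvert^2\,\dv s=\infty$ and the functional $\mathcal{E}$ is not defined on any path, including the geodesic interpolation $h_t$; the convexity argument therefore does not get off the ground. Saying that ``the two solutions share the same residues'' does not repair this: it makes $h_t$ continuous up to the endpoints, but $\mathcal{E}(h_t)$ is still infinite. Donaldson's inequality, by contrast, involves only $F(\alpha,\beta)$ and $F(g.(\alpha,\beta))$, both of which vanish, so $\Phi(h(g_2g_1^{-1}))$ and $\Phi(h(g_2g_1^{-1})^{-1})$ are convex on all of $\lambda$; since $h(g_2g_1^{-1})\to\idmat$ at the ends, both are $\leq0$, forcing $h(g_2g_1^{-1})\equiv\idmat$. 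That is what buys the extension to the singular interval.
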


While the existence is deduced using the direct method of calculus of variations, the key ingredient for uniqueness is the following weak differential inequality for the eigenvalues of $h$. For $h\in\mathrm{H}(m)$ with eigenvalues $\lambda_i$ define $\Phi(h)=\log(\max_i(\lambda_i))$.

\begin{lemma}[{\cite[p. 396]{donaldson1984}}]
If $g$ is a smooth $\GL{m}{\C}$-valued function on some interval in $\lambda$, then
\begin{align*}
\frac{\dv^2}{\dv t^2}\Phi(h(g))\geq-2\left(\abs{F(\alpha,\beta)}-\abs{F(g.(\alpha,\beta))}\right)\comma\\
\frac{\dv^2}{\dv t^2}\Phi(h(g)^{-1})\geq-2\left(\abs{F(\alpha,\beta)}-\abs{F(g.(\alpha,\beta))}\right)\comma
\end{align*}
in the weak sense.
\end{lemma}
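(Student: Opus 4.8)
\noindent\emph{Proof plan.} The conclusion is a weak (distributional) inequality, but the plan is to reduce it to a classical pointwise one by a barrier argument, after which the weak statement follows at no extra cost. Recall that, for continuous $f$, a continuous function $\phi$ satisfies $\phi''\geq f$ weakly provided every $C^2$ function $\psi$ touching $\phi$ from above at a point $t_0$ has $\psi''(t_0)\geq f(t_0)$. So fix such a $\psi$ for $\phi=\Phi(h(g))$ and let $v$ be a unit eigenvector of $h(g)(t_0)=g(t_0)^*g(t_0)$ for its largest eigenvalue. Since $\langle h(g)(t)v,v\rangle\leq\lambda_{\max}(h(g)(t))$ for all $t$, with equality at $t_0$, the smooth function $t\mapsto\log\langle h(g)(t)v,v\rangle=\log\abs{g(t)v}^2$ is also touched from above by $\psi$ at $t_0$; hence $\psi''(t_0)\geq\tfrac{\dv^2}{\dv t^2}\log\abs{gv}^2$ at $t_0$. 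It therefore suffices to prove, for a unit vector $v$ that is a top eigenvector of $h(g)(t_0)$, the pointwise inequality
\[
\frac{\dv^2}{\dv t^2}\log\abs{gv}^2\ \geq\ -2\bigl(\abs{F(\alpha,\beta)}-\abs{F(g.(\alpha,\beta))}\bigr)\qquad\text{at }t_0 .
\]
The inequality for $\Phi(h(g)^{-1})$ is obtained identically, with $v$ now a \emph{bottom} eigenvector and $\log\abs{gv}^2$ replaced by $\log\langle h(g)^{-1}v,v\rangle=\log\abs{g^{-*}v}^2$.

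For the pointwise estimate set $w=gv$, $\mu=\dot gg^{-1}$ (so $\dot w=\mu w$) and $R=\mu+\mu^*$; note that at $t_0$, from $g(t_0)^*g(t_0)v=\abs w^2 v$, one gets the eigenvector relation $g(t_0)^*w=\abs w^2\,g(t_0)^{-1}w$. Two applications of the product rule give
\[
\frac{\dv}{\dv t}\abs w^2=\langle Rw,w\rangle,\qquad\frac{\dv^2}{\dv t^2}\abs w^2=\langle\dot Rw,w\rangle+\abs{Rw}^2-\langle[\mu,\mu^*]w,w\rangle ,
\]
and since $\langle Rw,w\rangle^2\leq\abs{Rw}^2\abs w^2$ by Cauchy--Schwarz, forming $\bigl(\tfrac{\dv^2}{\dv t^2}\abs w^2\bigr)\abs w^2-\bigl(\tfrac{\dv}{\dv t}\abs w^2\bigr)^2$ and dividing by $\abs w^4$ yields
\[
\frac{\dv^2}{\dv t^2}\log\abs w^2\ \geq\ \frac{\langle\dot Rw,w\rangle-\langle[\mu,\mu^*]w,w\rangle}{\abs w^2}.
\]
It remains to understand the numerator. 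Writing $\alpha'=g\alpha g^{-1}-\tfrac12\mu$ for the first component of $g.(\alpha,\beta)$, one has $R=2\bigl(g\alpha g^{-1}+(g\alpha g^{-1})^*\bigr)-2(\alpha'+\alpha'^*)$; differentiating this and substituting the defining identities
\[
F(\alpha,\beta)=(\alpha+\alpha^*)\dot{\hphantom{)}}+2\bigl([\alpha,\alpha^*]+[\beta,\beta^*]\bigr),\qquad F(g.(\alpha,\beta))=(\alpha'+\alpha'^*)\dot{\hphantom{)}}+2\bigl([\alpha',\alpha'^*]+[\beta',\beta'^*]\bigr)
\]
for \emph{every} first derivative that occurs, one rewrites $\langle\dot Rw,w\rangle$ as a sum of: a term which is the $g$--conjugate of $F(\alpha,\beta)$ paired with $w$; the term $-2\langle F(g.(\alpha,\beta))w,w\rangle$; a collection of commutator contributions built from $[\mu,g\alpha g^{-1}]$, $[\alpha,\alpha^*]$, $[\beta,\beta^*]$, $[\alpha',\alpha'^*]$, $[\beta',\beta'^*]$; and one Hermitian term $\Xi$ assembled from the \emph{anti}-Hermitian part of $\dot\alpha$, the only piece not constrained by the background equations. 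The crucial observation is that $\Xi$ annihilates $w$: the eigenvector relation $g(t_0)^*w=\abs w^2 g(t_0)^{-1}w$ forces $\langle\Xi w,w\rangle=0$ at $t_0$. Together with $-\langle[\mu,\mu^*]w,w\rangle$, the remaining commutator contributions collect into manifest squares $\abs{\,\cdot\,w}^2$, and estimating the two defect terms then leaves exactly $-2(\abs{F(\alpha,\beta)}-\abs{F(g.(\alpha,\beta))})\abs w^2$. The $h(g)^{-1}$ case runs the same way with $\mu$ replaced by $-\mu^*$ (note $h(g)^{-1}=h(g^{-*})$) and $w$ the corresponding bottom eigenvector.

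The conceptual skeleton is short; the \emph{main obstacle} is precisely the bookkeeping just described --- choosing the order of substitutions so that, once all commutators are expanded, only the two defects $F(\alpha,\beta)$ and $F(g.(\alpha,\beta))$ survive with the correct signs, while everything else is either a manifest square or is annihilated by $w$ through the eigenvector relation at $t_0$. Keeping track of which quantities are conjugated by $g$ and which by $g^{-*}$, and of the splitting of $\dot\alpha$ into Hermitian and anti-Hermitian parts, is where the care lies. The barrier reduction of the first paragraph is what makes the ``weak sense'' cost nothing; and since the inequality concerns only a subinterval of $\lambda$ bounded away from $\lambda_1$ and $\lambda_N$, $g$ and all the quantities above are smooth there, so no boundary behaviour enters here --- that is handled separately later in this section.
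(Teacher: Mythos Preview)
The paper does not give its own proof of this lemma; it is quoted verbatim as a result of Donaldson and used as a black box. Your sketch is therefore not competing against anything in the paper, and in fact it follows the standard route that Donaldson's original argument takes: a barrier/viscosity reduction of the weak inequality for $\Phi(h(g))$ to a classical inequality for the smooth minorant $t\mapsto\log\lvert g(t)v\rvert^{2}$ with $v$ a top eigenvector at the touching point, followed by a direct second--derivative computation in which the defects $F(\alpha,\beta)$ and $F(g.(\alpha,\beta))$ are substituted for the time derivatives of the Hermitian parts.

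Your ``crucial observation'' is correct and is really the heart of the matter, so it is worth recording why: writing $w=gv$, $h=g^{*}g$, one has for \emph{any} $A$
\[
\langle gAg^{-1}w,w\rangle=v^{*}hAv,\qquad \langle g^{-*}Ag^{*}w,w\rangle=v^{*}Ahv,
\]
and at $t_{0}$ the eigenvector relation $hv=\lvert w\rvert^{2}v$ makes these equal. Thus the ``mixed'' combination $gAg^{-1}-g^{-*}Ag^{*}$ contributes nothing at $t_{0}$, which is exactly what eliminates the uncontrolled piece $\dot\alpha-\dot\alpha^{*}$; and the ``symmetric'' combination reduces to $2\lvert w\rvert^{2}\,v^{*}Av$, which is how the conjugated $F(\alpha,\beta)$ term enters with the right weight. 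The only part of your write-up that is genuinely incomplete is the last step, where you assert that the surviving commutators ``collect into manifest squares''; this is true, but it is precisely the tedious identity Donaldson checks, and your text does not actually exhibit the regrouping. If you want the argument to stand on its own, that algebra should be written out (or at least the final nonnegative expression displayed); otherwise a citation to \cite[p.~396]{donaldson1984}, as the paper does, is the honest alternative.
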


To find an analogue of the first part of proposition \ref{ex and un on eps} for the interval $\lambda$, one brings a given Nahm complex into a ``nice" form, in order to make more use of the above differential inequality. At this point, a difficulty arises in the case $k_1-k_2=1$ that is not present in Donaldson's or Hurtubise's work. The following lemma addresses that issue before formulating an analogue of the result found in \cite{donaldson1984}.

\begingroup
\setcounter{tmpa}{\value{theorem}}
\setcounter{theorem}{0}
\renewcommand\thetheorem{\ref{power -1/2}}
\begin{lemma}
If $((\alpha,\beta),v)$ is a $k$-Nahm complex, then there exists an equivalent $k$-Nahm complex $((\alpha',\beta'),v')$ such that the power $-\frac{1}{2}$-term in the expansion of $F(\alpha',\beta')$ near $\lambda_N$ vanishes.
\end{lemma}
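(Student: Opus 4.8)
The plan is to make the complex fully explicit near $\lambda_N$ via Proposition~\ref{normal form}, read off the bad coefficient of $F$, and cancel it by an explicit complex gauge transformation localized near $\lambda_N$. First I would invoke Proposition~\ref{normal form}(b) to assume $((\alpha,\beta),v)$ is in normal form, so that near $\lambda_N$ one has $\alpha(\lambda_N+z)=x_N/z$ and $\beta(\lambda_N+z)=y_N/z+B_0+B_{1/2}\,z^{1/2}+O(z)$, where, by the block structure of the normal form, $B_{1/2}$ is supported precisely in the off-diagonal blocks $(i,j)$ with $\abs{k_{N-i}-k_{N-j}}=1$ and has essentially a single entry in each such block. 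As $x_N$ is real and diagonal, $\alpha+\alpha^*=2x_N/z$ and $[\alpha,\alpha^*]=0$, whence $F(\alpha,\beta)=-2x_N/z^2+2[\beta,\beta^*]$; expanding the commutator, the unique term carrying a negative, non-integer power of $z$ is the $z^{-1/2}$-term, arising from the cross product of the residue $y_N/z$ with $B_{1/2}\,z^{1/2}$. Its coefficient $\Gamma$ is an explicit single-commutator expression in $y_N,y_N^*,B_{1/2},B_{1/2}^*$ --- obtained by applying to $B_{1/2}$ an operator of the form $X\mapsto[X,y_N^*]+[X^*,y_N]$ --- which is skew-hermitian and supported in the same difference-one blocks. (For instance it vanishes identically when $n=2$ and $k=(2,1)$.)

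To remove $\Gamma$ I would proceed one difference-one pair of blocks at a time. Fixing such a pair $(i,j)$ and a constant matrix $A$ supported in the $(i,j)$-block, so that $A^2=0$, set $g\equiv\idmat$ near $\lambda_1$ and $g(\lambda_N+z)=\idmat+z^{3/2}A$ near $\lambda_N$, smoothly interpolated in between. Then $g\in\mathcal G$ with $g(\lambda_1)=g(\lambda_N)=\idmat$, so the vectors $v_i$ and all residues are unchanged; moreover conjugation by $g$ leaves the coefficients $x_N/z$ of $\alpha$ and $y_N/z$ of $\beta$ untouched and alters only their $z^{1/2}$-coefficients, by $[A,x_N]-\tfrac34A$ and $[A,y_N]$ respectively, each again lying in the $(i,j)$-block; hence $g.((\alpha,\beta),v)$ is once more a $k$-Nahm complex. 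A short computation --- using $\partial_t z^{1/2}=\tfrac12 z^{-1/2}$ together with the same cross-term bookkeeping --- shows that under $g$ the coefficient $\Gamma$ is replaced by $\Gamma+\mathcal D(A)$, where $\mathcal D$ is an explicit $\R$-linear map, assembled from $\ad_{x_N}$, $\ad_{y_N}$, $\ad_{y_N^*}$ and the scalar $\tfrac34$, from the $(i,j)$-block space into the skew-hermitian matrices supported in the $(i,j)$ and $(j,i)$ blocks.

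It then remains to solve $\mathcal D(A)=-\Gamma$ for $A$; since both $\Gamma$ and $\mathcal D$ respect the block decomposition, this is one finite linear system per difference-one pair, and I expect verifying its solvability to be the main obstacle. The decisive point is that $\mathcal D(A)$ is built from $[A,y_N]$ (plus the $\alpha$-correction) by an operator of exactly the same shape $X\mapsto[X,y_N^*]+[X^*,y_N]$ that produced $\Gamma$ from $B_{1/2}$, while $B_{1/2}$ is, by the normal form, essentially a single corner entry; so solvability reduces to a finite statement about $\ad_{x_N}$- and $\ad_{y_N}$-orbits inside one pair of $\su{2}$-blocks, to be checked by hand --- e.g.\ for $n=2$ and $k=(3,2)$ one finds that $A$ equal to a suitable multiple of $e_1e_m^{T}$ works --- if necessary composing a finite number of such transformations per pair. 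Composing the resulting transformations over all difference-one pairs yields the required equivalent $k$-Nahm complex in which the $z^{-1/2}$-term of $F$ vanishes.
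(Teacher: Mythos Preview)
Your outline is the paper's approach: pass to normal form, gauge near $\lambda_N$ by $g=\idmat+z^{3/2}A$ with $A$ constant and supported in a difference-one block pair, and reduce the vanishing of the $z^{-1/2}$-coefficient of $F$ to a linear system $\mathcal{D}(A)=-\Gamma$, handled pair by pair. The gap is exactly where you write that solvability is ``to be checked by hand'': this is the whole content of the proof, and it is \emph{not} a finite check, because the block sizes are unbounded. The paper places its parameters along specific off-diagonals --- constants $\mu_1,\dots,\mu_{\ell}$ in one block and $\nu_1,\dots,\nu_{\ell}$ in the other, $\ell$ being one less than the smaller block dimension --- and after a lengthy explicit computation the condition $F_{-1/2}=0$ becomes two decoupled tridiagonal systems with coefficient matrix
\[
M_\ell\bigl(\tfrac{7}{2}\bigr)=\begin{pmatrix}\tfrac{7}{2}&-2&&\\-2&\tfrac{11}{2}&\ddots&\\&\ddots&\ddots&-2\\&&-2&\tfrac{7}{2}\end{pmatrix}\fullstop
\]
Invertibility for every $\ell$ is then established via the recursion $\det M_\ell(c)=c^{2}\det M_{\ell-2}\bigl(\tfrac{11}{2}-\tfrac{4}{c}\bigr)$ together with the observation that the iterates $c_1=\tfrac{7}{2}$, $c_{i+1}=\tfrac{11}{2}-\tfrac{4}{c_i}$ remain $\geq\tfrac{7}{2}$, forcing $\det M_\ell>0$. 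This uniform determinant argument is the heart of the lemma and cannot be replaced by a case check or an appeal to generic $\ad$-orbit structure.

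A smaller correction: your description of $B_{1/2}$ as ``essentially a single corner entry'' and the test $A=c\,e_1e_m^{T}$ for $k=(3,2)$ are too optimistic. In normal form $B_{1/2}$ already carries one nonzero entry in \emph{each} of the two off-diagonal blocks (the constants $C_{k_22}$ and $C_{(m-1)1}$ in the paper's notation), so $\Gamma$ has two independent sources and one needs parameters in both blocks; moreover the entries the paper actually uses sit on interior off-diagonals (for $k=(3,2)$ at $E_{m(k_2-1)}$ and $E_{k_2(m-2)}$), not at the corner $(1,m)$.
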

\endgroup
\setcounter{theorem}{\thetmpa}

\begin{lemma}\label{nice nahm complex}
For every $k$-Nahm complex $((\alpha,\beta),v)$ there is an equivalent ``nice" $k$-Nahm complex $((\alpha',\beta'),v')$ such that
\begin{enumerate}[label=(\roman*), itemsep=0mm]
\item $F(\alpha',\beta')$ is bounded,
\item $\abs{\alpha'-\alpha'^*}$ is bounded,
\item $\abs{v_{i}}=1$ and $\langle v_{i+1},v_{j+1}\rangle=0$,
\item $\res_{\lambda_1}(T_i)$ are conjugate under $\U{m}$ to $\tau_i^m$ and $\res_{\lambda_N}(T_i)$ are conjugate under $\U{m}$ to $\diag(\tau_i^{k_n},\dots,\tau_i^{k_1})$, where $T_i$ correspond to $(\alpha',\beta')$ via (\ref{from t to a b}).
\end{enumerate}
\begin{proof}
By lemma \ref{power -1/2}, one may assume in the expansion of $F(\alpha,\beta)$ near $\lambda_N$ there is no power $-\frac{1}{2}$-term. Also assume condition (ii) to (iv) to hold already, as these can be achieved by a gauge transformation that is constant near the ends of the interval, which preserves the vanishing of the power $-\frac{1}{2}$-term. $\res_{\lambda_i}(F(\alpha,\beta))$ can then be gauged to zero as in \cite[p. 399]{donaldson1984}, where near one can set $g=\exp((\cdot)\chi)$ for $\chi=\diag(\chi_1,\dots,\chi_n)$ and find solutions for $\chi_i$ individually by the same argument.
\end{proof}
\end{lemma}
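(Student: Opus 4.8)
The plan is to remove, end by end, the singular terms in the expansion of $F(\alpha,\beta)$, working from the most singular power downward, by gauge transformations chosen so as not to leave the class of $k$-Nahm complexes. Near $\lambda_1$ the fields are meromorphic with simple poles, so only the powers $z^{-2}$ and $z^{-1}$ can appear in $F$; near $\lambda_N$ the half-power off-diagonal blocks of $\alpha$ and $\beta$ additionally produce a $z^{-1/2}$-term, but no $z^{-3/2}$, since by the boundary conditions neither $\alpha$ nor $\beta$ carries a $z^{-1/2}$ part (their off-diagonal blocks begin at worst at order $z^{1/2}$). Accordingly I would first invoke Lemma~\ref{power -1/2} to pass to an equivalent complex whose $F$ has no $z^{-1/2}$-term near $\lambda_N$; this is the only genuinely non-standard reduction, and with it in hand both ends can be treated on the same footing.

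Next I would apply a gauge transformation equal to a fixed matrix on a neighbourhood of each endpoint, arranging conditions (ii)--(iv) at once. Choose it so that $\res_{\lambda_1}(\alpha)$ and $\res_{\lambda_N}(\alpha)$ become the real-diagonal normal forms $x_1$, $x_N$ of Proposition~\ref{normal form} and $\res(\beta)$ the corresponding standard nilpotents, so that the associated $T_i$ have $\res_{\lambda_1}(T_i)$ and $\res_{\lambda_N}(T_i)$ conjugate under $\U{m}$ to $\tau_i^m$ and $\diag(\tau_i^{k_n},\dots,\tau_i^{k_1})$; such a gauge exists because the residue data together with the cyclic vectors of Definition~\ref{def k nahm complex}(iii)--(iv) form an $\su{2}$-structure with cyclic vectors, which is conjugate to the standard one. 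The residual centraliser freedom --- block-wise scalars together with the unitarity of the standard representation --- then normalizes the $v_i$ to $\abs{v_i}=1$ with $\langle v_{i+1},v_{j+1}\rangle=0$, giving (iii). Since $\res_{\lambda_i}(\alpha)$ is now Hermitian and the remaining off-diagonal blocks are bounded by Proposition~\ref{normal form}, $\alpha-\alpha^*$ has no pole, giving (ii); and since the pure-pole part of $(\alpha,\beta)$ is now an exact solution of Nahm's equations, the $\su{2}$ relations cancel the $z^{-2}$-term of $F$. Being constant near the ends, this gauge introduces no new powers of $z$ --- so the vanishing of the $z^{-1/2}$-term survives --- and respects the half-power boundary conditions of Definition~\ref{def k nahm complex}.

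At this point $F$ has at most a simple pole at each end, and it remains to remove $\res_{\lambda_i}(F)$, for which I would follow Donaldson \cite[p.~399]{donaldson1984}. One seeks a gauge $g=\exp(f\chi)$, with $f$ vanishing at the endpoint and $\chi$ constant lying in the centraliser of $\res_{\lambda_i}(\alpha)$; the term $-\tfrac12\dot gg^{-1}$ interacts with the simple pole $\res_{\lambda_i}(\alpha)/z$ to shift $\res_{\lambda_i}(F)$, the map $\chi\mapsto(\text{shift})$ being invertible on the centraliser, and $g\to\idmat$ at the endpoint so that (ii)--(iv) and the boundedness of $\alpha-\alpha^*$ are preserved. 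Near $\lambda_1$ this is one such problem. Near $\lambda_N$ the centraliser of the block residue $x_N$ is block-diagonal, so one takes $\chi=\diag(\chi_1,\dots,\chi_n)$ and solves for each $\chi_i$ separately by the same argument, which yields (i) and completes the construction.

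The crux, and the one place the reducible setting genuinely differs from \cite{donaldson1984} and \cite{hurtubise1989}, is this last block-diagonal step near $\lambda_N$. One must check that $\res_{\lambda_N}(F)$ really lies in the block-diagonal centraliser, so that a block-diagonal $\chi$ suffices: the only possible off-diagonal contributions to the residue come from the $z^0$ parts of the off-diagonal blocks of $\alpha$ and $\beta$, which by Proposition~\ref{normal form} are present only between blocks with equal $k_i$, and one must verify these do not obstruct the block-wise solution --- either by showing they vanish after the preceding normalizations or by grouping equal-$k_i$ blocks and solving within each group, where $\res_{\lambda_N}(F)$ is Hermitian and Donaldson's argument still applies. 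One must also confirm that $\exp(f\chi)$ neither reintroduces a $z^{-1/2}$-term nor violates the prescribed half-power orders, which is exactly what the compatibility of the block structure with the centraliser, together with Lemma~\ref{power -1/2}, secures.
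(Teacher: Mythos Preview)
Your proposal is correct and follows essentially the same three-step approach as the paper: invoke Lemma~\ref{power -1/2} to kill the $z^{-1/2}$-term of $F$ near $\lambda_N$, then use a gauge transformation constant near the endpoints to arrange (ii)--(iv), and finally apply Donaldson's $g=\exp((\cdot)\chi)$ argument block-diagonally with $\chi=\diag(\chi_1,\dots,\chi_n)$ to remove $\res_{\lambda_i}(F)$. Your additional discussion of why $\res_{\lambda_N}(F)$ is compatible with the block structure, and why the final gauge does not reintroduce a $z^{-1/2}$-term, fills in details the paper leaves implicit but does not depart from its strategy.
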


With this, one has

\begin{theorem}[{\cite[pp. 399-402]{donaldson1984}}]\label{almost real nahm complex}
Let $((\alpha,\beta),v)$ be a $k$-Nahm complex, nice in the sense of lemma \ref{nice nahm complex}. Then there exists $g\in C^0(\overline{\lambda},\GL{m}{\C})$, smooth on the interior, with $h(g(\lambda_i))=\idmat$ and $\dot g$ bounded in $\lambda$ such that
\begin{enumerate}[label=\arabic*.]
\item $(\alpha',\beta')=g.(\alpha,\beta)$ solves the real equation, with $\alpha'=\alpha'^*$ over $\lambda$, i.e. $T=(T_1,T_2,T_3)$ defined from $(\alpha',\beta')$ via (\ref{from t to a b}) is a solution to Nahm's equations.
\item There exists $g_1\in\U{m}$ with
\begin{align*}
\abs{u_1T_i(\lambda_2+z)g_1^{-1}-\frac{\tau_i^m}{z}}
\end{align*}
bounded in for $z>0$ small, and there exists $g_N\in\U{m}$ with
\begin{align*}
\abs{g_NT_i(\lambda_N+z)g_N^{-1}-\frac{\diag\left(\tau_i^{k_n},\dots,\tau_i^{k_1}\right)}{z}}
\end{align*}
bounded for $z<0$ of small modulus.
\end{enumerate}
\end{theorem}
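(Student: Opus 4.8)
The plan is to follow Donaldson's argument \cite[pp.~399--402]{donaldson1984}: exhaust $\lambda$ by the truncated intervals $\lambda(\varepsilon)$, solve the real equation on each with controlled endpoint data, derive a priori bounds uniform in $\varepsilon$ and up to $\partial\lambda$, and pass to the limit. So first, for each small $\varepsilon>0$, I would gauge $(\alpha,\beta)$ to $(0,\beta_0)$ on $\overline{\lambda(\varepsilon)}$ by Proposition~\ref{normal form}(a), apply Proposition~\ref{ex and un on eps} with endpoint data chosen so that, after conjugating back, the resulting $g_\varepsilon\in C^0(\overline{\lambda(\varepsilon)},\GL{m}{\C})$ (smooth on the interior) satisfies $h(g_\varepsilon)=\idmat$ at both ends of $\lambda(\varepsilon)$ and $F(g_\varepsilon.(\alpha,\beta))=0$ on $\lambda(\varepsilon)$; the same proposition supplies uniqueness up to unitary gauge, which is what gives uniqueness of the limit below.

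For the a priori estimates, the key point is that, since $((\alpha,\beta),v)$ is nice in the sense of Lemma~\ref{nice nahm complex}, $F(\alpha,\beta)$ is bounded on $\lambda$, hence integrable, and $\abs{\alpha-\alpha^*}$ is bounded. Writing $\Phi_\varepsilon=\Phi(h(g_\varepsilon))$ and using the differential inequality for $\Phi$ recalled above together with $F(g_\varepsilon.(\alpha,\beta))=0$, one gets $\ddot\Phi_\varepsilon\geq-2\abs{F(\alpha,\beta)}$ weakly on $\lambda(\varepsilon)$. Letting $\psi$ solve $\ddot\psi=-2\abs{F(\alpha,\beta)}$ with $\psi=0$ at the endpoints of $\lambda$ — a function bounded on $\overline\lambda$ — the difference $\Phi_\varepsilon-\psi$ is convex on $\lambda(\varepsilon)$ and equals $-\psi$ at the endpoints of $\lambda(\varepsilon)$, hence is $\leq\sup_{\overline\lambda}\abs{\psi}$ throughout; so $\Phi_\varepsilon\leq C$ with $C$ independent of $\varepsilon$ and $t$. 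The same argument applied to $h(g_\varepsilon)^{-1}$ bounds $\Phi(h(g_\varepsilon)^{-1})$ from above, so all eigenvalues of $h(g_\varepsilon)$ lie in $[e^{-C},e^{C}]$, i.e. $h(g_\varepsilon)$ takes values in a fixed compact subset of $\mathrm{H}(m)$. A further, more technical estimate — exploiting that in the nice gauge the singular parts of $\alpha,\beta$ near $\partial\lambda$ are the explicit real models up to bounded, respectively half-power, errors, and feeding the compactness of $h(g_\varepsilon)$ into the real equation read as an ODE for $h(g_\varepsilon)$ — then bounds $\dot g_\varepsilon$ on $\lambda$, uniformly in $\varepsilon$.

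With these bounds, Arzel\`a--Ascoli gives a subsequence $g_\varepsilon\to g$ locally uniformly on $\lambda$, and bootstrapping the real equation (an ODE system) upgrades this to $C^\infty_{\mathrm{loc}}$; the uniform up-to-boundary estimates yield $g\in C^0(\overline\lambda,\GL{m}{\C})$ with $h(g(\lambda_i))=\idmat$, $\dot g$ bounded, and $F(g.(\alpha,\beta))=0$ on $\lambda$. A final unitary gauge transformation, as in \cite{donaldson1984} — solving $\dot w=w(\alpha'-\alpha'^*)$ for $(\alpha',\beta')=g.(\alpha,\beta)$, which is well posed and preserves the real equation and all boundary data since $\alpha'-\alpha'^*$ is bounded and skew-hermitian — arranges $\alpha'=\alpha'^*$; then $T=(T_1,T_2,T_3)$ built from $(\alpha',\beta')$ via (\ref{from t to a b}) with $T_0=0$ solves Nahm's equations, which is statement~1. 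For statement~2, the residues of $\alpha',\beta'$ at $\lambda_1$ and $\lambda_N$ stay $\GL{m}{\C}$-conjugate to $x_1,y_1$ and $x_N,y_N$ (preserved by every gauge transformation used), while niceness forces $\res_{\lambda_1}(T_i)$ and $\res_{\lambda_N}(T_i)$ to be $\U{m}$-conjugate to the standard models $\tau_i^m$ and $\diag(\tau_i^{k_n},\dots,\tau_i^{k_1})$; since $h(g)$ and $\dot g$ are bounded and $\alpha'=\alpha'^*$, the leading singular term of $T_i$ at each end is exactly a fixed unitary conjugate of the model (near $\lambda_N$ the off-diagonal corrections carry nonnegative half-powers of $z$, hence stay bounded as $z\to0$) with $O(1)$ remainder, and picking $g_1,g_N\in\U{m}$ realizing these conjugations gives the two boundedness assertions.

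The step I expect to be the main obstacle is the a priori estimate near the ends of $\lambda$: making the $\Phi$-bounds genuinely uniform up to $\partial\lambda$ and, more delicately, bounding $\dot g$ — not merely $h(g)$ — there, which requires controlling the interaction of the gauge with the poles of $\alpha$ and $\beta$. Relative to \cite{donaldson1984,hurtubise1989} this is complicated by the half-powers in the off-diagonal blocks near $\lambda_N$; but Lemmas~\ref{power -1/2} and~\ref{nice nahm complex} have already eliminated the power $-\tfrac12$ term and made $F(\alpha,\beta)$ bounded, so the remaining analysis runs parallel to Donaldson's.
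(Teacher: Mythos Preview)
The paper does not give its own proof of this theorem: it is stated with the attribution \cite[pp.~399--402]{donaldson1984} and no proof environment follows; the surrounding text then \emph{uses} the theorem rather than argues for it. Your proposal is a faithful outline of Donaldson's argument as cited --- truncate to $\lambda(\varepsilon)$, solve via Proposition~\ref{ex and un on eps}, obtain $\varepsilon$-uniform $C^0$-bounds on $h(g_\varepsilon)$ from the weak convexity inequality for $\Phi$ together with boundedness of $F(\alpha,\beta)$, upgrade to a $\dot g$-bound near the poles, extract a limit, and finally gauge so that $\alpha'=\alpha'^*$ --- so it matches the paper's intended (outsourced) proof. Your identification of the delicate point (uniform control of $\dot g$ near $\partial\lambda$ in the presence of half-power off-diagonal terms, handled after Lemmas~\ref{power -1/2} and~\ref{nice nahm complex}) is exactly the adaptation the paper flags; nothing further is needed here.
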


To conclude the existence of an equivalence class of real $k$-Nahm complexes in each equivalence class of $k$-Nahm complexes it remains to show the appropriate analytic behavior near the ends of $\lambda$ for the solution obtained in the above theorem. Take now $(\alpha',\beta')$ as in theorem \ref{almost real nahm complex} with $u_N=\idmat$ in 2.. In view of the interpretation of Nahm's equations as a linear flow on the Jacobian of the corresponding spectral curve write
\begin{align*}
A(\zeta)=A_0+A_1\zeta+A_2\zeta^2=2\beta'+4\alpha'\zeta-2\beta'^*\zeta^2\fullstop
\end{align*}
Using this perspective, Hurtubise then shows the following.

\begin{lemma}[{\cite[pp. 623-624]{hurtubise1989}}]
There exists a polynomial $B(\zeta)=B_0+B_1\zeta+B_2\zeta^2$ with meromorphic coefficients and $B_0$ constant, and $f\in C^\infty(\lambda,\GL{m}{\C})$ such that
\begin{align*}
A(\zeta)=fB(\zeta)f^{-1}\comma&&A_1=-2\dot{f}f^{-1}\fullstop
\end{align*}
\end{lemma}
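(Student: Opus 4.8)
The plan is to obtain $f$ directly as the fundamental solution of a linear ordinary differential equation and to read off every asserted property from the complex and real equations together with the known asymptotics of a $k$-Nahm complex. Recall that, with $\alpha'=\alpha'^*$, Nahm's equations for $(\alpha',\beta')$ are equivalent to the Lax equation $\dot A(\zeta)=[-2\alpha'+2\beta'^*\zeta,A(\zeta)]$, whose $\zeta^0$-component is exactly the complex equation $\dot A_0=[-2\alpha',A_0]$ -- this is essentially all that will be used, and at the same time it shows why one cannot conjugate $A(\zeta)$ to a fully constant polynomial (the Lax matrix is linear in $\zeta$, while $\dot f f^{-1}$ is not), so that only $B_0$ can be made constant. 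First I would fix an interior point $s_0\in\lambda$ and let $f\in C^\infty(\lambda,\GL{m}{\C})$ be the unique solution of $\dot f=-2\alpha' f$ with $f(s_0)=\idmat$. Since $-2\alpha'$ is real-analytic on $\lambda$ and the system is linear, $f$ exists on all of $\lambda$, and it is pointwise invertible because $\det f$ solves a scalar linear equation ($\tfrac{\dv}{\dv s}\det f=-2(\tr\alpha')\det f$) and hence never vanishes. By construction $\dot f f^{-1}=-2\alpha'=-\tfrac12 A_1$, so the relation $A_1=-2\dot f f^{-1}$ holds automatically.

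Next I would set $B(\zeta)=f^{-1}A(\zeta)f=B_0+B_1\zeta+B_2\zeta^2$, so that $A(\zeta)=fB(\zeta)f^{-1}$ is tautological, with $B_1=4f^{-1}\alpha' f$ and $B_2=-2f^{-1}\beta'^*f$. Differentiating $B_0=f^{-1}A_0f$ and substituting $\dot f f^{-1}=-2\alpha'$ gives
\begin{align*}
\dot B_0=f^{-1}\bigl(\dot A_0-[\dot f f^{-1},A_0]\bigr)f=f^{-1}\bigl(\dot A_0+[2\alpha',A_0]\bigr)f=0
\end{align*}
by the complex equation, so $B_0$ is constant (indeed $B_0=2\beta'(s_0)$).

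It remains to verify that $B_1$ and $B_2$ are meromorphic near $\lambda_1$ and $\lambda_N$; on the interior they are real-analytic since $\alpha',\beta'$ and $f$ are. Near each endpoint the boundary conditions of a (real) $k$-Nahm complex force $\alpha'$ to have a first-order pole, so $\dot f=-2\alpha' f$ has a regular singular point there, and the classical structure theory for such systems yields a local factorization $f=H\,z^{\Lambda}$ with $\Lambda=-2\res(\alpha')$ and $H$ invertible with an expansion in the same nonnegative (half-)integer powers of $z$ that already occur in $\alpha'$; the block $\su{2}$-representation form of $\res(\alpha')$ rules out logarithmic terms, exactly as in \cite{hurtubise1989}. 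Conjugating $\alpha'$ and $\beta'^*$ by such an $f$ then produces at worst a simple pole: by the normal form of Proposition \ref{normal form}, the half-powers in the off-diagonal blocks of the subleading parts of $\alpha'$ and $\beta'$ near $\lambda_N$ are tuned precisely to the eigenvalue differences of $\Lambda$, so that $z^{\Lambda}(\,\cdot\,)z^{-\Lambda}$ introduces no pole of order exceeding one. This half-power bookkeeping near $\lambda_N$ is the main obstacle; once it is carried out by the same mechanism underlying Proposition \ref{normal form} and Lemma \ref{power -1/2}, $B_1=4f^{-1}\alpha' f$ and $B_2=-2f^{-1}\beta'^*f$ are meromorphic, which completes the proof.
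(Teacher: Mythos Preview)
Your construction of $f$ as the fundamental solution of $\dot f=-2\alpha' f$ and the verification that $B_0$ is constant are correct and are essentially the elementary half of the argument. The problem lies entirely in the meromorphicity claim for $B_1$ and $B_2$, where your reasoning becomes circular in the context in which the lemma is invoked.

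At the point where this lemma is applied, $(\alpha',\beta')$ is the output of Theorem~\ref{almost real nahm complex}: all that is known is that $(\alpha',\beta')$ solves the real equation, that $\alpha'=\alpha'^*$, and that $T_i-\res(T_i)/z$ is \emph{bounded} near each endpoint. No expansion in integer or half-integer powers of $z$ is available yet; establishing precisely such an expansion is the \emph{purpose} of the lemma in the surrounding argument. When you write ``the boundary conditions of a (real) $k$-Nahm complex force $\alpha'$ to have a first-order pole'' and then invoke the normal form of Proposition~\ref{normal form} and the half-power bookkeeping of Lemma~\ref{power -1/2}, you are using properties of the \emph{input} $(\alpha,\beta)$ (or of an already-known real $k$-Nahm complex), not of $(\alpha',\beta')=g.(\alpha,\beta)$. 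The gauge $g$ produced by Theorem~\ref{almost real nahm complex} is merely $C^0$ up to the boundary with $\dot g$ bounded, so it does not transport those asymptotic expansions to $(\alpha',\beta')$. Consequently the regular-singular-point factorization $f=H\,z^{\Lambda}$ with $H$ admitting a convergent (half-)power expansion is not justified, and the conclusion that $B_1,B_2$ are meromorphic does not follow from your local ODE analysis.

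The route actually taken (cited from Hurtubise) is genuinely different and avoids this circularity: one uses that Nahm's equations are an isospectral Lax flow, so that the spectral curve $\det(\eta-A(\zeta))=0$ is fixed, and that the induced flow on its Jacobian is \emph{linear} in $s$. This algebro-geometric description produces $B(\zeta)$ with coefficients built from theta-type data on the spectral curve, hence meromorphic in $s$, without any a priori knowledge of the asymptotic expansion of $(\alpha',\beta')$ at the endpoints. The paper then feeds this meromorphicity back through $f(\lambda_N+z)=(gk)(\lambda_N+z)z^{-2x_N}$ to deduce the asymptotics of $T_i$ --- exactly the information you were trying to assume.
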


Note that in this situation $(\alpha',\beta')=f.(0,\beta_0)$ for $2\beta_0=B_0$. Since $g$ in the above theorem is unique up to constant unitary gauge transformations, proposition \ref{normal form}, lemma \ref{power -1/2}, lemma \ref{nice nahm complex} and theorem \ref{almost real nahm complex} imply that
\begin{align*}
f(\lambda_N+z)=(gk)(\lambda_N+z)z^{-2x_N}
\end{align*}
for $z<0$ of small modulus for some $k\in\mathcal{G}$ with
\begin{align*}
k(\lambda_N+z)=\idmat+\left(\begin{matrix}O(z)&z^\frac{3}{2}O(1)&\dots&z^\frac{3}{2}O(1)\\z^\frac{3}{2}O(1)&O(z)&\ddots&\vdots\\\vdots&\ddots&\ddots&z^\frac{3}{2}O(1)\\z^\frac{3}{2}O(1)&\dots&z^\frac{3}{2}O(1)&O(z)\end{matrix}\right)
\end{align*}
and $g\in\mathcal{G}$ with $\dot g$ bounded and $g(\lambda_N)=\idmat$. $B_1=-2f^{-1}\dot{f}$ then implies
\begin{align*}
M(\lambda_N+z)=\left((gk)^{-1}(gk)\dot{\hphantom{)}}\right)(\lambda_1+z)=z^{-2x_N}\left(-\frac{B_1}{2}-\frac{2x_N}{z}\right)z^{2x_N}
\end{align*}
for $z<0$ of small modulus. Hence, near $0$, $M(z)$ has diagonal blocks that are meromorphic in $z$ and off-diagonal blocks that are meromorphic in $z^{1-\frac{\rho}{2}}$ with $\rho\in\{0,1\}$ and $\rho$ and $m$ having the same parity. The boundedness of $(gk)\dot{\hphantom{)}}$ then implies
\begin{align*}
M(\lambda_1+z)=\left(\begin{matrix}O(z)&z^\frac{\rho}{2}O(1)&\dots&z^\frac{\rho}{2}O(1)\\z^\frac{\rho}{2}O(1)&O(z)&\ddots&\vdots\\\vdots&\ddots&\ddots&z^\frac{\rho}{2}O(1)\\z^\frac{\rho}{2}O(1)&\dots&z^\frac{\rho}{2}O(1)&O(z)\end{matrix}\right)\fullstop
\end{align*}
Solving for $gk$, one obtains a similar decomposition for $gk$ and thus also for $T_i$ just with meromorphic diagonal blocks. The following lemma together with condition 2. in theorem \ref{almost real nahm complex} then yields the appropriate meromorphic behavior of $T$ near $\lambda_N$.

\begingroup
\setcounter{tmpa}{\value{theorem}}
\setcounter{theorem}{0}
\renewcommand\thetheorem{\ref{representation theory gamma}}
\begin{lemma}
Let $T$ be a solution to Nahm's equations (\ref{nahms eq}) such that for $z<0$ of small modulus
\begin{align*}
T_i(\lambda_N+z)=\frac{\diag\left(\tau_i^{k_n},\dots,\tau_i^{k_1}\right)}{z}+\left(\begin{matrix}z^{\gamma_{11}}O(1)&\dots&z^{\gamma_{1n}}O(1)\\
\vdots&\ddots&\vdots\\
z^{\gamma_{n1}}O(1)&\dots&z^{\gamma_{nn}}O(1)\end{matrix}\right)
\end{align*}
for $\gamma_{ij}\geq0$. Then $2\gamma_{ij}\geq\abs{k_{N-i}-k_{N-j}}$ are integers.
\end{lemma}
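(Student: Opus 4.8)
The statement is purely about the local analytic structure near $\lambda_N$ of a solution to Nahm's equations whose residue is $\diag(\tau_i^{k_n},\dots,\tau_i^{k_1})$, i.e. the sum of irreducible $\mathfrak{su}(2)$-representations of dimensions $k_n,\dots,k_1$. The plan is to combine the flow interpretation of Nahm's equations with standard representation theory of $\mathfrak{sl}(2,\C)$ acting on the residue.

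Let me think through the key steps:

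First, I would recall the $A(\zeta)$ formalism already set up in the excerpt: writing $A(\zeta) = A_0 + A_1\zeta + A_2\zeta^2$ with $A_0 = 2\beta$, $A_1 = 4\alpha$, $A_2 = -2\beta^*$, Nahm's equations become the Lax-type equation $\dot A(\zeta) = [A(\zeta), A_+(\zeta)]$ for a suitable splitting. The residues of $A_0,A_1,A_2$ at $\lambda_N$ assemble, up to the normalization constants, into a triple $(e,h,f)$ (or rather the images of $\tau_i$) that satisfies the $\mathfrak{sl}(2,\C)$ commutation relations and equals $\diag$ of the standard irreducible pieces. The matrix $T_i(\lambda_N + z) - \diag(\dots)/z$ is the regular part, and its $(p,q)$-block (in the block decomposition matching $k_{N-p}, k_{N-q}$) transforms under the adjoint action of the grading element $h$ with a definite eigenvalue. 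Concretely, the diagonal element $x_N$ (or its image under the representation) acts on the $(p,q)$-block with weights, and the constraint $\dot T_i + [T_j,T_k]=0$ forces each term in the Laurent-type expansion of a block to sit in a fixed weight space for the Cartan of the residue $\mathfrak{sl}(2,\C)$ acting diagonally on that block.

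Second — and this is the crux — I would argue that because the leading singular term is exactly $\diag(\tau_i^{k_n},\dots,\tau_i^{k_1})/z$ with $\tau_i^\ell$ the \emph{standard} irreducible representation, the recursion obtained by plugging a formal expansion $T_i(\lambda_N+z) = r_i/z + \sum_{s} c_{i,s} z^{\gamma + s/2}$ into Nahm's equations becomes, order by order, a system of linear equations of the form $(\ad_{r_1} - c)\,(\text{coefficient}) = (\text{lower-order data})$, where $c$ runs over the eigenvalues of $\ad$ of the $\mathfrak{sl}(2,\C)$-triple. Since $\ad$ of an $\mathfrak{sl}(2,\C)$-triple built from integer-dimensional irreducibles has eigenvalues in $\frac{1}{2}\Z$ and, more precisely, the off-diagonal $(p,q)$-block sees eigenvalues governed by the tensor product $V_{k_{N-p}} \otimes V_{k_{N-q}}^*$, one reads off that the allowed exponents in that block are precisely $\equiv \frac{|k_{N-p}-k_{N-q}|}{2} \pmod 1$ and bounded below by $\frac{|k_{N-p}-k_{N-q}|}{2}$. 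The hypothesis $\gamma_{ij} \ge 0$ together with the consistency of the recursion then forces $\gamma_{ij} \ge \frac{|k_{N-i}-k_{N-j}|}{2}$; the integrality of $2\gamma_{ij}$ is immediate from the half-integrality of the $\ad$-spectrum. The cleanest way to package this is: in the $(p,q)$-block, the smallest possible exponent corresponds to the lowest weight vector of the relevant $\mathfrak{sl}(2,\C)$-subrepresentation, which is $\frac{|k_{N-p}-k_{N-q}|}{2}$, because $V_a \otimes V_b$ (with $a,b$ the dimensions) decomposes as $V_{|a-b|+1} \oplus \dots \oplus V_{a+b-1}$ and the gradings run in steps of one.

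The main obstacle, I expect, is making the recursion argument rigorous in the presence of half-integer powers: one cannot simply invoke a formal power series solution since the expansion is in $z^{1/2}$ and the off-diagonal blocks genuinely carry half-powers, so I would need to (a) set up the expansion in $z^{1/2}$ from the start, (b) verify that the operator $\ad_{r_1} - c$ is invertible on the relevant weight-graded pieces except precisely at the resonances that produce the claimed minimal exponents, and (c) check that the $O(1)$ hypothesis rules out the negative-exponent solutions of the indicial equation, leaving only the non-negative ones, whose minimal value in each block is the asserted $\frac{|k_{N-i}-k_{N-j}|}{2}$. A secondary technical point is that $\res_{\lambda_N}$ refers to the $z^{-1}$-coefficient even when half-powers are present, so I would want to confirm that no $z^{-1/2}$ term can occur in the singular part — which again follows from the weight bookkeeping, since a $z^{-1/2}$ contribution would have to lie in a weight space that is empty for an $\mathfrak{sl}(2,\C)$-triple assembled from integer-dimensional irreducibles. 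Once these points are settled, the conclusion $2\gamma_{ij} \ge |k_{N-i}-k_{N-j}|$, $2\gamma_{ij} \in \Z$, follows directly.
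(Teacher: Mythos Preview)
Your plan is essentially the paper's: extract the leading-order (indicial) equation from Nahm's equations and read off the admissible exponents from the $\mathfrak{sl}(2)$ decomposition of the block $\mathrm{Mat}_{k_{N-i'}\times k_{N-j'}}\cong S^{k_{N-i'}-1}\otimes (S^{k_{N-j'}-1})^*$. Two points where the paper is sharper than your sketch. First, the $A(\zeta)$ formalism and the full recursion are not needed; only the coefficient of $z^{\gamma_{i'j'}-1}$ in the $(i',j')$-block of Nahm's equations is used, and it says directly that $-\gamma_{i'j'}$ is an eigenvalue of a fixed linear operator on the triple $(c_{10}^{i'j'},c_{20}^{i'j'},c_{30}^{i'j'})$. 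Second, that operator is \emph{not} of the form $\ad_{r_1}-c$ acting componentwise: the three Nahm equations couple the three components, and the correct indicial operator is $\sum_{i=1}^3 \tau_i^2\otimes D_i^{i'j'}$ acting on $S^2\otimes\mathrm{Mat}_{k_{N-i'}\times k_{N-j'}}$, with $D_i^{i'j'}(\cdot)=\tau_i^{k_{N-j'}}(\cdot)-(\cdot)\tau_i^{k_{N-i'}}$. The paper computes its spectrum via the Casimir identity
\[
2\sum_i \tau_i^2\otimes D_i^{i'j'}=C\big(S^2\otimes S^{k_{N-j'}-1}\otimes (S^{k_{N-i'}-1})^*\big)-\idmat\otimes C\big(S^{k_{N-j'}-1}\otimes (S^{k_{N-i'}-1})^*\big)-C(S^2)\otimes\idmat,
\]
together with Clebsch--Gordan, and then checks that the minimal nonnegative eigenvalue in the resulting list is $\tfrac{|k_{N-i'}-k_{N-j'}|}{2}$. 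With this correction your outline goes through, and no separate invertibility or $z^{-1/2}$ discussion is required.
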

\endgroup
\setcounter{theorem}{\thetmpa}

The meromorphicity of $T_i$ near $\lambda_1$ follows by a similar argument, so that condition 2. in theorem \ref{almost real nahm complex} yields the desired boundary conditions near $\lambda_1$, making $((\alpha',\beta'),v')$ with $\abs{v_i'}=1$ and $\langle v_{i+1}',v_{j+1}'\rangle=0$ a real $k$-Nahm complex.

There remains the question of uniqueness. Let now $((\alpha,\beta),v)$ be another real $k$-Nahm complex that is equivalent to $((\alpha',\beta'),v')$ as a $k$-Nahm complex via some $g\in\mathcal{G}$. The properties of the representation and the fact that $\abs{v_i'}=1$ imply $h(g(\lambda_i))=\idmat$. Proposition \ref{ex and un on eps} then implies $h(g_1)=h(g_2)$, giving the equivalence of $((\alpha,\beta),v)$ and $((\alpha',\beta'),v')$ as real $k$-Nahm complexes. One obtains

\begin{theorem}\label{unique real class}
Each equivalence class of $k$-Nahm complexes contains a unique equivalence class of real $k$-Nahm complexes.\qed
\end{theorem}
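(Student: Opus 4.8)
The statement splits into existence and uniqueness of a real representative, and the plan is to chain together the results developed in this section. For existence, start from an arbitrary $k$-Nahm complex $((\alpha,\beta),v)$ and first normalise it: Lemma~\ref{power -1/2} removes the $z^{-1/2}$-term from $F(\alpha,\beta)$ near $\lambda_N$, and Lemma~\ref{nice nahm complex} then produces an equivalent ``nice'' representative for which $F(\alpha,\beta)$ and $\alpha-\alpha^*$ are bounded, $v$ is already orthonormalised as in Definition~\ref{def real k nahm complex}(iii), and the residues of the corresponding $T_i$ are $\U{m}$-conjugate to the model matrices at both ends. Theorem~\ref{almost real nahm complex} (Donaldson's variational argument adapted to $\lambda$) then supplies a gauge transformation $g$, continuous on $\overline\lambda$ with $h(g(\lambda_i))=\idmat$ and $\dot g$ bounded, such that $(\alpha',\beta')=g.(\alpha,\beta)$ solves the real equation with $\alpha'=\alpha'^*$, and such that after conjugation by suitable constant unitaries $T_i$ differs from its model residue by a bounded term at each end.

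What remains is to upgrade ``bounded error'' to the precise (half-)meromorphic boundary behaviour demanded of a real $k$-Nahm complex, and this is the crux. Near $\lambda_N$, pass to the spectral-curve picture by writing $A(\zeta)=2\beta'+4\alpha'\zeta-2\beta'^*\zeta^2$; the factorisation lemma of Hurtubise recalled above gives $A(\zeta)=fB(\zeta)f^{-1}$ with $B$ polynomial with meromorphic coefficients, $B_0$ constant and $A_1=-2\dot f f^{-1}$, so that $(\alpha',\beta')=f.(0,\beta_0)$. Comparing $f$ with the normal form of Proposition~\ref{normal form}, and using that the $g$ of Theorem~\ref{almost real nahm complex} is unique up to constant unitary gauge, yields $f(\lambda_N+z)=(gk)(\lambda_N+z)z^{-2x_N}$ for some $k\in\mathcal{G}$ with diagonal blocks $O(z)$ and off-diagonal blocks $z^{3/2}O(1)$. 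Substituting into $B_1=-2f^{-1}\dot f$ and using boundedness of the derivative of $gk$ forces the logarithmic derivative $M=(gk)^{-1}(gk)\dot{\hphantom{)}}$, hence $gk$ and hence $T$, to have meromorphic diagonal blocks and off-diagonal blocks controlled by half-integer powers $z^{\rho/2}$. Finally, Lemma~\ref{representation theory gamma} converts the a priori nonnegative off-diagonal exponents $\gamma_{ij}$ into the sharp statement $2\gamma_{ij}\ge\abs{k_{N-i}-k_{N-j}}$ with $2\gamma_{ij}\in\Z$, which is exactly Definition~\ref{def real k nahm complex}(ii); an analogous but easier argument at $\lambda_1$ gives meromorphicity there, and condition~2 of Theorem~\ref{almost real nahm complex} identifies the residues with $i\tau_1^m$ and $\tau_2^m+i\tau_3^m$ up to $\U{m}$-conjugacy. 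Together with the already-arranged conditions on $v'$, this exhibits $((\alpha',\beta'),v')$ as a real $k$-Nahm complex.

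For uniqueness, let $((\alpha,\beta),v)$ be another real $k$-Nahm complex equivalent, as a $k$-Nahm complex, to $((\alpha',\beta'),v')$ via $g\in\mathcal{G}$. Because the residues of both complexes are $\U{m}$-conjugate to the same blockwise-irreducible $\su{2}$-model and $\abs{v_i}=\abs{v_i'}=1$, a rigidity argument on the eigenvectors of the residues forces $h(g(\lambda_i))=\idmat$ at both ends. Applying the uniqueness half of Proposition~\ref{ex and un on eps} over the full interval $\lambda$ then gives $h(g)=\idmat$ throughout, i.e.\ $g\in\mathcal{G}^\R$, so the two complexes are equivalent as real $k$-Nahm complexes. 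The main obstacle is the endpoint bootstrap near $\lambda_N$ in the existence part: reconciling the spectral-curve factorisation with the normal form to pin down the half-power structure, which is precisely where the non-irreducible residue at $\lambda_N$ obstructs the cleaner arguments of Donaldson and Hurtubise.
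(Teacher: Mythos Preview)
Your proposal is correct and follows essentially the same route as the paper's argument: the existence part chains Lemma~\ref{power -1/2}, Lemma~\ref{nice nahm complex}, Theorem~\ref{almost real nahm complex}, Hurtubise's spectral factorisation, and Lemma~\ref{representation theory gamma} in exactly the order and manner the paper does to upgrade the bounded remainder to the required half-meromorphic boundary behaviour, and the uniqueness part is likewise the paper's own argument via $h(g(\lambda_i))=\idmat$ and the uniqueness clause of Proposition~\ref{ex and un on eps}. Your identification of the endpoint bootstrap at $\lambda_N$ as the main obstacle also matches the emphasis of the text.
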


\section{The classification of $k$-Nahm complexes}\label{the classification of k nahm complexes}

In what follows, an adaptation of the approach by Donaldson in \cite[Section 3]{donaldson1984} is given in order to establish an analogous correspondence between $k$-Nahm complexes and certain rational maps.

\begin{proposition}\label{nahm complexes and matrices}
There is a bijective correspondence between
\begin{enumerate}[label=(\alph*), itemsep=0mm]
\item equivalence classes of $k$-Nahm complexes, i.e. the set $\mathcal{N}_k$, and
\item equivalence classes under $\GL{m}{\C}$ of pairs $(B,w)\in\GL{m}{\C}\times((\C^m)^*\times(C^m)^n)$ such that
\begin{enumerate}[label=(\roman*), itemsep=0mm]
%\item $B$ is an $m\times m$ matrix and $w=(w_{11},w_{12},w_2)$ with $w_{1i}\in\C^m$ column vectors and $w_2\in(\C^m)^*$ a row vector,
\item $w_1^T$ is a cyclic vector for $B^T$ and
\item $\det(M(k))\neq0$ and $\forall i>j:\forall l\in\{1,\dots,k_j-k_i\}:\det(M(k+l(e_i-e_j)))=0$, where for $\ell=(\ell_1,\dots,\ell_n)\in\N^n$ we define
\begin{align*}
M(\ell)=\left(\begin{matrix}w_{N}&\dots&B^{\ell_n-1}w_{N}&\dots&w_{2}&\dots&B^{\ell_1-1}w_{2}\end{matrix}\right)\fullstop
\end{align*}
\end{enumerate}
\end{enumerate}
\begin{proof}
Let $((\alpha,\beta),v)$ be a $k$-Nahm complex in normal form and let $g:\lambda\to\GL{m}{\C}$ be given near $\lambda_i$ by $g(\lambda_i+z)=z^{2x_i}$. Then define
\begin{align*}
(B,w)=\left((g.\beta)(\lambda_1),\left(e_1^T,(gu_2)(\lambda_1),\dots,(gu_N)(\lambda_1)\right)\right)\comma
\end{align*}
where $u_{i+1}$ are the solution in lemma \ref{parallel transport solutions} for $((\alpha,\beta),v)$. Note that
\begin{align*}
\frac{\dv}{\dv t}(gu_{i+1})=-2(g.\alpha)gu_{i+1}
\end{align*}
with $\alpha(\lambda_1+z)-x_1/z$ upper triangular and holomorphic in $z$ for $z>0$ small, so that $gu_{i+1}$ extends continuously to $\lambda_1$. From the construction it is immeadiate that $\det(M(k))\neq 0$ and that $w_1^T$ is a cyclic vector for $B^T$.

Note that in the basis $(w_{N},\dots,B^{k_n-1}w_{N},\dots,w_{2},\dots,B^{k_1-1}w_{2})$ the $(i,i)$-block of the matrix $B$ takes the form
\begin{align*}
\left(\begin{matrix}0&&&&C_{(m_{N+1-i}+1)i}\\1&0&\\&\ddots&\ddots&&\vdots\\&&1&0&\\&&&1&C_{m_{N-i}i}\end{matrix}\right)\comma
\end{align*}
the $(i,j)$-block for $i<j$ takes the form
\begin{align*}
\left(\begin{matrix}0&\dots&0&C_{(m_{N+1-i}+1)j}\\
\vdots&\ddots&\vdots&\vdots\\
0&\dots&0&C_{m_{N-i}j}\end{matrix}\right)\comma
\end{align*}
and the $(i,j)$-block for $i>j$ takes the form
\begin{align*}
\left(\begin{matrix}0&\dots&0&C_{(m_{N+1-i}+1)j}\\
\vdots&\ddots&\vdots&\vdots\\
0&\dots&0&C_{(m_{N+1-i}+k_{N-j})j}\\
0&\dots&0&0\\
\vdots&\ddots&\vdots&\vdots\\
0&\dots&0&0\end{matrix}\right)\comma
\end{align*}
with $C\in\mat{m\times n}{\C}$, as in proposition \ref{normal form}. This yields that for $i>j$ and $l\in\{1,\dots,k_j-k_i\}$ the matrix $M(k+l(e_i-e_j))$ is conjugate to one with $m_j^\text{th}$ row zero, leading to $\det(M(k+l(e_i-e_j))=0$, so that $(B,w)$ satisfies (i) and (ii) in the proposition.

For a $k$-Nahm complex $((\alpha',\beta'),v')$ in normal form equivalent to $((\alpha,\beta),v)$ via $g'\in\mathcal{G}$. Let $u_{i+1}'$ be the solutions in lemma \ref{parallel transport solutions} for $((\alpha',\beta'),v')$. Setting
\begin{align*}
(B',w')=\left((g.\beta')(\lambda_1),\left(e_1^T,gu'_2(\lambda_1),\dots,gu'_N(\lambda_1)\right)\right)\comma
\end{align*}
one immediately obtains that $B=B'$. Also $v'=v$ implies $g'(\lambda_i)=\idmat$, so that $g'u_{i+1}$ solve the problems in lemma \ref{parallel transport solutions} for $((\alpha',\beta'),v')$. Uniqueness then gives $u'_{i+1}=g'u_{i+1}$ and hence $w'=w$, so that the above yields a well-defined map on equivalence classes.

Let now $(B,w)$ be any tuple satisfying (i) and (ii) in the proposition. Then the matrix of $B$ in the basis $(w_1,\dots,w_1B^{m-1})$ is of the form
\begin{align*}
\left(\small\begin{matrix}0&&&&-q_1\\1&0&\\&\ddots&\ddots&&\vdots\\&&1&0&\\&&&1&-q_m\end{matrix}\right)^T\comma
\end{align*}
so that there exists $g_1\in\GL{m}{\C}$ that conjugates $B$ to that form and taking $w_1$ to $e_1^T$. Further, the matrix of $B$ in the basis $(w_{N},\dots,B^{k_n-1}w_{N},\dots,w_{2},\dots,B^{k_1-1}w_{2})$ has $(i,i)$-block
\begin{align*}
\left(\begin{matrix}0&&&&C_{(m_{N+1-i}+1)i}\\1&0&\\&\ddots&\ddots&&\vdots\\&&1&0&\\&&&1&C_{m_{N-i}i}\end{matrix}\right)\comma
\end{align*}
and $(i,j)$-block for $i\neq j$
\begin{align*}
\left(\begin{matrix}0&\dots&0&C_{(m_{N+1-i}+1)j}\\
\vdots&\ddots&\vdots&\vdots\\
0&\dots&0&C_{m_{N-i}j}\end{matrix}\right)\fullstop
\end{align*}
Let $g_N\in\GL{m}{\C}$ denote the corresponding change of basis. Observe that for $i>j$
\begin{align*}
0&=\det(M(k+(e_i-e_j)))\\
&=\abs{\begin{matrix}
\idmat_{m_{N-i}}&(C_{\ell i})_{\ell=1}^{m_{N-i}}\\
&(C_{\ell i})_{\ell=m_{N-i}+1}^{m_{N-j}-1}&\idmat_{m_{N-j}-m_{N-i}-1}\\
&C_{m_{N-j}i}&&0\\
&C_{(m_{N-j}+1)i}&&1\\
&(C_{\ell i})_{\ell=m_{N-j+2}}^m&&&\idmat_{m-m_{N-j}-1}
\end{matrix}}=\pm C_{m_{N-j}i}\comma
\end{align*}
and that if $(C_{(m_{N-j}-l)i},\dots,C_{m_{N-j}i})=0$ for $l\in\{0,\dots,k_1-k_2-1\}$, then
\setcounter{MaxMatrixCols}{20}
\begin{align*}
0&=\det(M(k+(l+1)(e_i-e_j)))=\pm C_{(m_{N-j}-(l+1))i}^{l+1}\comma
%&=\abs{\small\begin{matrix}
%1&&&&&&&&&&-C_{12}\\
%&\ddots&&&&&&&&&\vdots\\
%&&1&&&&&&&&-C_{(k_1-(n+1))2}&\ddots\\
%&&&0&&&&&&&-C_{(k_1-(n+1)+1)2}&\ddots&{*}\\
%&&&0&&&&&&&0&\ddots&\vdots\\
%&&&\vdots&&&&&&&\vdots&\ddots&{*}\\
%&&&0&&&&&&&0&\ddots&-C_{(k_1-(n+1)+1)2}\\
%&&&1&&&&&&&-C_{(k_1+1)2}&\ddots&0\\
%&&&&\ddots&&&&&&\vdots&\ddots&\vdots\\
%&&&&&1&&&&&-C_{(m-n)2}&\ddots&0\\
%&&&&&&1&&&&-C_{(m-n+1)2}&\ddots&*\\
%&&&&&&&&\ddots&&\vdots&&\vdots\\
%&&&&&&&&&1&-C_{m2}&&*
%\end{matrix}}\\
%&=\pm C_{(k_1-(n+1)+1)2}^{n+1}\comma
\end{align*}
as in the above basis and after moving the the block of columns $m_{N-i}+1$ to $m_{N-i}+l+1$ all the way to the right and then moving the block of rows $m_{N-i}+1$ to $m_{N-i}+l+1$ all the way down, the matrix $M(k+(l+1)(e_i-e_j))$ becomes of the form
\begin{align*}
\left(\begin{matrix}\idmat_{m-(l+1)}\\
&C_{(m_{N-j}-(l+1))i}&&{*}\\
&&\ddots&\\
&&&C_{(m_{N-j}-(l+1))i}
\end{matrix}\right)\fullstop
\end{align*}
This iteratively implies that $C_{(m_{N+1-j}+k_i+1)i}=\dots=C_{m_{N-j}i}=0$.

Let $g'\in\mathcal{G}$ be constant $g_qg_1$ near $\lambda_1$ and $g_N$ near $\lambda_N$, where
\begin{align*}
g_q=\left(\begin{matrix}
q_2&\dots&q_m&1\\
\vdots&\iddots&1\\
q_m&\iddots\\
1
\end{matrix}\right)\fullstop
\end{align*}
Then $g'.(0,B)$ solves the complex equation and is constant $(0,(g_1.B)^T)$ near $\lambda_1$ and $(0,g_N.B)$ near $\lambda_N$. A gauge transformation that equals $(\cdot-\lambda_1)^{-2x_1}g_\mathrm{st}(\cdot-\lambda_1)^{-2x_1}$ near $\lambda_1$ and $(\cdot-\lambda_1)^{-2x_N}$ near $\lambda_N$ then transforms this to a solution that together with $v_\mathrm{st}$ constitutes a $k$-Nahm complex in normal form. This construction yields an inverse for the above map on equivalence classes.
\end{proof}
\end{proposition}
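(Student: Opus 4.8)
The plan is to adapt Donaldson's argument from \cite[Section 3]{donaldson1984}, building the correspondence by ``freezing'' the complex structure at the left endpoint, starting from a normal-form representative supplied by Proposition \ref{normal form}.

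\emph{Forward map.} Given a $k$-Nahm complex I may assume it is in normal form $((\alpha,\beta),v_{\mathrm{st}})$. Near each endpoint apply a singular gauge transformation of the form $z\mapsto z^{2x_i}$, which removes the first-order pole of $\alpha$ there and leaves $\beta$ holomorphic near $\lambda_1$, so I set $B=(g.\beta)(\lambda_1)\in\GL{m}{\C}$. Pushing the parallel transport solutions $u_{i+1}$ of Lemma \ref{parallel transport solutions} to $\lambda_1$ along the flow of the gauged $\alpha$ --- well posed because that matrix is upper triangular and holomorphic there --- I obtain $w=(e_1^T,(gu_2)(\lambda_1),\dots,(gu_N)(\lambda_1))$. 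That $w_1^T$ is cyclic for $B^T$ and that $\det M(k)\neq0$ are inherited directly from conditions (iii) and (iv) of Definition \ref{def k nahm complex}. For the remaining vanishing determinants I compute the block form of $B$ in the basis $(w_N,\dots,B^{k_n-1}w_N,\dots,w_2,\dots,B^{k_1-1}w_2)$: diagonal blocks are companion-type and off-diagonal blocks have only their last column nonzero, with that column having vanishing bottom entries when $i>j$; this produces a full zero row in $M(k+l(e_i-e_j))$ for $l\in\{1,\dots,k_j-k_i\}$, forcing the determinant to vanish. The assignment descends to equivalence classes: a second normal-form representative equivalent via $g'\in\mathcal{G}$ must satisfy $g'(\lambda_i)=\idmat$ since $v'=v_{\mathrm{st}}$, hence $B'=B$, and by the uniqueness in Lemma \ref{parallel transport solutions} we get $g'u_{i+1}=u'_{i+1}$, so $w'=w$.

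\emph{Inverse map.} Given $(B,w)$ satisfying (i) and (ii), cyclicity of $w_1^T$ for $B^T$ produces $g_1\in\GL{m}{\C}$ carrying $B$ to transposed companion form and $w_1$ to $e_1^T$, while $\det M(k)\neq0$ makes $(w_N,\dots,B^{k_n-1}w_N,\dots,w_2,\dots,B^{k_1-1}w_2)$ a basis. An iterated minor computation shows the vanishing conditions in (ii) are exactly what forces the off-diagonal blocks of $B$ in this basis into the required shape --- each step $l\mapsto l+1$ peeling off the entry $C_{(m_{N-j}-(l+1))i}$ as an $(l+1)$-st power from $\det M(k+(l+1)(e_i-e_j))$ --- and I let $g_N$ record this change of basis. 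Then $(0,B)$ solves the complex equation on $\lambda$; applying the element of $\mathcal{G}$ that is constant $g_qg_1$ near $\lambda_1$ and constant $g_N$ near $\lambda_N$ (with $g_q$ the Toeplitz-type matrix mediating between companion form and the normal-form $\beta$), followed by a singular gauge transformation built from $g_{\mathrm{st}}$ and the powers $z^{-2x_1}$, $z^{-2x_N}$ near the two endpoints, yields, together with $v_{\mathrm{st}}$, a normal-form $k$-Nahm complex. Checking both compositions shows this inverts the forward map on equivalence classes.

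\emph{Main obstacle.} The crux is the analysis at $\lambda_N$. One must show that condition (ii) is \emph{equivalent} to --- not merely sufficient for --- the vanishing of the relevant entries of $C$ in the off-diagonal blocks, which needs the iterated determinant identity and a check that no further vanishing is forced. Interlocked with this is the verification that the singular gauge $z^{2x_N}$ genuinely converts the half-power $O(1)$ off-diagonal blocks in the $k$-Nahm complex boundary conditions into holomorphic data near the endpoint, and conversely that reintroducing the pole from the constant data $(0,B)$ regenerates precisely those half-powers; here the half powers, absent from Donaldson's and Hurtubise's settings, make the bookkeeping delicate. The remaining steps --- the dictionary between cyclicity and companion form and between $\det M(k)\neq0$ and the basis property, and the uniqueness argument --- are routine once the block structures near both endpoints are pinned down.
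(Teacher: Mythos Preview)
Your proposal is correct and follows essentially the same approach as the paper: the same forward map via the normal form and the singular gauge $z^{2x_i}$, the same parallel-transport definition of $w$ through Lemma \ref{parallel transport solutions}, the same well-definedness argument via $g'(\lambda_i)=\idmat$ and uniqueness, and the same inverse construction using the two changes of basis $g_1,g_N$ together with the iterated determinant identity $\det M(k+(l+1)(e_i-e_j))=\pm C_{(m_{N-j}-(l+1))i}^{\,l+1}$. Your identification of the main obstacle---the equivalence between condition (ii) and the vanishing pattern in the $(i,j)$-blocks for $i>j$, and the half-power bookkeeping at $\lambda_N$---matches precisely where the paper expends its effort.
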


Donaldson defined for a rational map from $\CP{1}$ to $\CP{1}$ what it means to be \emph{based}. This definition needs to be extended to include rational maps from $\CP{1}$ to $\CP{n}$ to suit the present situation before formulating an analogous correspondence to rational maps. Let $\zeta$ be the standard coordinate on $\CP{1}\setminus\{\infty\}$ with $\infty=[1,0]$ and consider degree $m$ rational maps $F\in\rat{m}{\CP{1}}{\CP{n}}$. Let $E$ be the trivial rank $N$ bundle over $\CP{1}$ and denote by $L(F)$ the line subbundle corresponding to $F$. Further, consider the exact sequence
\[
\begin{tikzcd}
0\arrow{r}{}&L(F)\arrow{r}{}&E\arrow{r}{}&\faktor{E}{L(F)}\arrow{r}{}&0
\end{tikzcd}\comma
\]
where, by the Birkhoff-Grothendieck theorem,
\begin{align*}
\faktor{E}{L(F)}\cong\bigoplus_{i=1}^n\mathcal{O}(\ell_n)
\end{align*}
for $\ell=(\ell_1,\dots,\ell_n\}\in\Z^n$ with $\ell_1\geq\dots\geq\ell_n$ and $\ell$ being called the \emph{holomorphic charge} of $F$, as in \cite[p. 663]{murray1989}. Denote by $\rat{\ell}{\CP{1}}{\CP{n}}$ the subset of $\rat{m}{\CP{1}}{\CP{n}}$ of all rational maps with holomorphic charge $\ell$. Note that $\deg(F)=m$ implies that $\deg(L(F))=-m$, so that $\deg(E/L(F))=m$ and
\begin{align*}
\sum_{i=1}^n\ell_i=m\fullstop
\end{align*}
From the long exact sequence in cohomology of the above sequence one obtains
\[
\begin{tikzcd}[row sep=tiny]
0\arrow{r}{}&H^0(\CP{1},L(F))\arrow{r}{}&H^0(\CP{1},E)\arrow{r}{}&H^0(\CP{1},E/L(F))\arrow{r}{}&\dots\\
\dots\arrow{r}{}&H^1(\CP{1},L(F))\arrow{r}{}&H^1(\CP{1},E)\arrow{r}{}&H^1(\CP{1},E/L(F))\arrow{r}{}&0
\end{tikzcd}\comma
\]
which by the Riemann-Roch theorem gives $H^1(\CP{1},E/L(F))=0$ and $h^0(\CP{1},E/L(F))=m+n$. This is only possible if all $\ell_i$ are non-negative. Considering now the dual short exact sequence
\[
\begin{tikzcd}
0\arrow{r}{}&\left(\faktor{E}{L(F)}\right)^*\arrow{r}{}&E^*\arrow{r}{}&L(F)^*\arrow{r}{}&0
\end{tikzcd}
\]
one observes that $(E/L(F))^*$ is a subbundle of the trivial bundle $E^*$. Fix a global ordered basis $(e_1,\dots,e_N)$ of $E^*$ and define the \emph{anti-standard flag} of bundles as
\begin{align*}
0\subset\langle e_N\rangle\subset\dots\subset\langle e_{N+1-i},\dots,e_N\rangle\subset\dots\subset\langle e_2,\dots,e_N\rangle\subset E\fullstop
\end{align*}

\begin{definition}\label{definition based}
$F\in\rat{m}{\CP{1}}{\CP{n}}$ is called \emph{based} if $F(\infty)=[1,0,\dots,0]$ and $(E/L(F))^*\cong\bigoplus_{i=1}^n\mathcal{O}(-\ell_i)$ admits subbundles $\bigoplus_{i=0}^j\mathcal{O}(-\ell_{n-i})$ such that
\begin{align*}
0\subset\mathcal{O}(-\ell_n)\subset\bigoplus_{i=0}^1\mathcal{O}(-\ell_{n-i})\subset\dots\subset\bigoplus_{i=0}^{n-2}\mathcal{O}(-\ell_{n-i})\subset\left(\faktor{E}{L(F)}\right)^*\subset E
\end{align*}
agrees with the anti-standard flag at $\infty$. Denote the subset of $\rat{\ell}{\CP{1}}{\CP{n}}$ of all based rational maps by $R_\ell$.
\end{definition}

With this generalization of the notion of based rational maps, we can formulate the following proposition, which establishes the desired correspondence to rational maps missing to obtain the main theorem.

\begin{proposition}\label{matrix to rational map}
The assignment of the rational map
\begin{align*}
(B,w)\mapsto F=[1,f_1,\dots,f_n]\in R_k\comma\quad f_i(z)=w_1(z\idmat-B)^{-1}w_{i+1}\in\C\cup\{\infty\}\comma
\end{align*}
induces a bijective correspondence between equivalence classes of pairs $(B,w)$ as in proposition \ref{nahm complexes and matrices} (b) and based rational maps of holomorphic charge $k$.
\end{proposition}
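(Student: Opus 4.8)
The plan is to treat this as the $\CP{n}$-analogue of Donaldson's rational-map construction, so I first need to check that the assignment is well-defined, then show it is injective, then surjective, and finally that it lands in the based rational maps of the correct holomorphic charge. For well-definedness, note that conjugating $(B,w)$ by $g\in\GL{m}{\C}$ replaces $w_1(z\idmat-B)^{-1}w_{i+1}$ by $w_1 g^{-1}(z\idmat-gBg^{-1})^{-1}gw_{i+1}=w_1(z\idmat-B)^{-1}w_{i+1}$, so each $f_i$ is a $\GL{m}{\C}$-invariant of the pair; hence $F$ descends to equivalence classes. Since $w_1^T$ is cyclic for $B^T$, the map $z\mapsto w_1(z\idmat-B)^{-1}$ is not identically zero, and $\det(M(k))\neq0$ ensures the $w_{i+1}$ are ``independent enough'' that $F$ is genuinely $\CP{n}$-valued (not contained in a hyperplane), so $F\in\rat{m}{\CP{1}}{\CP{n}}$; that $F(\infty)=[1,0,\dots,0]$ is immediate since $(z\idmat-B)^{-1}\to0$ as $z\to\infty$, so each $f_i(z)\to0$.

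The degree and holomorphic charge computation is the conceptual heart. Writing $f_i=p_i/q$ with $q(z)=\det(z\idmat-B)$ the characteristic polynomial (degree $m$) and $p_i(z)=w_1\,\adj(z\idmat-B)\,w_{i+1}$, the map $F$ has a pole structure governed by $q$; the cyclicity of $w_1^T$ for $B^T$ forces $q$ to be exactly the minimal polynomial, so $\deg F=m$ and $L(F)\cong\mathcal O(-m)$. For the holomorphic charge I would identify $H^0(\CP{1},(E/L(F))^*)$ with a space built from $B$ and the $w_{i+1}$: a section of $(E/L(F))^*\subset E^*$ is a vector of polynomials annihilating $L(F)$, and the condition $\res_{\lambda_N}(\beta)^{k_i}v_{i+1}=0$ on the Nahm side translated to the matrix side as $\det(M(k))\neq0$ together with $\det(M(k+l(e_i-e_j)))=0$ for $1\le l\le k_j-k_i$, $i>j$, is precisely the condition that controls the splitting type of $(E/L(F))^*$. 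Concretely, $M(k)$ invertible means the $m$ vectors $B^{a}w_{i+1}$, $0\le a<k_i$, form a basis of $\C^m$; the vanishing determinants pin down, for each pair, how the ``Krylov filtration'' interacts, and an argument in the style of \cite{murray1989} or \cite{hurtubise1989} shows the graded pieces of $E/L(F)$ have degrees $k_1\ge\dots\ge k_n$. The flag $\bigoplus_{i=0}^j\mathcal O(-k_{n-i})\subset (E/L(F))^*$ built from the spans of $w_N,\dots,w_{j+2}$ (and their $B$-orbits) then agrees with the anti-standard flag at $\infty$ because, again, the resolvent vanishes at $\infty$; this gives $F\in R_k$.

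For injectivity, suppose $(B,w)$ and $(B',w')$ give the same $F=[1,f_1,\dots,f_n]$. Then $q=q'$ (common denominator, forced to be the characteristic polynomial via cyclicity) and $p_i=p_i'$ for all $i$. Using that $w_1^T$ is cyclic for $B^T$, one reconstructs $B$ up to conjugacy from $q$ together with the sequence $w_1 B^a w_{i+1}$ (the Laurent expansion of $f_i$ at $\infty$ reads off exactly these numbers): put $B$ in the companion form relative to the cyclic vector $w_1^T$, i.e. the normal form already appearing in the proof of Proposition \ref{nahm complexes and matrices}, and then the $w_{i+1}$ are determined by the moments $w_1 B^a w_{i+1}$ since $\{(w_1 B^a)^T\}_{a=0}^{m-1}$ is a basis; hence $(B,w)\sim(B',w')$. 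For surjectivity, given a based $F$ of holomorphic charge $k$, reverse the construction: from the $n+1$ polynomials presenting $F$ and the common denominator $q$ of degree $m$, let $B$ be the companion matrix of $q$, let $w_1^T$ be the associated cyclic covector, and define $w_{i+1}$ by requiring $w_1(z\idmat-B)^{-1}w_{i+1}=f_i$; solvability is exactly the statement that the moment map from $w_{i+1}$ to the coefficient vector of $p_i$ is invertible, which holds because $\{(w_1 B^a)^T\}$ is a basis. It remains to verify that this $(B,w)$ satisfies (i) and (ii) of Proposition \ref{nahm complexes and matrices}: (i) is built in, and (ii) — that $\det(M(k))\neq0$ and the lower determinants vanish — is where ``$F$ is based of holomorphic charge exactly $k$'' is used, reversing the cohomological argument of the previous paragraph. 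The main obstacle I anticipate is precisely this dictionary between the vanishing/non-vanishing pattern of the determinants $\det(M(k+l(e_i-e_j)))$ and the Birkhoff--Grothendieck splitting type of $(E/L(F))^*$ together with the flag condition at $\infty$; making that correspondence exact (both directions) is the delicate combinatorial-cohomological step, whereas the companion-matrix bookkeeping for injectivity and the moment-map linear algebra for surjectivity are routine once set up.
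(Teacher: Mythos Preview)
Your outline is broadly sound and matches the paper's strategy at the level of ``well-definedness via conjugation invariance, injectivity via moments in the companion normal form, surjectivity by reconstructing $(B,w)$ from the polynomial data of $F$''. Two points deserve comment.

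First, there is a genuine gap in your degree argument. You write that cyclicity of $w_1^T$ for $B^T$ forces $q$ to be the minimal polynomial and hence $\deg F=m$. Cyclicity does give that the characteristic polynomial of $B$ equals its minimal polynomial, but this alone does \emph{not} imply $\gcd(q,p_1,\dots,p_n)=1$: nothing prevents all $p_i$ from vanishing at some root of $q$. The paper closes this gap by an explicit computation: expanding $\sum_i t_i P_i/Q$ as a Laurent series at $\infty$ and multiplying by a putative common factor $\tilde Q$, one obtains an expression whose negative-power part is governed by $M(k)$; invertibility of $M(k)$ together with cyclicity of $w_1$ then lets one choose the $t_i$ so that the result is \emph{not} a polynomial, contradicting the existence of a common factor. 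So $\det(M(k))\neq 0$ is what delivers $\deg F=m$, not cyclicity alone.

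Second, your treatment of the holomorphic charge is where the approaches diverge. You propose a direct cohomological argument identifying sections of $(E/L(F))^*$ with Krylov-type data for $(B,w_{i+1})$. The paper instead routes everything through the intermediate characterization of $R_k$ by the polynomial conditions $A_\ell(F)$ (Lemma \ref{characterization of rk}): via the same Laurent-series manipulation as above, one shows that $sQ+\sum_i t_iP_i=0$ with $\deg t_i<\ell_i$ is equivalent to the vector of coefficients of the $t_i$ lying in $\ker M(\ell)$. Thus $A_k(F)\Leftrightarrow\det(M(k))\neq 0$ and $\neg A_{k+l(e_i-e_j)}(F)\Leftrightarrow\det(M(k+l(e_i-e_j)))=0$, and Lemma \ref{characterization of rk} converts this package into $F\in R_k$. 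This is more computational but entirely explicit, and it makes the ``dictionary'' you flag as the main obstacle into a single linear-algebra identity rather than a cohomological matching. Your inverse construction via the companion matrix is equivalent to the paper's, which phrases it as multiplication by $t$ on $\C[t]/\langle Q\rangle$ equipped with the residue pairing; the residue-theorem check that the reconstructed map equals $F$ is the one nontrivial verification you would still need to supply.
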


Before proving this proposition, it is worth reformulating for $F\in\rat{m}{\CP{1}}{\CP{n}}$ with $F(\infty)=\langle e_1\rangle|_\infty$ the condition $F\in R_k$ in a way better suited for computations. We start with the following characterization of negative degree line subbundles of $(E/L(F))^*$.

\begin{lemma}\label{line subbundles generalization}
For $F\in\rat{m}{\CP{1}}{\CP{n}}$ with $F(\infty)=\langle e_1\rangle|_\infty$ and $d>0$ there is a bijective correspondence between
\begin{enumerate}[label=(\roman*), itemsep=0mm]
\item degree $-d$ line subbundles of $(E/L(F))^*$ and
\item $\C^*$ equivalence classes of $(n+1)$-tuples $(s,t_1,\dots,t_n)$ of polynomials such that
\begin{enumerate}[label=(\alph*), itemsep=0mm]
\item $\deg(\gcd(s,t_1,\dots,t_n))=0$,
\item $\max(\deg(t_1),\dots,\deg(t_n))=d$, and
\item $sQ+\sum_{i=1}^nt_iP_i=0$,
\end{enumerate}
where $F$ is given on $\CP{1}\setminus\{\infty\}$ by $[(Q,P_1,\dots,P_n)\circ\zeta]$ for $(Q,P_1,\dots,P_n)$ an $n$-tuple of polynomials.
\end{enumerate}
\begin{proof}
A line subbundle $L$ of degree $-d$ of $(E/L(F))^*$ is determined by a non-vanishing meromorphic section with the only pole located at $\infty$ and of order $d$. Recalling the exact sequence
\[
\begin{tikzcd}
0\arrow{r}{}&\left(\faktor{E}{L(F)}\right)^*\arrow{r}{}&E^*\arrow{r}{}&L(F)^*\arrow{r}{}&0
\end{tikzcd}\comma
\]
such a meromorphic section is given on $\CP{1}\setminus\{\infty\}$ by an $N$-tuple $(s,t_1,\dots,t_n)$ of polynomials with $\deg(\gcd(s,t_1,\dots,t_n))=0$ and maximal degree $d$. Since $(E/L(F))^*$ is the kernel of the dual map to the inclusion $L(F)\hookrightarrow E$, $L$ being a subbundle of $(E/L(F))^*$ implies that
\begin{align*}
sQ+\sum_{i=1}^nt_iP_i=0\fullstop
\end{align*}
As $F(\infty)=\langle e_1\rangle|_\infty$ implies that $\deg(Q)>\deg(P_i)$, it follows that $\deg(s)<d$. Conversely, an $N$-tuple of polynomials as in (ii) defines a degree $-d$ line subbundle of $(E/L(F))^*$ by observing that multiplying $\zeta^{-d}$ gives a non-vanishing triple of polynomials in $\zeta^{-1}$.
\end{proof}
\end{lemma}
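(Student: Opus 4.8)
The plan is to realize $V:=\left(\faktor{E}{L(F)}\right)^*$ concretely as a subbundle of the trivial bundle $E^*$ and thereby translate the notion of a degree $-d$ line subbundle into the data of a single canonical meromorphic section. Recall from the dual exact sequence that $V$ is the kernel of the map $E^*\to L(F)^*$ dual to $L(F)\hookrightarrow E$; evaluating a covector $(s,t_1,\dots,t_n)$ on the generator $(Q,P_1,\dots,P_n)$ of $L(F)$ shows that a meromorphic section of $E^*$ takes values in $V$ precisely when the syzygy~(c), namely $sQ+\sum_{i=1}^nt_iP_i=0$, holds. Thus the $(n+1)$-tuples satisfying~(c) are exactly the meromorphic sections of $V$, and the lemma becomes a statement comparing line subbundles of $V$ with such sections.

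For the implication (i)$\Rightarrow$(ii), I would start from a line subbundle $L\subset V$ with $L\cong\mathcal{O}(-d)$ and first single out a canonical section. Since the sheaf of sections of $L$ with a pole of order at most $d$ at $\infty$ is $L(d[\infty])\cong\mathcal{O}$, its space of global sections is one-dimensional, so up to a $\C^*$ scalar there is a unique meromorphic section $\sigma$ of $L$ with divisor exactly $-d[\infty]$: holomorphic and nowhere vanishing on $\C$, with a pole of order exactly $d$ at $\infty$. Reading $\sigma$ through $L\subset V\subset E^*$, and using that it is holomorphic on $\C$ with at worst a pole at $\infty$, expresses it as a tuple $(s,t_1,\dots,t_n)$ of polynomials. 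That $\sigma$ is nowhere vanishing on $\C$ as a generator of the subbundle $L$ says the entries share no root in $\C$, which is~(a); membership in $V$ is the syzygy~(c); and the pole order $d$ records $\max(\deg s,\deg t_1,\dots,\deg t_n)=d$.

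Refining this last equality to~(b) --- that the maximum is already attained among the $t_i$ --- is where the based hypothesis enters, and I expect this degree bookkeeping to be the main obstacle. The condition $F(\infty)=\langle e_1\rangle|_\infty$ forces $\deg Q>\deg P_i$ for every $i$, and substituting $sQ=-\sum_i t_iP_i$ gives $\deg s+\deg Q\le\left(\max_i\deg t_i\right)+\deg Q-1$, hence $\deg s<\max_i\deg t_i$. Together with the pole order this yields $\max_i\deg t_i=d$ and $\deg s<d$, which is~(b). The very same inequality is what I would use in the reverse direction to guarantee that the line subbundle produced from a tuple has degree exactly $-d$ rather than something larger.

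For (ii)$\Rightarrow$(i), a tuple satisfying (a)--(c) is, by~(c), a meromorphic section of $V$ that is polynomial on $\C$ and so has its only pole at $\infty$. I would let $L$ be the line subbundle of $V$ it generates: over $\C$ this is the span of $(s,t_1,\dots,t_n)$, which by~(a) is nowhere zero and hence a genuine subbundle there, while near $\infty$, passing to $\eta=\zeta^{-1}$ and multiplying by $\eta^{d}$ clears the pole, and since $\max_i\deg t_i=d$ the resulting tuple is holomorphic and nonvanishing at $\eta=0$, extending the line across $\infty$ with divisor $-d[\infty]$, so that $L\cong\mathcal{O}(-d)$. It then remains to check the two assignments are mutually inverse: the canonical section attached to the $L$ built from a tuple is that tuple up to scale, by the uniqueness in the one-dimensionality step, and conversely the tuple extracted from a given $L$ regenerates it; this is exactly what forces the correspondence to be bijective only after quotienting the tuples by $\C^*$.
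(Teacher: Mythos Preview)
Your proof is correct and follows essentially the same route as the paper: identify $(E/L(F))^*$ as the kernel of the dual of $L(F)\hookrightarrow E$, represent a degree $-d$ line subbundle by its unique (up to $\C^*$) meromorphic section with divisor $-d[\infty]$, read off polynomials on $\C$, and use $\deg Q>\deg P_i$ from the basing condition to force $\deg s<\max_i\deg t_i$; for the converse, clear the pole by $\zeta^{-d}$. Your write-up is in fact more explicit than the paper's about the $\C^*$-ambiguity and the mutual inverse check.
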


\begin{corollary}\label{cor line subbundles}
If $(E/L(F))^*\cong\bigoplus_{i=1}^n\mathcal{O}(-\ell_i)$ and $(s^i,t_1^i,\dots,t_n^i)$ corresponds to a summand $\mathcal{O}(-\ell_i)$, then for every degree $-d<0$ line subbundle $L$ of $(E/L(F))^*$ there exist polynomials $\lambda_i$ with $\deg(\lambda_i)=d-\ell_i$ such that $L$ corresponds to $\sum_{i=1}^n\lambda_i(s^i,t_1^i,\dots,t_n^i)$.\qed
\end{corollary}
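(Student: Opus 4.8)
The plan is to pass from the combinatorial description of line subbundles furnished by Lemma~\ref{line subbundles generalization} to the Birkhoff--Grothendieck splitting and simply read off the coefficients. By the proof of Lemma~\ref{line subbundles generalization}, a degree $-d$ line subbundle $L$ of $(E/L(F))^*$ is the same datum as a nowhere-vanishing meromorphic section $\sigma$ of $(E/L(F))^*$ whose only pole is at $\infty$ and is of order exactly $d$; under the inclusion $(E/L(F))^*\hookrightarrow E^*$ this $\sigma$ is precisely the polynomial tuple $(s,t_1,\dots,t_n)$, and ``pole of order $d$ at $\infty$'' records conditions (a) and (b). Such sections are exactly the global sections of the twist $(E/L(F))^*\otimes\mathcal{O}(d)$ — here $\mathcal{O}(d)=\mathcal{O}(d\cdot\infty)$, the line bundle of functions with a pole of order at most $d$ at $\infty$ — whose pole at $\infty$ is of order exactly $d$. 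So it suffices to describe $H^0\big(\CP{1},(E/L(F))^*\otimes\mathcal{O}(d)\big)$ in terms of the given generators $\sigma_i=(s^i,t_1^i,\dots,t_n^i)$.

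First I would spell out what ``$(s^i,t_1^i,\dots,t_n^i)$ corresponds to the summand $\mathcal{O}(-\ell_i)$'' means: writing $M_i\subset(E/L(F))^*$ for the image of that summand, $\sigma_i$ is a meromorphic section of $M_i$ with a single pole, at $\infty$, of order exactly $\ell_i$; equivalently $\sigma_i$ is a nowhere-vanishing section of $M_i\otimes\mathcal{O}(\ell_i)\cong\mathcal{O}$, so $\sigma_i$ trivialises $M_i$ over $\CP{1}\setminus\{\infty\}$. Consequently any section of $M_i\otimes\mathcal{O}(d)$ is, on the affine chart, of the form $\lambda_i\sigma_i$ with $\lambda_i\in\C[\zeta]$, and it extends over $\infty$ with pole of order at most $d$ precisely when $\ell_i+\deg\lambda_i\le d$; since $\dim\{\lambda\in\C[\zeta]:\deg\lambda\le d-\ell_i\}=\max(0,d-\ell_i+1)=h^0(\mathcal{O}(d-\ell_i))=h^0(M_i\otimes\mathcal{O}(d))$, this accounts for every section, so $H^0(M_i\otimes\mathcal{O}(d))=\{\lambda_i\sigma_i:\deg\lambda_i\le d-\ell_i\}$. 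Summing over $i$ via $(E/L(F))^*=\bigoplus_iM_i$ gives
\begin{align*}
H^0\big(\CP{1},(E/L(F))^*\otimes\mathcal{O}(d)\big)=\Big\{\textstyle\sum_{i=1}^n\lambda_i\sigma_i:\lambda_i\in\C[\zeta],\ \deg\lambda_i\le d-\ell_i\Big\}\comma
\end{align*}
and the representation $\sigma=\sum_i\lambda_i\sigma_i$ is unique, since the $\sigma_i$ lie in distinct summands and are therefore $\C[\zeta]$-linearly independent as sections over the affine chart. Applying this to the section $\sigma$ attached to $L$ produces the $\lambda_i$; tracking the pole order at $\infty$, which is exactly $d$, forces $\max_i(\ell_i+\deg\lambda_i)=d$, i.e. $\deg\lambda_i=d-\ell_i$ for the relevant indices (and $\deg\lambda_i\le d-\ell_i$ throughout, with the convention that a negative bound means $\lambda_i=0$, as must happen whenever $d<\ell_i$).

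The step I expect to demand the most care is the identification in the second paragraph: checking that the Birkhoff--Grothendieck summand $\mathcal{O}(-\ell_i)$, viewed as the subbundle $M_i$ of the trivial bundle $E^*$, is generated over the affine chart by the polynomial tuple $\sigma_i$ with pole of order \emph{exactly} $\ell_i$ at $\infty$ — this is the bridge between the holomorphic-charge bookkeeping and the degree conditions of Lemma~\ref{line subbundles generalization}, and it is also where one must be attentive to the difference between ``$\deg\lambda_i\le d-\ell_i$'' and the asserted equality, and to the degenerate indices with $d-\ell_i<0$. Everything else is the dimension count above together with the direct-sum decomposition, both routine once that identification is in hand.
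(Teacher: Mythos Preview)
Your argument is correct and is precisely the natural unpacking the paper has in mind: the corollary is stated with a bare \qed\ and no proof, so the paper is treating it as immediate from Lemma~\ref{line subbundles generalization} together with the Birkhoff--Grothendieck splitting, which is exactly the route you take. Your caveat about the inequality $\deg\lambda_i\le d-\ell_i$ versus the asserted equality is well observed; the paper's subsequent use of the corollary (in the proof of Lemma~\ref{characterization of rk}) indeed reads ``$\deg(\lambda_i)=d-\ell_i$'' in the weak sense you describe, with negative values forcing $\lambda_i=0$.
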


For ease of notation we now consider for $F\in\rat{m}{\CP{1}}{\CP{n}}$ with $F(\infty)=\langle e_1\rangle|_\infty$ the following statement.
\begin{align}
  \tag{$A_{\ell}(F)$}
  \parbox{\dimexpr\linewidth-8em}{%
    \strut
    \emph{For all polynomials $s$ of degree at most $\max\setc{\ell_i}{i\in\{1,\dots,n\}}-2$ and for all non-zero $n$-tuples $(t_1,\dots ,t_n)$ of polynomials with $\deg(t_i)\leq\ell_i-1$ it holds that}
    $$
    sQ+\sum_{i=1}^nt_iP_i\neq 0\comma
    $$
    \emph{where $F$ is given on $\CP{1}\setminus\{\infty\}$ by $[(Q,P_1,\dots ,P_n)\circ\zeta]$ for $(Q,P_1,\dots ,P_n)$ an $(n+1)$-tuple of polynomials.}
    \strut
  } 
\end{align}

\noindent The aim is to characterize the condition for $F\in\rat{m}{\CP{1}}{\CP{n}}$ with $F(\infty)=\langle e_1\rangle|_\infty$ to be based in terms of such statements. More precisely do we want to show that this condition is precisely
\begin{align*}
A_k(F)\quad\land\quad\forall i>j:\forall l\in\{1,\dots,k_j-k_i\}:\neg A_{k+l(e_i-e_j)}(F)\comma
\end{align*}
which is of a similar form as
\begin{align*}
\det(M(k))\neq 0\quad\land\quad\forall i>j:\forall l\in\{1,\dots,k_j-k_i\}:\det(M(k+l(e_i-e_j)))=0\comma
\end{align*}
which is desireble in view of the correspondence we want to establish. This similarity will indeed be of relevance in the proof of proposition \ref{matrix to rational map}.

\begin{lemma}\label{characterization of rk}
Let $k=(k_1,\dots,k_n)$ with $k_1\geq\dots\geq k_n$ and let $F\in\rat{m}{\CP{1}}{\CP{n}}$ with $F(\infty)=\langle e_1\rangle|_\infty$. Then $F\in R_k$ iff
\begin{align*}
A_k(F)\quad\land\quad\forall i>j:\forall l\in\{1,\dots,k_i-k_j\}:\neg A_{k+l(e_j-e_i)}(F)\fullstop
\end{align*}
\begin{proof}
Let $F\in R_k$. Take a non-vanishing $(n+1)$-tuple $(s,t_1,\dots,t_n)$ of polynomials with
\begin{align*}
sQ+\sum_{i=1}^nt_iP_i=0\fullstop
\end{align*}
Then, according to lemma \ref{line subbundles generalization} and corollary \ref{cor line subbundles}, one has
\begin{align*}
(s,t_1,\dots,t_n)=\sum_{i=1}^n\lambda_i(s^i,t_1^i,\dots,t_n^i)
\end{align*}
with $\deg(\lambda_i)=d-k_i$, where $d=\max\{\deg(t_1),\dots,\deg(t_n)\}$. Thus, $d=k_i$ for some $i\in\{1,\dots,n\}$ and, further, $\lambda_j=0$ for $j<i$. Hence it follows $\deg(t_i)=k_i$, so that $A_k(F)$ follows.

Choosing the $(s^i,t_1^i,\dots,t_n^i)$ such that $\deg(t^i_j)<k_j$ for $j<i$, one obtains for $i>j$ and $l\in\{1,\dots,k_i-k_j\}$ that $\deg(t^j_j)=k_j<k_j+l$ and $\deg(t^j_i)<k_j\leq k_i-l$, so that $\neg A_{k+l(e_j-e_i)}(F)$.

Assume now
\begin{align*}
A_k(F)\quad\land\quad\forall i>j:\forall l\in\{1,\dots,k_i-k_j\}:\neg A_{k+l(e_j-e_i)}(F)\fullstop
\end{align*}
Then from $\neg A_{k+(e_i-e_1)}(F)$ for $i>i_0$, where $k_1$ has multiplicity $i_0$, one obtains an $(n+1)$-tuple $(s^i,t_1^i,\dots,t_n^i)$ of not simultaniously vanishing polynomials with
\begin{align*}
s^iQ+\sum_{j=1}^nt^i_jP_j=0\comma
\end{align*}
where $\deg(t^i_i)<k_i+1$, $\deg(t^i_1)<k_1-1$ and $\deg(t^i_j)<k_j$ for $j\notin\{1,i\}$. $A_k(F)$ then implies that $\deg(t^i_i)=k_i$. If $(\tilde s^i,\tilde t_1^i,\dots,\tilde t_n^i)$ corresponds to a summand $\mathcal{O}(-\ell_i)$ of $(E/L(F))^*\cong\bigoplus_{i=1}^n\mathcal{O}(-\ell_i)$, then
\begin{align*}
(s^i,t_1^i,\dots,t_n^i)=\sum_{i=1}^n\lambda^i_j(\tilde s^j,\tilde t_1^j,\dots,\tilde t_n^j)
\end{align*}
for polynomials $\lambda^i_j$ of degree $d_i-\ell_j$, where $d_i=\max\{\deg(t_1^i),\dots,\deg(t_n^i)\}$. But this implies that $\{k_{i_0+1},\dots,k_n\}\subset\{\ell_1,\dots,\ell_n\}$, and since $\deg(t^i_j)\leq k_j$ with equality only for $j=i$, $\{\ell_1,\dots,\ell_n\}\setminus\{k_{i_0+1},\dots,k_n\}$ has at most $i_0$ elements. If this set had in fact more than one element, there would be an index $i_1\in\{1,\dots,i_0\}$ such that $\ell_{i_1}<k_1$ and $\ell_{i_1}=\min\setc{\ell_i}{i\in\{1,\dots,i_0\}}$. But then
\begin{align*}
s^{i_1}Q+\sum_{j=1}^nt_j^{i_1}P_j=0
\end{align*}
would contradict $A_k(F)$, as $\max\{\deg(t^{i_1}_1),\dots,\deg(t^{i_1}_n)\}=\ell_{i_1}$ and $\deg(t^{i_1}_j)<k_j$ for $j>i_0$, possibly after subtracting appropriate polynomial multiples of the above $(s^i,t_1^i,\dots,t_n^i)$ for $i>i_0$. This implies that $\{\ell_1,\dots,\ell_n\}\setminus\{k_{i_0+1},\dots,k_n\}=\{k_1\}$, so that $F\in\rat{k}{\CP{1}}{\CP{n}}$. Linearly combining the tuples corresponding to line subbundles $\mathcal{O}(-k_1)$ of $(E/L(F))^*$ then yields the desired values at $\infty$, so that $F\in R_k$, as $(s^i,t_1^i,\dots,t_n^i)$ corresponds to $\mathcal{O}(-k_i)\subset(E/L(F))^*$ with $\mathcal{O}(-k_i)|_\infty=\langle e_{i+1}\rangle|_\infty$ for $i>i_0$.
\end{proof}
\end{lemma}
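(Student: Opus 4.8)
The plan is to reduce the geometric condition $F\in R_k$ to the purely algebraic syzygy statements $A_\ell(F)$ by means of Lemma \ref{line subbundles generalization} and Corollary \ref{cor line subbundles}, which identify degree $-d$ line subbundles of $(E/L(F))^*$ with $\C^*$-classes of tuples $(s,t_1,\dots,t_n)$ solving $sQ+\sum_i t_iP_i=0$ of gcd-degree $0$ and maximal degree $d$. Throughout I write $(\tilde s^i,\tilde t^i_1,\dots,\tilde t^i_n)$ for a minimal generator attached to the summand $\mathcal{O}(-\ell_i)$ of $(E/L(F))^*\cong\bigoplus_i\mathcal{O}(-\ell_i)$, so that by the corollary every syzygy is a polynomial combination $\sum_i\lambda_i(\tilde s^i,\dots)$ with $\deg\lambda_i=d-\ell_i$. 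The whole argument is then a matter of reading off degrees from such expansions.

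For the forward direction I assume $F\in R_k$, hence the charge is $\ell=k$. To obtain $A_k(F)$, take any nonzero syzygy, expand it as $\sum_i\lambda_i(\tilde s^i,\dots)$, and let $i$ be minimal with $\lambda_i\neq0$; then $\deg\lambda_i=d-k_i\ge 0$ forces $d=k_i$ and $\deg t_i=k_i$, contradicting the bound $\deg t_i\le k_i-1$ built into $A_k$. For the family of negated statements I first normalize the generators triangularly, arranging $\deg\tilde t^i_j<k_j$ for $j<i$; then for each prescribed shift the generator $(\tilde s^j,\dots)$ itself fits within the enlarged degree bounds defining $A_{k+l(e_j-e_i)}$ (since raising the $j$-th slot to $k_j+l$ and lowering the $i$-th to $k_i-l$ leaves room for its degrees), whence $\neg A_{k+l(e_j-e_i)}(F)$.

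For the backward direction the hypotheses are used to reconstruct the charge. From the negated statements one extracts syzygies $(s^i,t^i_1,\dots,t^i_n)$; invoking $A_k(F)$ pins their leading degrees so that $\deg t^i_i=k_i$. Expanding each extracted syzygy in the true minimal generators via Corollary \ref{cor line subbundles} shows $\{k_{i_0+1},\dots,k_n\}\subseteq\{\ell_1,\dots,\ell_n\}$, where $k_1$ has multiplicity $i_0$, and a dimension count bounds the complementary multiset by $i_0$ elements. The decisive step is to exclude any value $\ell_{i_1}<k_1$ in the top block: such a value would yield, after subtracting suitable polynomial multiples of the lower-index syzygies, a syzygy all of whose degrees fall below the $A_k$ thresholds, contradicting $A_k(F)$. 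This forces the complementary multiset to be $\{k_1\}$ with the correct multiplicity, so $\ell=k$ and $F\in\rat{k}{\CP{1}}{\CP{n}}$. Finally the based condition follows by combining the top generators linearly so that the associated subbundle flag matches the anti-standard flag at $\infty$, i.e.\ $\mathcal{O}(-k_i)|_\infty=\langle e_{i+1}\rangle|_\infty$.

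I expect the backward direction to be the main obstacle, specifically the counting step that upgrades the containment $\{k_{i_0+1},\dots,k_n\}\subseteq\{\ell_1,\dots,\ell_n\}$ to the exact equality $\ell=k$. The delicate point is coordinating the triangular normalization of generators with the ``subtract appropriate multiples'' maneuver so that the resulting small syzygy genuinely contradicts $A_k(F)$; this requires careful bookkeeping of which degree inequalities are strict and of how the bound $\max_i\ell_i-2$ on $s$ interacts with the bounds $\ell_i-1$ on the $t_i$. By contrast, verifying the based condition at $\infty$ is comparatively routine once the charge has been identified.
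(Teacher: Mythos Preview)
Your proposal follows essentially the same approach as the paper's proof: both directions are reduced to degree bookkeeping on syzygies via Lemma~\ref{line subbundles generalization} and Corollary~\ref{cor line subbundles}, with the forward direction handled by expanding in generators and reading off the minimal nonzero coefficient (plus a triangular normalization of the generators for the negated statements), and the backward direction handled by extracting syzygies from the $\neg A_{k+l(e_j-e_i)}(F)$, pinning their leading degrees via $A_k(F)$, expanding in the true minimal generators to get the containment $\{k_{i_0+1},\dots,k_n\}\subseteq\{\ell_1,\dots,\ell_n\}$, and then excluding any $\ell_{i_1}<k_1$ in the top block by producing a small syzygy that contradicts $A_k(F)$. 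You have also correctly identified the delicate point as the subtraction step in the backward direction, which is exactly where the paper's argument is most compressed.
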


\begin{corollary}
$R_k$ is a quasi-affine algebraic variety of dimension $2\sum_{i=1}^nik_i$.
\begin{proof}
For $\ell\in\N^n$ with $\sum_{i=1}^n\ell_i=m$ and $\ell_\mathrm{max}=\max(\{\ell_1,\dots,\ell_n\})$, and polynomials $s,t_1,\dots,t_n$ with $\deg(s)<\ell_\mathrm{max}-1$ and $\deg(t_i)<\ell_i$ write $s(z)=\sum_{j=1}^{\ell_\mathrm{max}-2}s_jz^j$ and $t_i(z)=\sum_{j=1}^{\ell_i}t_{ij}z^j$. Then
\begin{align*}
&\left(sQ+\sum_{i=1}^nt_iP_i\right)(z)\\
&\quad=\left(\small\begin{matrix}s_{\ell_\mathrm{max}-2}\\\vdots\\s_0\\t_{1(\ell_1-1)}\\\vdots\\t_{10}\\\vdots\\t_{n(\ell_n-1)}\\\vdots\\t_{n0}\end{matrix}\right)^T\left(\small\begin{matrix}
1&q_{m-1}&\dots&q_0\\
&\ddots&\ddots&&\ddots\\
&&1&q_{m-1}&\dots&q_0\\
p_{1(m-1)}&\dots&p_{10}\\
&p_{1(m-1)}&\dots&p_{10}\\
&&\ddots&&\ddots\\
&&&p_{1(m-1)}&\dots&p_{10}\\
&&\vdots\\
&p_{n(m-1)}&\dots&p_{n0}\\
&&\ddots&&\ddots\\
&&&p_{n(m-1)}&\dots&p_{n0}
\end{matrix}\right)\left(\small\begin{matrix}z^{m+\ell_\mathrm{max}-2}\\\vdots\\z\\1\end{matrix}\right)\comma
\end{align*}
where $P_i(z)=\sum_{j=0}^{m-1}p_{ij}z^j$ and $Q(z)=\sum_{j=0}^mq_jz^j$ with $q_m=1$, and the respective blocks of the matrix are of hights $\ell_\mathrm{max}-1$ and $\ell_i$. Denoting this matrix by $\tilde M(\ell)$, $F=[Q,P_1,\dots,P_n]$ is in $R_k$ iff
\begin{align*}
\det(\tilde M(k))\neq 0\quad\land\quad\forall i>j:\forall l\in\{1,\dots,k_j-k_i\}:\det(\tilde M(k+l(e_i-e_j)))=0\comma
\end{align*}
so that $R_k$ can be identified with a subset of $\C^{N\cdot m}$ given by the vanishing of $\sum_{i<j}(k_i-k_j)$, which one inductively shows to be equal to $\sum_{i=1}^n(n+1-2i)k_i$, and the non-vanishing of one polynomial expression. Thus, as the polynomials are linearly independent, the dimension of $R_k$ is
\begin{align*}
(n+1)m-\sum_{i<j}(k_i-k_j)=(n+1)\sum_{i=1}^nk_i-\sum_{i=1}^n(n+1-2i)k_i=2\sum_{i=1}^nik_i\comma
\end{align*}
which concludes the proof.
\end{proof}
\end{corollary}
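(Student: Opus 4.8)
The plan is to realise $R_k$ as a locally closed subvariety of an affine space, so that quasi-affineness is immediate, and then to compute its dimension by counting the defining equations.

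For the variety structure: every $F\in R_k$ has $F(\infty)=[1,0,\dots,0]$, so it is represented by an $(n+1)$-tuple $[Q,P_1,\dots,P_n]$ with $Q$ monic of degree $m$, each $\deg P_i<m$, and $\gcd(Q,P_1,\dots,P_n)=1$; recording the $m$ lower-order coefficients of $Q$ and the $m$ coefficients of each $P_i$ identifies the set of such based degree-$m$ maps with a Zariski-open $U\subseteq\C^{(n+1)m}$, namely the complement of the proper closed locus where the $\gcd$ is nontrivial. By Lemma \ref{characterization of rk}, for $F\in U$ one has $F\in R_k$ iff $A_k(F)$ holds while $\neg A_{k+l(e_i-e_j)}(F)$ holds for all $i>j$ and all $l\in\{1,\dots,k_j-k_i\}$. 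Now $A_\ell(F)$ is precisely the injectivity of the linear map $(s,t_1,\dots,t_n)\mapsto sQ+\sum_i t_iP_i$ on polynomials with $\deg s\le\ell_{\max}-2$ and $\deg t_i\le\ell_i-1$; a coefficient count shows its source and target each have dimension $m+\ell_{\max}-1$, so its matrix $\tilde M(\ell)$ is square, and since a kernel vector with $(t_1,\dots,t_n)=0$ would force $s=0$ (as $Q\ne0$), injectivity is equivalent to $\det\tilde M(\ell)\ne0$. Hence $R_k$ is the subset of $U$ on which $\det\tilde M(k)\ne0$ while all $\det\tilde M(k+l(e_i-e_j))$ with $i>j$, $1\le l\le k_j-k_i$ vanish --- a locally closed subset of $\C^{(n+1)m}$, hence a quasi-affine variety.

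For the dimension, the number of equations just imposed is $\sum_{i>j}(k_j-k_i)=\sum_{i<j}(k_i-k_j)$, and regrouping this double sum (or a short induction on $n$) yields $\sum_{i<j}(k_i-k_j)=\sum_{i=1}^n(n+1-2i)k_i$, so that $(n+1)m-\sum_{i<j}(k_i-k_j)=\sum_{i=1}^n\bigl((n+1)-(n+1-2i)\bigr)k_i=2\sum_{i=1}^n ik_i$. As $R_k$ is nonempty --- say via the chain of correspondences back to $\mathcal{M}_k$, or by exhibiting one charge-$k$ based map directly --- and each component of a nonempty variety cut out in $U$ by $r$ polynomial equations has codimension at most $r$, we obtain $\dim R_k\ge2\sum_i ik_i$.

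The reverse inequality is the crux, and I expect it to be the main obstacle: one must verify that these $\sum_{i<j}(k_i-k_j)$ determinants form a regular sequence on $\{\det\tilde M(k)\ne0\}$, i.e. that $R_k$ is a local complete intersection of the expected codimension in $U$. The cleanest route is to transport the question through Propositions \ref{matrix to rational map} and \ref{nahm complexes and matrices} to the $(B,w)$-picture: there one uses the free $\GL{m}{\C}$-action together with $\det M(k)\ne0$ to put $B$ into its block form relative to the basis $(w_N,\dots,B^{k_1-1}w_2)$, so that $B$ is encoded by a matrix $C\in\mat{m\times n}{\C}$ while $w_1$ ranges over cyclic covectors for $B^T$; as in the proof of Proposition \ref{nahm complexes and matrices}, the remaining conditions $\det M(k+l(e_i-e_j))=0$ then become the vanishing of a prescribed set of entries of $C$ --- honest coordinate conditions --- and a count shows this prescribed set has exactly $\sum_{i<j}(k_i-k_j)$ elements. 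Thus $R_k$ is identified with an open subset of a coordinate subspace of $\mat{m\times n}{\C}\times(\C^m)^*\cong\C^{(n+1)m}$ of dimension $(n+1)m-\sum_{i<j}(k_i-k_j)=2\sum_i ik_i$, which finishes the proof. The points needing care on this route are checking that these correspondences are genuine isomorphisms of varieties and carrying out the combinatorial count of forced-zero entries of $C$; a more hands-on but less structured alternative is to establish the complete-intersection property directly from the explicit block-shift (``staircase'') shape of the matrices $\tilde M(\ell)$, by exhibiting a point of $R_k$ at which the Jacobian of the family of determinants has maximal rank.
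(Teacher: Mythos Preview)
Your setup coincides with the paper's: you encode $F$ by $(Q,P_1,\dots,P_n)\in\C^{(n+1)m}$, rewrite $A_\ell(F)$ as $\det\tilde M(\ell)\ne0$ for a square resultant-type matrix, and invoke Lemma~\ref{characterization of rk} to describe $R_k$ as a locally closed subset of affine space. This is exactly the paper's argument for quasi-affineness and for the count $\sum_{i<j}(k_i-k_j)=\sum_i(n+1-2i)k_i$ of defining equations.

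Where you differ is in the dimension step. The paper simply asserts that the defining polynomials are ``linearly independent'' and concludes the dimension equals $(n+1)m$ minus their number. You correctly recognise that mere linear independence of the polynomials does not by itself give the codimension, separate the easy lower bound $\dim R_k\ge 2\sum_i ik_i$ from the hard upper bound, and propose to establish the latter by passing to the $(B,w)$-picture of Proposition~\ref{nahm complexes and matrices}, where the determinant conditions $\det M(k+l(e_i-e_j))=0$ are shown in that proof to be equivalent to the vanishing of specific entries of $C$, hence to honest coordinate conditions. This is a genuine strengthening of the paper's argument, and it works: after normalising via $\det M(k)\ne0$, the data reduce to a matrix $C\in\mat{m\times n}{\C}$ with a prescribed set of $\sum_{i<j}(k_i-k_j)$ entries set to zero, together with a cyclic covector $w_1$, giving an open subset of an affine space of the claimed dimension. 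Your caveats are apt: one needs the passage between the rational-map and $(B,w)$ descriptions to be algebraic (Proposition~\ref{matrix to rational map} as an isomorphism of varieties, not merely a bijection), and one must check that the number of forced-zero entries of $C$ is exactly $\sum_{i<j}(k_i-k_j)$; both are routine from the explicit formulas in the proof of Proposition~\ref{nahm complexes and matrices}.
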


\begin{remark}
$R_{(1,\dots,1)}\subset\rat{n}{\CP{1}}{\CP{n}}$ is precisely the subset of full rational maps $F$ with $F(\infty)=\langle e_1\rangle|_\infty$.
\end{remark}

With this characterization of $R_k$ we can now prove proposition \ref{matrix to rational map}, the last missing piece in the proof of the main theorem.

\begin{proof}[Proof of proposition \ref{matrix to rational map}]
Let $(B,w)$ be a pair as in proposition \ref{nahm complexes and matrices} (b) such that
\begin{align*}
(w_1^T,\dots,(w_1B^{m-1})^T)=\idmat
\end{align*}
and let $F=[1,f_1,\dots,f_n]$ with $f_i(z)=e_1^T(z\idmat-B)^{-1}w_{i+1}$. Then $f_i=P_i/Q$ for polynomials $P_i$ and $Q$ given by
\begin{align*}
Q(z)=\det(z\idmat-B)\comma&&P_i(z)=e_1^T\adj(z\idmat-B)w_{i+1}\comma
\end{align*}
where $\deg(Q)=m$ and $\deg(P_i)<m$. Observe that for $\abs{z}$ large enough one has
\begin{align*}
f_i(z)=e_1^T(z\idmat-B)^{-1}w_{i+1}=\sum_{j=0}^\infty e_1^TB^jw_{i+1}z^{-j-1}
\end{align*}
Take now polynomials $t_i$ with $t_i(z)=\sum_{j=0}^{\ell_i-1}t_{ij}z^j$ for $\ell\in\N^n$ with $\sum_{i=1}^n\ell_i=m$ and compute for $\abs{z}$ large
\begin{align*}
&\left(\sum_{i=1}^nt_i\frac{P_i}{Q}\right)(z)=\sum_{i=0}^{\ell_1-1}\sum_{j=0}^{\infty}t_{1i}e_1^TB^jw_{2}z^{i-j-1}+\dots+\sum_{i=0}^{\ell_n-1}\sum_{j=0}^{\infty}t_{ni}e_1^TB^jw_{N}z^{i-j-1}\\
&\quad=O(1)+\sum_{j=0}^{\infty}\left(\sum_{i=0}^{\ell_1-1}t_{1i}e_1^TB^{i+j}w_{2}+\dots+\sum_{i=0}^{\ell_n-1}t_{ni}e_1^TB^{i+j}w_{N}\right)z^{-j-1}\fullstop
%&=O(1)+\sum_{j=0}^{m-1}\left(\sum_{i=0}^{\ell_1-1}t_{1i}(B^{i}w_{11})_{j+1}+\sum_{i=0}^{\ell_2-1}t_{2i}(B^{i}w_{12})_{j+1}\right)z^{-j-1}\\
%&=O(1)+\left(\begin{matrix}z^{-1}&\dots&z^{-m}\end{matrix}\right)M(\ell)\left(\begin{matrix}t_{11}\\\vdots\\t_{1(\ell_1-1)}\\t_{12}\\\vdots\\t_{2(\ell_2-1)}\end{matrix}\right)\comma
\end{align*}
If $Q$ and all $P_i$ had a common zero, there would exist a non-zero polynomial $\tilde Q$ with $\tilde Q(z)=\sum_{h=0}^{m-1}q_hz^h$ such that
\begin{align*}
&\left(\sum_{i=1}^nt_i\frac{P_i}{Q}\right)(z)\tilde Q(z)\\
&\quad=O(1)+\sum_{h=0}^{m-1}q_h\sum_{j=0}^{\infty}\left(\sum_{i=0}^{\ell_1-1}t_{1i}e_1^TB^{i+j}w_{2}+\dots+\sum_{i=0}^{\ell_n-1}t_{ni}e_1^TB^{i+j}w_{N}\right)z^{h-j-1}\\
&\quad=O(1)+\sum_{j=0}^\infty\sum_{h=0}^{m-1}q_h\left(\sum_{i=0}^{\ell_1-1}t_{1i}e_i^TB^{i+j+h}w_{2}+\dots+\sum_{i=0}^{\ell_n-1}t_{ni}e_1^TB^{i+j+h}w_{N}\right)z^{-j-1}\\
&\quad=O(1)+\left(\begin{matrix}z^{-1}&\dots&z^{-m}\end{matrix}\right)\left(\sum_{h=0}^{m-1}q_hB^h\right)M(\ell)\left(\begin{matrix}t_{11}\\\vdots\\t_{1(\ell_1-1)}\\\vdots\\t_{n1}\\\vdots\\t_{n(\ell_n-1)}\end{matrix}\right)\\
&\quad\phantom{{}={}}+\sum_{j=m}^\infty\sum_{h=0}^{m-1}q_h\left(\sum_{i=0}^{\ell_1-1}t_{1i}e_1^TB^{i+j+h}w_{2}+\dots+\sum_{i=0}^{\ell_n-1}t_{ni}e_1^TB^{i+j+h}w_{N}\right)z^{-j-1}
\end{align*}
is a polynomial in $z$. Since for $\ell=k$ one has $\det(M(k))\neq 0$ and since
\begin{align*}
e_1^T\left(\sum_{h=0}^{m-1}q_hB^h\right)=\sum_{h=0}^{m-1}q_he_{h+1}^T\neq 0\comma
\end{align*}
it is possible to choose $t_i$ such that this is not the case. This then shows that $Q$ and all $P_i$ have no common zeros, which yields $F\in\rat{m}{\CP{1}}{\CP{n}}$. In particular, for $\ell=k$
\begin{align*}
&\left(\sum_{i=1}^nt_i\frac{P_i}{Q}\right)(z)\\
&\quad=O(1)+\left(\begin{matrix}z^{-1}&\dots&z^{-m}\end{matrix}\right)M(k)\left(\begin{matrix}t_{11}\\\vdots\\t_{1(\ell_1-1)}\\\vdots\\t_{n1}\\\vdots\\t_{n(\ell_n-1)}\end{matrix}\right)\\
&\quad\phantom{{}={}}+\sum_{j=m}^\infty\sum_{h=0}^{m-1}q_h\left(\sum_{i=0}^{\ell_1-1}t_{1i}e_1^TB^{i+j+h}w_{2}+\dots+\sum_{i=0}^{\ell_n-1}t_{ni}e_1^TB^{i+j+h}w_{N}\right)z^{-j-1}
\end{align*}
is not a polynomial if not all $t_i$ are zero, so that for all non-zero $n$-tuples $(s,t_1,\dots,t_n)$ with $\deg(t_i)\leq k_i-1$ and $\deg(s)\leq k_1-2$ one has
\begin{align*}
sQ+\sum_{i=1}^nt_iP_i\neq 0\comma
\end{align*}
which yields $A_k(F)$. Further, compute
\begin{align*}
&\left(\sum_{i=1}^nt_i\frac{P_i}{Q}\right)(z)\\
&\quad=O(1)+\sum_{j=0}^{\infty}\left(\sum_{i=0}^{\ell_1-1}t_{1i}e_1^TB^{i+j}w_{2}+\dots+\sum_{i=0}^{\ell_n-1}t_{ni}e_1^TB^{i+j}w_{N}\right)z^{-j-1}\\
&\quad=O(1)+\sum_{h=0}^\infty\sum_{j=0}^{m-1}\left(\sum_{i=0}^{\ell_1-1}t_{1i}e_1^TB^{i+j+hm}w_{2}\right.\\
&\quad\phantom{=O(1)+\sum_{h=0}^\infty\sum_{j=0}^{m-1}\quad)}\left.+\dots+\sum_{i=0}^{\ell_n-1}t_{ni}e_1^TB^{i+j+hm}w_{N}\right)z^{-j-1-hm}\\
&\quad=O(1)+\sum_{h=0}^\infty\left(\begin{matrix}z^{-1}&\dots&z^{-m}\end{matrix}\right)\left(\frac{B}{z}\right)^{hm}M(\ell)\left(\begin{matrix}t_{11}\\\vdots\\t_{1(\ell_1-1)}\\\vdots\\t_{n1}\\\vdots\\t_{n(\ell_n-1)}\end{matrix}\right)\fullstop
%&\quad=\mathcal{O}(1)+\left(\begin{matrix}z^{-1}&\dots&z^{-m}\end{matrix}\right)\left(\idmat-\left(\frac{B}{z}\right)^{m}\right)^{-1}M(\ell)\left(\begin{matrix}t_{11}\\\vdots\\t_{1(\ell_1-1)}\\t_{21}\\\vdots\\t_{2(\ell_2-1)}\end{matrix}\right)\fullstop
\end{align*}
For $\ell=k+l(e_i-e_j)$ with $i>j$ and $l\in\{1,\dots,k_j-k_i\}$ the assumption gives $\det(M(\ell))=0$, so that choosing $t_i$ appropriately makes the right hand side a polynomial, denoted by $-s(z)$. One then obtains
\begin{align*}
sQ+\sum_{i=1}^nt_iP_i=0\comma
\end{align*}
which implies $\neg A_\ell(F)$. By lemma \ref{characterization of rk}, this shows $F\in R_k$.

To complete the proof, we give the inverse construction. Given $F\in R_k$, $F$ is of the form $[1,f_1,\dots,f_n]$ for $f_i=P_i/Q$ with $\deg(Q)=m$, $\deg(P_i)<m$, and $Q$ and all $P_i$ having no common zeros, as $F$ has degree $m$ and value $\langle e_1\rangle|_\infty$ at infinity. Let $V$ be the $m$-dimensional vector space $\C[t]/\langle Q(t)\rangle$. Define a bilinear form on $V$ via
\begin{align*}
(\pi,\sigma)=\sum_{\nu\in Z(Q)}\res_{\nu}\left(\frac{\pi\sigma}{Q(t)}\dv t\right)\comma
\end{align*}
where $Z(Q)$ is the set of zeros of $Q$. Note that using the primary decomposition and the Chinese remainder theorem, one obtains
\begin{align*}
V\cong\bigoplus_{\nu\in Z(Q)}V_\nu
\end{align*}
with
\begin{align*}
V_\nu=\setc{\sum_{j=0}^{\ell-1}\pi_j(t-\nu)^j}{(\pi_0,\dots,\pi_{\ell-1})\in\C^\ell}
\end{align*}
for $\nu\in Z(Q)$ of order $\ell$. Moreover, the summands of this decomposition are orthogonal with respect to $(\cdot,\cdot)$, and one has on each $V_\nu$ as above that
\begin{align*}
(\pi,\sigma)=\sum_{i=0}^{\ell-1}\pi_i\sigma_{\ell-i-1}\comma
\end{align*}
so that $(\cdot,\cdot)$ is non-degenerate on $V_\nu$ and hence on $V$. Define now $B:V\to V$ by $B(\pi)=t\pi$ and set $w_1=([1],\cdot)\in V$, where $[1]\in V$  corresponds to $1\in\C[t]$. Then, $w_1\circ B^j=([t^j],\cdot)$ for $j\in\{0,\dots,m-1\}$, so that $w_1^T$ is a cyclic vector for $B^T$. Further, set $w_{i+1}=[P_i(t)]\in V$ and observe that
\begin{align*}
\det\left(\begin{matrix}w_{2}&\dots&B^{\ell_1-1}w_{2}&\dots&w_{N}&\dots&B^{\ell_n-1}w_{N}\end{matrix}\right)=0
\end{align*}
iff there exist $t_{ij}\in\C$ with
\begin{align*}
\sum_{j=0}^{\ell_1-1}t_{1j}B^jw_{2}+\dots+\sum_{j=0}^{\ell_n-1}t_{nj}B^jw_{N}=0\in V\comma
\end{align*}
where $B^jw_{i+1}=[t^jP_i(t)]$. Setting $t_i(t)=\sum_{j=0}^{\ell_i-1}t_{ij}t^j$, this is equivalent to $A_{\ell}(F)$, which implies using $F\in R_k$ and lemma \ref{characterization of rk} that
\begin{align*}
A_k(F)\quad\land\quad\forall i<j:\forall l\in\{1,\dots,k_j-k_i\}:\neg A_{k+l(e_i-e_j)}(F)\comma
\end{align*}
so that $(B,w)$ in the identification $\C^m\cong V$ via the ordered basis $(1,t,\dots,t^{m-1})$ satisfies the conditions of proposition \ref{nahm complexes and matrices} (b). The rational map $G=[1,g_1,\dots,g_n]$ constructed from $(B,w)$ satisfies for $z\in\C$ with $z\notin Z(Q)$
\begin{align*}
g_i(z)&=w_1(z\idmat-B)^{-1}w_{i+1}=([1],[(z-t)^{-1}P_i(t)])\\
&=\sum_{\nu\in Z(Q)}\res_\nu\left(\frac{(z-t)^{-1}P_i(t)}{Q(t)}\dv t\right)=\sum_{\nu\in Z(Q)}\res_\nu\left(\frac{f_i(t)}{z-t}\dv t\right)\fullstop
\end{align*}
But
\begin{align*}
\sum_{\nu\in Z(Q)\cup\{z\}}\res_\nu\left(\frac{f_i(t)}{z-t}\dv t\right)=\res_{\infty}\left(\frac{f_i(t)}{z-t}\dv t\right)=0
\end{align*}
by the residue theorem and the fact that
\begin{align*}
\frac{f_i(t^{-1})}{z-t^{-1}}t^{-2}=\frac{t^{m-1}P_i(t^{-1})}{(tz-1)t^mQ(t^{-1})}
\end{align*}
is bounded in $t$ on the complement of an open ball around $0\in\C$ containing $Z(Q)\cup\{z\}$, as $t^mQ(t^{-1})$ is a polynomial in $t$ with value $1$ at $t=0$ and $\deg(P_i)<m$. Hence
\begin{align*}
g_i(z)=-\res_z\left(\frac{f_i(t)}{z-t}\dv t\right)=f_i(z)\comma
\end{align*}
so that $F=G$ and the above construction gives a right inverse. Likewise one checks that it gives a left inverse.
\end{proof}

\section{Rational maps to complete flag varieties}

In this section we establish a bijective correspondence between $\mathcal{N}_k$ and a moduli space of Nahm complexes as defined by Hurtubise in \cite{hurtubise1989}. This is done by considering $k$-Nahm complexes in a particular normal form, regularizing via a singular gauge transformation, transforming to a form reminiscent of the normal form in \cite{hurtubise1989}, introducing multiple intervals and picking out on each interval a certain block, and finally ``singularizing" via a singular gauge transformation. A schematic visualization of these steps can be found by following the figures in this section.

Let $\varepsilon>0$ be small enough such that $\lambda_N-n\varepsilon>\lambda_1$ and define $\lambda_i(\varepsilon)=\lambda_N-(N-i)\varepsilon$ for $i\in\{2,\dots,n\}$, $\lambda_1(\varepsilon)=\lambda_1$ and $\lambda_N(\varepsilon)=\lambda_N$. Denote by $\mathcal{B}_k$ the set of all  $g_\mathrm{st}.(\alpha_\mathrm{st},\beta_\mathrm{st})$ with $(\cdot-\lambda_N)^{2x_N}\beta(\cdot)(\cdot-\lambda_N)^{-2x_N}$ constant on $(\lambda_N-n\varepsilon,\lambda_N)$, and let $g_\mathrm{reg}\in C^\infty(\lambda,\GL{m}{\C})$ satisfy $g_\mathrm{reg}(\lambda_1+z)=z^{-2x_1}$ for $z>0$ small, and $g_\mathrm{reg}(\lambda_N+z)=z^{2x_N}$ for $z<0$ of small modulus.
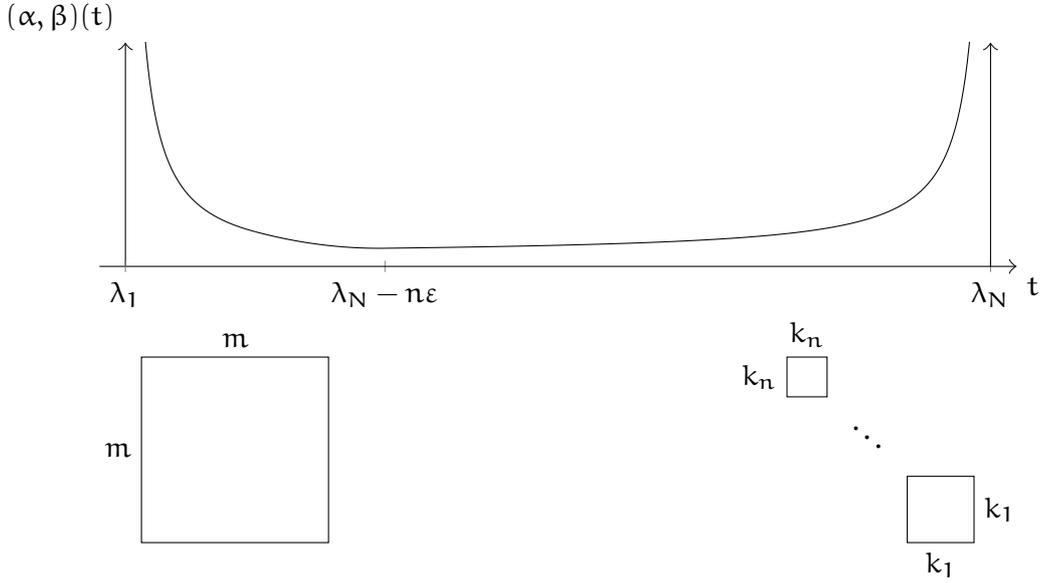
\begin{figure}[H]
\centering
\begin{tikzpicture}
\begin{axis}[
	axis lines=center,
	xmin=-0.3,
	xmax=10.3,
	ymin=0,
	ymax=4.3,
	xlabel={$t$},
	ylabel={$(\alpha,\beta)(t)$},
	xlabel style={below right},
	ylabel style={above left},
	xtick={3,10},
	xticklabels={$\lambda_N-n\varepsilon$,$\lambda_N$},
	extra x ticks={0},
	extra x tick labels={$\lambda_1$},
	ytick={0},
	axis line style={-to new, arrow head=2.8pt},
	width=0.9\textwidth,
	height=0.3\textwidth]
\addplot[mark=none,domain=0.2:1.5,samples=100]{1/x};
\addplot[mark=none,domain=3:9.8,samples=300]{1/(10-x)+0.206};
\addplot[mark=none,domain=1.5:3,samples=30]{1/3*(1/49+4/9)*x^2-(1/49+8/9)*x+2/3-9/4*1/3*(1/49+4/9)+3/2*(1/49+8/9)};
\draw[-to new, arrow head=2.8pt](axis cs:10,0)--(axis cs:10,4.3);
\end{axis}
\draw (0.55,-1.2) +(0pt,-70pt) rectangle +(70pt,0pt) node[anchor=south,xshift=-35pt,yshift=0pt] {$m$} node[anchor=east,xshift=-70pt,yshift=-35pt] {$m$};
%\draw (11.5,-0.8) +(-70pt,-70pt) rectangle +(0pt,0pt);
\draw (9.04,-1.2) +(0pt,-15pt) rectangle +(15pt,0pt) node[anchor=south,xshift=-7.5pt,yshift=0pt] {$k_n$} node[anchor=east,xshift=-15pt,yshift=-7.5pt] {$k_n$} node[xshift=15pt,yshift=-27pt] {$\ddots$};
\draw (11.5,-3.66) +(-25pt,0pt) rectangle +(0pt,25pt) node[anchor=north,xshift=-12.5pt,yshift=-25pt] {$k_1$} node[anchor=west,xshift=0pt,yshift=-12.5pt] {$k_1$};
\end{tikzpicture}
\caption{Schematic visualization of elements of $\mathcal{B}_k$ with the squares corresponding to irreducible blocks of the residue}
\end{figure}
\noindent Then $g_\mathrm{reg}.\mathcal{B}_k$ is the set of solutions $(\alpha,\beta)$ to the complex equation such that $\alpha=0$ near $\lambda_1$ and on $(\lambda_N-n\varepsilon,\lambda_N)$, and with
\begin{align*}
\beta(\lambda_1+z)=\left(\begin{matrix}
0&&&-q_1\\
1&\ddots&&\vdots\\
&\ddots&0&-q_{m-1}\\
&&1&-q_m
\end{matrix}\right)
\end{align*}
for $z>0$ small and $\beta|_{(\lambda_N-n\varepsilon,\lambda_N)}$ constant of the form as $B$ in corollary \ref{bijection of matrices for normal forms}. Note that $g_\mathrm{reg}.:\mathcal{B}_k\to g_\mathrm{reg}.\mathcal{B}_k$ is a bijection regularizing a $k$-Nahm complex via a singular gauge transformation.
\newcommand{\alphabetal}{(0,\left(\tiny\begin{matrix}0&&&{*}\\1&\ddots&&\vdots\\&\ddots&0&{*}\\&&1&{*}\end{matrix}\right))}
\newcommand{\alphabetar}{(0,\left(\begin{matrix}\tiny\begin{matrix}0&&&{*}\\1&\ddots&&\vdots\\&\ddots&0&{*}\\&&1&{*}\end{matrix}&\dots&\tiny\begin{matrix}\phantom{0}&&&&&&{*}\\\phantom{1}&\phantom{\ddots}&&\phantom{0}&&&\vdots\\&\phantom{\ddots}&\phantom{0}&&\phantom{\ddots}&\phantom{0}&{*}\\&&\phantom{1}&&&\phantom{1}&{*}\end{matrix}\\\vdots&\ddots&\vdots\\\tiny\begin{matrix}\phantom{0}&&&{*}\\\phantom{1}&\phantom{\ddots}&&\vdots\\&\phantom{\ddots}&\phantom{0}&{*}\\&&\phantom{1}&{*}\\&&&0\\&&&\vdots\\&&&0\end{matrix}&\dots&\tiny\begin{matrix}0&&&&&&{*}\\1&\ddots&&&&&\vdots\\&\ddots&0&&&&{*}\\&&1&0&&&{*}\\&&&1&\ddots&&{*}\\&&&&\ddots&0&\vdots\\&&&&&1&{*}\end{matrix}\end{matrix}\right))}
\begin{figure}[H]
\centering
\begin{tikzpicture}
\begin{axis}[
	axis lines=center,
	xmin=-0.3,
	xmax=10.3,
	ymin=0,
	ymax=4.3,
	xlabel={$t$},
	ylabel={$(\alpha,\beta)(t)$},
	xlabel style={below right},
	ylabel style={above left},
	xtick={3,10},
	xticklabels={$\lambda_N-n\varepsilon$,$\lambda_N$},
	extra x ticks={0},
	extra x tick labels={$\lambda_1$},
	ytick={0},
	axis line style={-to new, arrow head=2.8pt},
	width=0.9\textwidth,
	height=0.3\textwidth]
\addplot[mark=none,domain=0:1.5,samples=100]{1.5};
\addplot[mark=none,domain=3:10,samples=300]{2};
\addplot[mark=none,domain=1.5:3,samples=30]{-1/4*cos(deg((x-3/2)*2*pi))+7/4};
\draw[-to new, arrow head=2.8pt](axis cs:10,0)--(axis cs:10,4.3);
\path (axis cs:0,0) coordinate (O) (axis cs:1.5,0) coordinate (A) (axis cs:3,0) coordinate (B) (axis cs:10,0) coordinate (R);
\end{axis}
%node at (0,0) {$(0,\left(\right))$};
\draw[decorate,decoration={brace,raise=2em}] (A)--(O) node[midway,below=2.4em]{$\alphabetal$};
\draw[decorate,decoration={brace,raise=2em}] (R)--(B) node[midway,below=2.4em]{$\alphabetar$};
\end{tikzpicture}
\caption{Schematic visualization of elements of $g_\mathrm{reg}.\mathcal{B}_k$}
\end{figure}
\newcommand{\alphabetah}{(0,\left(\begin{matrix}\tiny\begin{matrix}0&&&{*}\\1&\ddots&&\vdots\\&\ddots&0&{*}\\&&1&{*}\end{matrix}&\dots&\tiny\begin{matrix}\phantom{0}&&&&&&{*}\\\phantom{1}&\phantom{\ddots}&&\phantom{0}&&&\vdots\\&\phantom{\ddots}&\phantom{0}&&\phantom{\ddots}&\phantom{0}&{*}\\&&\phantom{1}&&&\phantom{1}&{*}\end{matrix}\\\vdots&\ddots&\vdots\\\tiny\begin{matrix}{*}&\dots&{*}&{*}\\\phantom{1}&\phantom{\ddots}&&0\\&\phantom{\ddots}&\phantom{0}&\vdots\\&&\phantom{1}&0\\&&&0\\&&&\vdots\\&&&0\end{matrix}&\dots&\tiny\begin{matrix}0&&&&&&{*}\\1&\ddots&&&&&\vdots\\&\ddots&0&&&&{*}\\&&1&0&&&{*}\\&&&1&\ddots&&{*}\\&&&&\ddots&0&\vdots\\&&&&&1&{*}\end{matrix}\end{matrix}\right))}
\begin{figure}[H]
\centering
\begin{tikzpicture}
\begin{axis}[
	axis lines=center,
	xmin=-0.3,
	xmax=10.3,
	ymin=0,
	ymax=4.3,
	xlabel={$t$},
	ylabel={$(\alpha,\beta)(t)$},
	xlabel style={below right},
	ylabel style={above left},
	xtick={3,10},
	xticklabels={$\lambda_N-n\varepsilon$,$\lambda_N$},
	extra x ticks={0},
	extra x tick labels={$\lambda_1$},
	ytick={0},
	axis line style={-to new, arrow head=2.8pt},
	width=0.9\textwidth,
	height=0.3\textwidth]
\addplot[mark=none,domain=0:1.5,samples=100]{1.5};
\addplot[mark=none,domain=3:10,samples=300]{1};
\addplot[mark=none,domain=1.5:3,samples=30]{1/4*cos(deg((x-3/2)*2*pi))+5/4};
\draw[-to new, arrow head=2.8pt](axis cs:10,0)--(axis cs:10,4.3);
\path (axis cs:0,0) coordinate (O) (axis cs:1.5,0) coordinate (A) (axis cs:3,0) coordinate (B) (axis cs:10,0) coordinate (R);
\end{axis}
%node at (0,0) {$(0,\left(\right))$};
\draw[decorate,decoration={brace,raise=2em}] (A)--(O) node[midway,below=2.4em]{$\alphabetal$};
\draw[decorate,decoration={brace,raise=2em}] (R)--(B) node[midway,below=2.4em]{$\alphabetah$};
\end{tikzpicture}
\caption{Schematic visualization of elements of $g.g_\mathrm{reg}.\mathcal{B}_k$}
\end{figure}

Further define a bijection $g.:g_\mathrm{reg}.\mathcal{B}_k\to g.g_\mathrm{reg}.\mathcal{B}_k$ by defining $g$ at $(\alpha,\beta)$ to be a gauge transformation that is constant $\idmat$ near $\lambda_1$ and has as a constant value on $(\lambda_N-n\varepsilon,\lambda_N)$ the transformation of corollary \ref{bijection of matrices for normal forms} for $B$ the constant value of $\beta$ near $\lambda_N$. $g.g_\mathrm{reg}.\mathcal{B}_k$ then is the set of all solutions to the complex equation that near $\lambda_1$ are of the same form as the elements of $g_\mathrm{reg}.\mathcal{B}_k$ and on $(\lambda_N-n\varepsilon,\lambda_N)$ have $\alpha=0$ and $\beta$ constant of the form as $B'$ in lemma \ref{matrix to hurtubise normal form surjectivity} for $(i,j)=(m,m-1)$. Thus $g.$ is the action of a gauge transformation that is constant (in $t$) on appropriate neighborhoods of the ends of the interval but that may depend on $\beta$.

\newcommand{\alphabetaa}{(0,\left(\begin{matrix}\tiny\begin{matrix}0&&&{*}\\1&\ddots&&\vdots\\&\ddots&0&{*}\\&&1&{*}\end{matrix}&\dots\quad\\\vdots&\ddots\quad\end{matrix}\right))}

\begin{figure}[H]
\centering
\begin{tikzpicture}
\begin{axis}[
	axis lines=center,
	xmin=-0.3,
	xmax=10.3,
	ymin=0,
	ymax=4.3,
	xlabel={$t$},
	ylabel={$(\alpha,\beta)(t)$},
	xlabel style={below right},
	ylabel style={above left},
	xtick={3,5,7,8,10},
	xticklabels={$\lambda_N-n\varepsilon$,$\lambda_2(\varepsilon)$,$\lambda_3(\varepsilon)$,$\lambda_n(\varepsilon)$,$\lambda_N$},
	extra x ticks={0},
	extra x tick labels={$\lambda_1$},
	ytick={0},
	axis line style={-to new, arrow head=2.8pt},
	width=0.9\textwidth,
	height=0.3\textwidth]
\addplot[mark=none,domain=0:1.5,samples=100]{1.5};
\addplot[mark=none,domain=3:10,samples=300]{1};
\addplot[mark=none,domain=1.5:3,samples=30]{1/4*cos(deg((x-3/2)*2*pi))+5/4};
\draw[-to new, arrow head=2.8pt](axis cs:10,0)--(axis cs:10,4.3);
\draw[-to new, arrow head=2.8pt](axis cs:5,0)--(axis cs:5,4.3);
\draw[-to new, arrow head=2.8pt](axis cs:7,0)--(axis cs:7,4.3);
\draw[-to new, arrow head=2.8pt](axis cs:8,0)--(axis cs:8,4.3);
\path (axis cs:0,0) coordinate (O) (axis cs:1.5,0) coordinate (A) (axis cs:3,0) coordinate (B) (axis cs:5,0) coordinate (R);
\path (axis cs:8,0) coordinate (S) (axis cs:10,0) coordinate (T);
\path (axis cs:5,0) coordinate (U) (axis cs:7,0) coordinate (V);
\draw[fill=white,draw=none](axis cs:7.5,0) coordinate (W) +(-10pt,2pt) rectangle +(10pt,-2pt) node {};
\draw[fill=white,draw=none](axis cs:7.5,1) coordinate (X) +(-10pt,2pt) rectangle +(10pt,-2pt) node {};
\end{axis}
%node at (0,0) {$(0,\left(\right))$};
\draw[decorate,decoration={brace,raise=2em}] (A)--(O) node[midway,below=2.4em]{$\alphabetal$};
\draw[decorate,decoration={brace,raise=2em}] (R)--(B) node[midway,below=8.8em]{$\alphabetah$};
\draw[decorate,decoration={brace,raise=2em}] (T)--(S) node[midway,below=8.8em]{$\alphabetal$};
\draw[decorate,decoration={brace,raise=2em}] (V)--(U) node[midway,below=2.4em]{$\alphabetaa$};
\draw[dashed] (1.85,-3.5) +(53pt,0pt) rectangle +(0pt,-40pt);
\draw[dashed] (1.85,-3.5) +(85pt,0pt) rectangle +(0pt,-62pt);
\draw[dashed] (9.82,-3.5) +(53pt,0pt) rectangle +(0pt,-40pt);
\draw[dashed] (5.85,-1.02) +(85pt,0pt) rectangle +(0pt,-62pt);
\draw[fill=white,draw=none](W) +(-10pt,2pt) rectangle +(10pt,-2pt) node[xshift=-10pt,yshift=2pt] {$\dots$};
\draw[fill=white,draw=none](X) +(-10pt,2pt) rectangle +(10pt,-2pt) node[xshift=-10pt,yshift=2pt] {};
\draw[dotted] (1.85,-3.5)+(53pt,0pt) -- (9.82,-3.5);
\draw[dotted,yshift=-40pt] (1.85,-3.5)+(53pt,0pt) -- (9.82,-3.5);
\draw[dotted] (1.85,-3.5) -- (5.85,-1.02);
\draw[dotted,xshift=85pt,yshift=-62pt] (1.85,-3.5) -- (5.85,-1.02);
\end{tikzpicture}
\caption{Schematic visualization of elements of $(g_\mathrm{reg}^\mathrm{H})^{-1}.\mathcal{B}_k^\mathrm{H}$}
\end{figure}
Define now a bijection $g.g_\mathrm{reg}.\mathcal{B}_k\to (g_\mathrm{reg}^\mathrm{H})^{-1}.\mathcal{B}_k^\mathrm{H}$ by mapping $(\alpha,\beta)$ to
\begin{align*}
\left(\left(\left[\alpha|_{(\lambda_1,\lambda_2(\varepsilon))}\right]_{m_1},\dots,\left[\alpha|_{(\lambda_n(\varepsilon),\lambda_N)}\right]_{m_n}\right),\left(\left[\beta|_{(\lambda_1,\lambda_2(\varepsilon))}\right]_{m_1},\dots,\left[\beta|_{(\lambda_n(\varepsilon,\lambda_N)}\right]_{m_n}\right)\right)\comma
\end{align*}
where $[\cdot]_\ell$ denotes taking the upper left $(\ell\times\ell)$-block, and $\mathcal{B}_k^\mathrm{H}$ is defined to be the image under $g^\mathrm{H}_\mathrm{reg}.$ of the image of the above map with $g^\mathrm{H}_\mathrm{reg}=(g_1,\dots,g_n)$ such that
\begin{align*}
g_1(\lambda_1+z)&=z^{2x(m)}\comma&g_i(\lambda_i(\varepsilon)+z)&=\idmat_{m_i}\comma\\ g_i(\lambda_{i+1}(\varepsilon)+z)&=\diag(\idmat_{m_{i+1}},z^{2x(k_{i})})\comma& g_N(\lambda_N+z)&=z^{2x(k_n)}\fullstop
\end{align*}
With this map, multiple intervals are introduced, and corresponding blocks picked, in order to arrive at a regularized normal form as in \cite{hurtubise1989}. $g_\mathrm{reg}^\mathrm{H}$ then reverses the regularization so that for $((\alpha_1,\dots,\alpha_n),(\beta_1,\dots,\beta_n))\in\mathcal{B}_k^\mathrm{H}$ one obtains a Nahm complex in normal form as defined in \cite{hurtubise1989} by adding the $N$-tuple $(e_1,{v_\mathrm{st}}_2,\dots,{v_\mathrm{st}}_N)$.
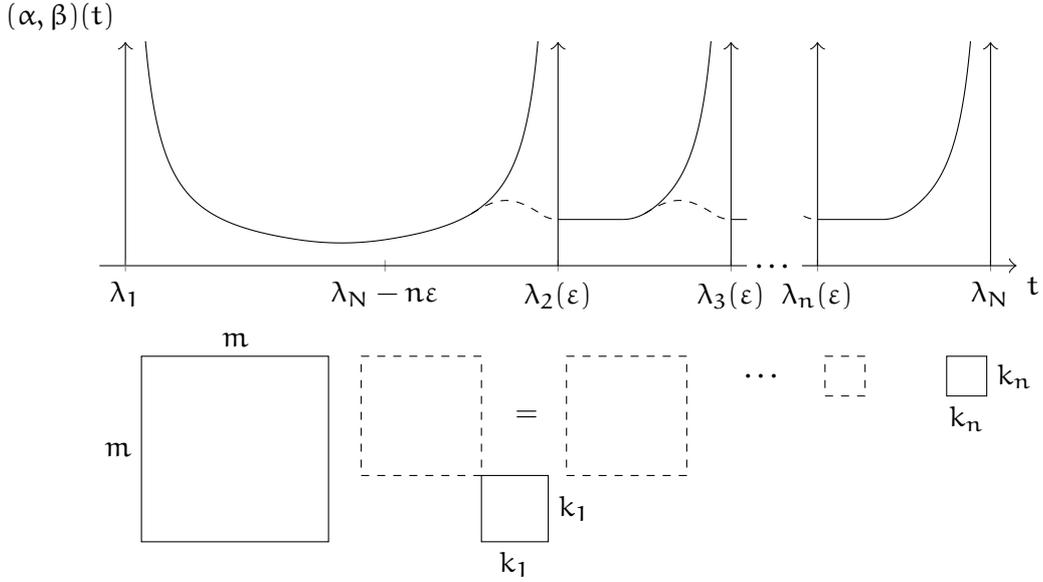
\begin{figure}[H]
\centering
\begin{tikzpicture}
\begin{axis}[
	axis lines=center,
	xmin=-0.3,
	xmax=10.3,
	ymin=0,
	ymax=4.3,
	xlabel={$t$},
	ylabel={$(\alpha,\beta)(t)$},
	xlabel style={below right},
	ylabel style={above left},
	xtick={3,5,7,8,10},
	xticklabels={$\lambda_N-n\varepsilon$,$\lambda_2(\varepsilon)$,$\lambda_3(\varepsilon)$,$\lambda_n(\varepsilon)$,$\lambda_N$},
	extra x ticks={0},
	extra x tick labels={$\lambda_1$},
	ytick={0},
	axis line style={-to new, arrow head=2.8pt},
	width=0.9\textwidth,
	height=0.3\textwidth]
\addplot[mark=none,domain=0.2:2,samples=100]{1/x};
\addplot[mark=none,domain=3:4.8,samples=100]{1/(5-x)};
\addplot[mark=none,domain=2:3,samples=30]{1/4*(x-2.5)^2+7/16};
\addplot[dashed,mark=none,domain=4:4.8,samples=100]{0.8*sin(deg((x-4)*pi/0.8))/pi+1};
\addplot[dashed,mark=none,domain=4.84:5,samples=100]{5*(x-5)^2/2+8/9};
\addplot[mark=none,domain=5:5.75,samples=100]{8/9};
\addplot[mark=none,domain=6.25:6.8,samples=100]{1/(7-x)};
\addplot[mark=none,domain=5.75:6.25,samples=30]{16/9*(x-5.75)^2+8/9};
\addplot[dashed,mark=none,domain=6:6.8,samples=100]{0.8*sin(deg((x-6)*pi/0.8))/pi+1};
\addplot[dashed,mark=none,domain=6.84:7,samples=100]{5*(x-7)^2/2+8/9};
\addplot[mark=none,domain=7:7.185,samples=100]{8/9};
\addplot[mark=none,domain=8:8.75,samples=100]{8/9};
\addplot[mark=none,domain=9.25:9.8,samples=100]{1/(10-x)};
\addplot[mark=none,domain=8.75:9.25,samples=30]{16/9*(x-8.75)^2+8/9};
\addplot[dashed,mark=none,domain=7.84:8,samples=100]{5*(x-8)^2/2+8/9};
\draw[-to new, arrow head=2.8pt](axis cs:10,0)--(axis cs:10,4.3);
\draw[-to new, arrow head=2.8pt](axis cs:5,0)--(axis cs:5,4.3);
\draw[-to new, arrow head=2.8pt](axis cs:7,0)--(axis cs:7,4.3);
\draw[-to new, arrow head=2.8pt](axis cs:8,0)--(axis cs:8,4.3);
\draw[fill=white,draw=none](axis cs:7.5,0) coordinate (W) +(-10pt,2pt) rectangle +(10pt,-2pt) node {$\dots$};
\end{axis}
\draw (0.55,-1.2) +(0pt,-70pt) rectangle +(70pt,0pt) node[anchor=south,xshift=-35pt,yshift=0pt] {$m$} node[anchor=east,xshift=-70pt,yshift=-35pt] {$m$};
%\draw (11.5,-0.8) +(-70pt,-70pt) rectangle +(0pt,0pt);
\draw[dashed] (3.44,-1.2) +(0pt,-45pt) rectangle +(45pt,0pt);
\draw (5.9,-3.66) +(-25pt,0pt) rectangle +(0pt,25pt) node[anchor=north,xshift=-12.5pt,yshift=-25pt] {$k_1$} node[anchor=west,xshift=0pt,yshift=-12.5pt] {$k_1$};
\draw[dashed] (6.14,-1.2) +(0pt,-45pt) rectangle +(45pt,0pt) node[xshift=-60pt,yshift=-22.5pt] {$=$};
\draw[dashed] (9.54,-1.2) +(0pt,-15pt) rectangle +(15pt,0pt);
\draw (11.14,-1.2) +(0pt,-15pt) rectangle +(15pt,0pt) node[anchor=north,xshift=-7.5pt,yshift=-15pt] {$k_n$} node[anchor=west,xshift=0pt,yshift=-7.5pt] {$k_n$};
\draw[fill=white,draw=none](W) +(-10pt,2pt) rectangle +(10pt,-2pt) node[xshift=-10pt,yshift=2pt] {$\dots$};
\draw[fill=white,draw=none](8.73,-1.46) +(-10pt,2pt) rectangle +(10pt,-2pt) node[xshift=-10pt,yshift=2pt] {$\dots$};
\end{tikzpicture}
\caption{Schematic visualization of elements of $\mathcal{B}_k^\mathrm{H}$ with the squares corresponding to irreducible blocks of the residue and dotted squares indicating the value of the corresponding block at the boundary}
\end{figure}

\begin{proposition}\label{correspondence to h nahm complexes}
The above maps induce a bijection between $\mathcal{N}_k$ and the moduli space of Nahm complexes as in \cite{hurtubise1989}, here denoted by $\mathcal{N}_{(m_1,\dots,m_n)}^\mathrm{H}$.\qed
\end{proposition}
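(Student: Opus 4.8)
The plan is to realize the asserted bijection as the composite $\Psi$ of the four maps constructed above in this section --- the singular regularization $g_{\mathrm{reg}}.\,$, the $\beta$-dependent gauge $g.\,$, the interval-splitting-and-truncation map, and the singular de-regularization $g^{\mathrm H}_{\mathrm{reg}}.\,$ --- followed by adjoining the fixed tuple $v_{\mathrm{st}}$, and then to verify that this composite descends to a bijection on equivalence classes. Concretely, it suffices to establish: (a) the natural maps $\mathcal B_k\to\mathcal N_k$ and (after adjoining $v_{\mathrm{st}}$) $\mathcal B_k^{\mathrm H}\to\mathcal N_{(m_1,\dots,m_n)}^{\mathrm H}$ are surjective; (b) $\Psi\colon\mathcal B_k\to\mathcal B_k^{\mathrm H}$ is a well-defined bijection of sets; and (c) two elements of $\mathcal B_k$ have the same $\mathcal N_k$-class iff their $\Psi$-images, equipped with $v_{\mathrm{st}}$, have the same $\mathcal N^{\mathrm H}$-class. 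Given these, $\Psi$ together with the passage to classes defines a well-defined bijection $\mathcal N_k\to\mathcal N_{(m_1,\dots,m_n)}^{\mathrm H}$.

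For (a), proposition~\ref{normal form} puts a $k$-Nahm complex into normal form, and proposition~\ref{normal form}(a) applied near $\lambda_N$, cut off to the identity away from $\lambda_N$ so as to remain in $\mathcal G$ and not disturb the $\lambda_1$-end, makes $(\cdot-\lambda_N)^{2x_N}\beta(\cdot-\lambda_N)^{-2x_N}$ constant on $(\lambda_N-n\varepsilon,\lambda_N)$; hence the class meets $\mathcal B_k$. The surjectivity of $\mathcal B_k^{\mathrm H}\to\mathcal N^{\mathrm H}$ is the analogous normal-form statement on the Hurtubise side.

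For (b), each of the four maps is by construction a bijection onto its stated target, so the work is to track what the boundary data does. Conjugation by the appropriate power of $z$ in $g_{\mathrm{reg}}$ turns the first-order poles at $\lambda_1$ and $\lambda_N$ into $\alpha=0$ together with a nilpotent-plus-companion $\beta$, and here the half-integer off-diagonal blocks built into the normal form are precisely what makes $g_{\mathrm{reg}}.(\alpha,\beta)$ genuinely smooth; the gauge $g.\,$, constant near each end and on $(\lambda_N-n\varepsilon,\lambda_N)$, brings $\beta$ near $\lambda_N$ into the block shape of $B'$ in lemma~\ref{matrix to hurtubise normal form surjectivity}, so that the upper-left $m_i\times m_i$ block of the restriction to $(\lambda_i(\varepsilon),\lambda_{i+1}(\varepsilon))$ again solves the complex equation and has exactly the boundary behaviour prescribed by $(g^{\mathrm H}_{\mathrm{reg}})^{-1}$ at the junctions $\lambda_i(\varepsilon)$; and $g^{\mathrm H}_{\mathrm{reg}}$ then re-installs first-order poles with irreducible residues at $\lambda_1,\lambda_2(\varepsilon),\dots,\lambda_N$, which is Hurtubise's normal form. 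The delicate point is the invertibility --- not merely surjectivity --- of the truncation map: one must know that the discarded lower blocks of $\beta$ near $\lambda_N$ are reconstructible from the retained ones, which is what corollary~\ref{bijection of matrices for normal forms} and lemma~\ref{matrix to hurtubise normal form surjectivity} provide.

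For (c), the cleanest route is via the matrix descriptions: proposition~\ref{nahm complexes and matrices} presents $\mathcal N_k$ as the $\GL{m}{\C}$-quotient of pairs $(B,w)$, \cite{hurtubise1989} gives an analogous matrix presentation of $\mathcal N_{(m_1,\dots,m_n)}^{\mathrm H}$, and one checks that $\Psi$, read through these identifications, is the identity on matrix data modulo the respective conjugation actions, so the equivalence relations correspond automatically. Alternatively, one argues directly that a $\mathcal G$-equivalence of normal-form representatives is forced to be constant near each end (it conjugates $x_i/z$ to itself and the off-diagonal block structure pins the remaining finite-dimensional freedom), and that this residual group is carried by $\Psi$ onto the residual group preserving the Hurtubise normal form, being block-compatible with the truncation and commuting with the $z^{2x}$-factors defining $g_{\mathrm{reg}}$ and $g^{\mathrm H}_{\mathrm{reg}}$. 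I expect the main obstacle to be exactly this: coordinating the residual gauge freedoms through the truncation step and, equivalently, proving that the carefully arranged half-power off-diagonal entries near $\lambda_N$ carry no information beyond what survives in $\mathcal B_k^{\mathrm H}$, so that reassembling a point of $\mathcal B_k$ from a point of $\mathcal B_k^{\mathrm H}$ is both possible and unique.
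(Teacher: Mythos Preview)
Your proposal is correct and follows exactly the approach the paper has in mind: the paper in fact gives no explicit proof at all --- the proposition is stated with an immediate \qed, the author regarding it as a direct consequence of the chain of bijections $\mathcal B_k\to g_{\mathrm{reg}}.\mathcal B_k\to g.g_{\mathrm{reg}}.\mathcal B_k\to(g^{\mathrm H}_{\mathrm{reg}})^{-1}.\mathcal B_k^{\mathrm H}\to\mathcal B_k^{\mathrm H}$ already set up in the text. Your decomposition into (a) surjectivity of the normal-form representatives onto the moduli spaces, (b) bijectivity of the composite $\Psi$, and (c) compatibility of the residual gauge freedoms is a faithful and more careful elaboration of what the paper leaves implicit, invoking precisely the same ingredients (proposition~\ref{normal form}, corollary~\ref{bijection of matrices for normal forms}, lemma~\ref{matrix to hurtubise normal form surjectivity}, and proposition~\ref{nahm complexes and matrices}).
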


\begin{remark}
Permuting the blocks of elements of $g_\mathrm{reg}.\mathcal{B}_k$ by $\sigma\in S(n)$ and then proceeding in analogy to the above construction yields a bijection between $\mathcal{N}_k$ and $\mathcal{N}_{(m_1',\dots,m_n')}^\mathrm{H}$ with
\begin{align*}
m_i'=\sum_{j=1}^nk_{\sigma(j)}\fullstop
\end{align*}
\end{remark}

Using that Hurtubise in \cite{hurtubise1989} established a bijective correspondence between $\mathcal{N}_{(m_1,\dots,m_n)}^\mathrm{H}$ and $\brat{(m_1,\dots,m_n)}{\CP{1}}{F(N)}$, where $F(N)$ denotes the variety of complete flags in $\C^N$, the main result of the present paper may also be obtained as a corollary of the following lemma.

\begin{lemma}
Mapping $F\in R_k$ to the unique flag of subbundles of $E$ in definition \ref{definition based} induces a bijection $\mathrm{Fl}_k:R_k\to\brat{(m_1,\dots,m_n)}{\CP{1}}{F(N)}$, where
\begin{align*}
m_i=\sum_{j=i}^nk_j\comma
\end{align*}
and $\brat{\ell}{\CP{1}}{F(N)}$ denotes the set of based rational maps from $\CP{1}$ to $F(N)$ of degree $\ell=(\ell_1,\dots,\ell_n)$.
\begin{proof}
Let $0\subset E_1\subset\dots\subset E_n\subset E$ be a full flag of subbundles corresponding to an element of $\brat{(m_1,\dots,m_n)}{\CP{1}}{F(N)}$, so that $\deg(E_i)=-m_{N-i}$. There are exact sequences
\[
\begin{tikzcd}
0\arrow{r}{}&E_{i-1}\arrow{r}{}&E_{i}\arrow{r}{}&\faktor{E_{i}}{E_{i-1}}\arrow{r}{}&0
\end{tikzcd}\comma
\]
where $E_i/E_{i-1}\cong\mathcal{O}(-\ell_{N-i})$ for $\ell_i=m_i-m_{i-1}$ with $m_0=m_N=0$. Then,
\begin{align*}
&\mathrm{Ext}^1(\mathcal{O}(-\ell_{N-i}),\bigoplus_{j=1}^{i-1}\mathcal{O}(-\ell_{N-j}))\cong\bigoplus_{j=1}^{i-1}\mathrm{Ext}^1(\mathcal{O}(-\ell_{N-i}),\mathcal{O}(-\ell_{N-j}))\\
&\quad\cong\bigoplus_{j=1}^{i-1}\mathrm{Ext}^1(\mathcal{O},\mathcal{O}(\ell_{N-i}-\ell_{N-j}))\cong\bigoplus_{j=1}^{i-1}H^1(\mathcal{O}(\ell_{N-i}-\ell_{N-j}))=0
\end{align*}
iteratively implies that
\begin{align*}
E_i\cong\bigoplus_{j=1}^i\mathcal{O}(-\ell_{N-j})\fullstop
\end{align*}
In particular, $(E/E_n)^*\cong\mathcal{O}(-m)$ is a line subbundle of $E$ and thus defines a degree $m$ rational map from $\CP{1}$ to $\CP{n}$. This construction induces a map $\brat{(m_1,\dots,m_n)}{\CP{1}}{F(N)}\to R_k$, that since
\begin{align*}
\left(\faktor{E}{\left(\faktor{E}{L(F)}\right)^*}\right)^*=L(F)\subset E\comma&&\left(\faktor{E}{\left(\faktor{E}{E_n}\right)^*}\right)^*=E_n\subset E\comma
\end{align*}
inverts $\mathrm{Fl}_k$.
\end{proof}
\end{lemma}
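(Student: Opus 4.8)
The plan is to exhibit the inverse of $\mathrm{Fl}_k$ explicitly and to check that both compositions are the identity; the only step with genuine content is a vanishing‑of‑$\mathrm{Ext}^1$ argument forcing the subbundles of a based flag map to split in the prescribed way. First, $\mathrm{Fl}_k$ is well defined: the chain
\begin{align*}
0\subset\mathcal{O}(-k_n)\subset\bigoplus_{i=0}^{1}\mathcal{O}(-k_{n-i})\subset\dots\subset\left(\faktor{E}{L(F)}\right)^*\subset E
\end{align*}
of Definition \ref{definition based} is a complete flag of subbundles of $E$ (its $i$-th member has rank $i$ and the chain has length $N$), so it defines a holomorphic, hence rational, map $\CP{1}\to F(N)$. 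Since $F\in R_k$ has holomorphic charge $k$, one has $(E/L(F))^*\cong\bigoplus_{i=1}^n\mathcal{O}(-k_i)$, so the $i$-th member $E_i$ has degree $-\sum_{j=N-i}^{n}k_j=-m_{N-i}$; the flag map therefore has degree $(m_1,\dots,m_n)$, and being anti-standard at $\infty$ by Definition \ref{definition based} it is based. Uniqueness of the flag in Definition \ref{definition based} — needed for $\mathrm{Fl}_k$ to be single-valued — follows by peeling line subbundles off the bottom: a sub-line-bundle of $\bigoplus_{i=1}^n\mathcal{O}(-k_i)$ isomorphic to $\mathcal{O}(-k_n)$ has vanishing component in every summand $\mathcal{O}(-k_i)$ with $k_i>k_n$, because $H^0(\mathcal{O}(k_n-k_i))=0$, so it is determined by its fibre at $\infty$; passing to the quotient $\cong\bigoplus_{i=1}^{n-1}\mathcal{O}(-k_i)$ and iterating pins down the whole flag.

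For surjectivity and the inverse construction, let $0\subset E_1\subset\dots\subset E_n\subset E$ be a full flag of subbundles representing a based flag map of degree $(m_1,\dots,m_n)$, so $\deg(E_i)=-m_{N-i}$ and the flag is anti-standard at $\infty$. Each $E_i/E_{i-1}$ is a line bundle of degree $-k_{N-i}$ (with $m_0=m_N=0$), hence $\cong\mathcal{O}(-k_{N-i})$, and since $k_1\geq\dots\geq k_n$ one has $k_{N-i}-k_{N-j}\geq 0$ for $j<i$, so
\begin{align*}
\mathrm{Ext}^1\!\left(\mathcal{O}(-k_{N-i}),\bigoplus_{j=1}^{i-1}\mathcal{O}(-k_{N-j})\right)\cong\bigoplus_{j=1}^{i-1}H^1\!\left(\mathcal{O}(k_{N-i}-k_{N-j})\right)=0\fullstop
\end{align*}
Induction on $i$ gives $E_i\cong\bigoplus_{j=1}^i\mathcal{O}(-k_{N-j})$, so $E_n\cong\bigoplus_{j=1}^n\mathcal{O}(-k_j)$ and $E/E_n$ is a line bundle of degree $m$; thus $L:=(E/E_n)^*\cong\mathcal{O}(-m)$ is a line subbundle of $E$ defining a rational map $F\in\rat{m}{\CP{1}}{\CP{n}}$ with $L(F)=L$ and $(E/L(F))^*=E_n$. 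By the splitting just established, $0\subset E_1\subset\dots\subset E_{n-1}\subset E_n=(E/L(F))^*$ is a flag of exactly the type in Definition \ref{definition based}; since it is anti-standard at $\infty$ one gets $F(\infty)=[1,0,\dots,0]$ (the fibre $L|_\infty$ is the annihilator of $E_n|_\infty=\langle e_2,\dots,e_N\rangle$), hence $F$ is based, while $(E/L(F))^*$ having splitting type $(-k_1,\dots,-k_n)$ means $F\in R_k$.

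Finally, the two constructions are mutually inverse because $V\mapsto(E/V)^*$ is an involution on subbundles of complementary rank, i.e. by the identities $\left(\faktor{E}{\left(\faktor{E}{L(F)}\right)^*}\right)^*=L(F)$ and $\left(\faktor{E}{\left(\faktor{E}{E_n}\right)^*}\right)^*=E_n$: starting from $F\in R_k$, forming $(E/L(F))^*$ and running the inverse returns a rational map with line subbundle $(E/(E/L(F))^*)^*=L(F)$, hence $F$ itself; starting from a based flag map $E_\bullet$, the inverse gives $F$ with $(E/L(F))^*=E_n$, and $\mathrm{Fl}_k(F)$ is then the unique Definition \ref{definition based} flag with top member $E_n$ anti-standard at $\infty$, which is $E_\bullet$. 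I expect the main obstacle to be the bookkeeping in the splitting step — tracking which summand $\mathcal{O}(-k_j)$ lands in which term $E_i$ so that the reconstructed flag matches the isomorphism types of Definition \ref{definition based} verbatim, and checking that the degree and basing conventions for maps to $F(N)$ translate exactly into the $\CP{n}$ side; the $\mathrm{Ext}^1$/$H^1$ vanishing itself is automatic once the ordering $k_1\geq\dots\geq k_n$ is invoked.
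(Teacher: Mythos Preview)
Your proof is correct and follows essentially the same route as the paper: both construct the inverse by showing, via the vanishing of $\mathrm{Ext}^1(\mathcal{O}(-k_{N-i}),\mathcal{O}(-k_{N-j}))$ for $j<i$, that each $E_i$ splits as $\bigoplus_{j=1}^i\mathcal{O}(-k_{N-j})$, then take $(E/E_n)^*$ as the line subbundle defining the rational map, and finally appeal to the involution $V\mapsto (E/V)^*$ to verify that the two constructions undo each other. You supply a bit more than the paper does---the explicit check that $\mathrm{Fl}_k$ is well defined (uniqueness of the flag via peeling off line subbundles using $H^0(\mathcal{O}(k_n-k_i))=0$), and the verification that the reconstructed $F$ satisfies $F(\infty)=[1,0,\dots,0]$ and lands in $R_k$---but these are straightforward complements to the same argument, not a different approach.
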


\section{The symplectic form}

Consider the maps
\[
\begin{tikzcd}
\mathcal{B}_k\arrow{r}{g_\mathrm{reg}.}&g_\mathrm{reg}.\mathcal{B}_k\arrow{r}{g.}&g.g_\mathrm{reg}.\mathcal{B}_k\arrow{r}{}&(g^\mathrm{H}_\mathrm{reg})^{-1}.\mathcal{B}_k^\mathrm{H}\arrow{r}{g_\mathrm{reg}^\mathrm{H}.}&\mathcal{B}_k^\mathrm{H}
\end{tikzcd}
\]
of the previous section. Now identify elements $((\alpha_1,\dots,\alpha_n),(\beta_1,\dots,\beta_n))$ of $\mathcal{B}_k^\mathrm{H}$ with pairs $(\alpha,\beta)$, where $(\alpha,\beta)=(\alpha_i,\beta_i)$ on $(\lambda_i(\varepsilon),\lambda_{i+1}(\varepsilon))$. Then consider on $\mathcal{B}_k^\mathrm{H}$ the symplectic form
\begin{align*}
\omega^{\mathcal{B}_k^\mathrm{H}}=\int_{\lambda_1}^{\lambda_N}\tr(\dv\alpha\wedge\dv\beta)\,\dv t\comma
\end{align*}
i.e. for $(a,b),(a',b')\in T_{(\alpha,\beta)}\mathcal{B}_k^\mathrm{H}$ one has
\begin{align*}
\omega^{\mathcal{B}_k^\mathrm{H}}_{(\alpha,\beta)}((a,b),(a',b'))=\int_{\lambda_1}^{\lambda_n}\tr(ab'-a'b)\,\dv t\fullstop
\end{align*}
Futher denote by $\omega$ with a corresponding superscript the pullback via the above maps to the spaces mentioned above. Note that since the corresponding map $g_\mathrm{reg}^\mathrm{H}:\mathcal{B}_k^\mathrm{H}\to \prod_{i=1}^{n}C^\infty((\lambda_i(\varepsilon),\lambda_{i+1}(\varepsilon)),\GL{m_i}{\C})$ (without the dot) is constant, $\dv g_\mathrm{reg}^\mathrm{H}._{(\alpha,\beta)}$ acts just by conjugation, so that
\begin{align*}
\omega^{(g_\mathrm{reg}^\mathrm{H})^{-1}.\mathcal{B}_k^\mathrm{H}}=\int_{\lambda_1}^{\lambda_N}\tr(\dv\alpha\wedge\dv\beta)\,\dv t\fullstop
\end{align*}
Since $(\alpha,\beta)\in(g_\mathrm{reg}^\mathrm{H})^{-1}.\mathcal{B}_k^\mathrm{H}$ satisfies $\alpha|_{(\lambda_2(\varepsilon),\lambda_N)}=0$, it suffices to integrate over $(\lambda_1,\lambda_2(\varepsilon))$. Hence, as the differential of the map $g.g_\mathrm{reg}.\mathcal{B}_k\to(g_\mathrm{reg}^\mathrm{H})^{-1}.\mathcal{B}_k^\mathrm{H}$ is just given by restriction to $\bigcup_{i=1}^n(\lambda_i(\varepsilon),\lambda_{i+1}(\varepsilon))$, one obtains
\begin{align*}
\omega^{g.g_\mathrm{reg}.\mathcal{B}_k}=\int_{\lambda_1}^{\lambda_N}\tr(\dv\alpha\wedge\dv\beta)\,\dv t\fullstop
\end{align*}

Since the map $g:g_\mathrm{reg}.\mathcal{B}_k\to C^\infty(\lambda,\GL{m}{\C})$ (again without the dot) is not constant, the differential is not just given by conjugation, but one has
\begin{align*}
\dv g._{(\alpha,\beta)}(a,b)=\left(g\left(a-\bar\dv_\alpha(g^{-1}\rho)\right)g^{-1},g\left(b-[\beta,g^{-1}\rho]\right)g^{-1}\right)\comma
\end{align*}
where $\rho=\dv g_{(\alpha,\beta)}(a,b)$,
\begin{align*}
\bar\dv_\alpha=\frac{1}{2}\frac{\dv}{\dv t}+[\alpha,\cdot]
\end{align*}
(as in \cite{donaldson1984}), and for readability $g$ is written instead of $g(\alpha,\beta)$. Now compute in analogy to \cite[p. 78]{bielawski1993} for $(a,b),(a',b')\in T_{(\alpha,\beta)}g_\mathrm{reg}.\mathcal{B}_k$ with $\dv g_{(\alpha,\beta)}(a,b)=\rho$ and $\dv g_{(\alpha,\beta)}(a',b')=\rho'$
\begin{align*}
&\omega^{g_\mathrm{reg}.\mathcal{B}_k}_{(\alpha,\beta)}((a,b),(a',b'))=\int_{\lambda_1}^{\lambda_N}\tr\left(g\left(a-\bar\dv_\alpha(g^{-1}\rho)\right))\left(b'-[\beta,g^{-1}\rho']\right)g^{-1}\right.\\
&\hphantom{\omega^{g_\mathrm{reg}.\mathcal{B}_k}_{(\alpha,\beta)}((a,b),(a',b'))=\int_{\lambda_1}^{\lambda_N}\tr(}\left.{}-{}g\left(a'-\bar\dv_\alpha(g^{-1}\rho')\right))\left(b-[\beta,g^{-1}\rho]\right)g^{-1}\right)\,\dv t\\
&\quad=\int_{\lambda_1}^{\lambda_N}\tr\left(ab'-a'b\right)\,\dv t\\
&\quad\phantom{{}={}}+\int_{\lambda_1}^{\lambda_N}\tr\left(-\bar\dv_\alpha(g^{-1}\rho)b'-a[\beta,g^{-1}\rho']+\bar\dv_\alpha(g^{-1}\rho)[\beta,g^{-1}\rho']\right)\,\dv t\\
&\quad\phantom{{}={}}+\int_{\lambda_1}^{\lambda_N}\tr\left(\bar\dv_\alpha(g^{-1}\rho')b+a'[\beta,g^{-1}\rho]-\bar\dv_\alpha(g^{-1}\rho')[\beta,g^{-1}\rho]\right)\,\dv t\\
&\quad=\int_{\lambda_1}^{\lambda_N}\tr\left(ab'-a'b\right)\,\dv t\\
&\quad\phantom{{}={}}+\int_{\lambda_1}^{\lambda_N}\tr\left(g^{-1}\rho\bar\dv_\alpha b'+g^{-1}\rho'[\beta,a]-g^{-1}\rho\left([\bar\dv_\alpha\beta,\rho']+[\beta,\bar\dv_\alpha\rho']\right)\right)\,\dv t\\
&\quad\phantom{{}={}}+\int_{\lambda_1}^{\lambda_N}\tr\left(-g^{-1}\rho'\bar\dv_\alpha b-g^{-1}\rho[\beta,a']+g^{-1}\rho[\beta,\bar\dv_\alpha(g^{-1}\rho)]\right)\,\dv t\\
&\quad\phantom{{}={}}+\frac{1}{2}\left.\tr\left(-g^{-1}\rho b'+g^{-1}\rho' b+g^{-1}\rho[\beta,g^{-1}\rho']\right)\right|_{\lambda_1}^{\lambda_N}\\
&\quad=\int_{\lambda_1}^{\lambda_N}\tr\left(ab'-a'b\right)\,\dv t+\frac{1}{2}\left.\tr\left(-g^{-1}\rho b'+g^{-1}\rho' b+g^{-1}\rho[\beta,g^{-1}\rho']\right)\right|_{\lambda_1}^{\lambda_N}\comma
\end{align*}
where it was used that the complex equation $\bar\dv_\alpha\beta=0$ yields $\bar\dv_\alpha b=[\beta,a]$ and $\bar\dv_\alpha b'=[\beta,a']$. Since $g$ is constant near $\lambda_1$ independent of $(\alpha,\beta)$ it follows that $\rho(\lambda_1)=0$, so the second term above only needs to be considered at $\lambda_N$. For fixed $(\alpha,\beta)$, $g$ is constant lower triangularly blocked near $\lambda_N$ with diagonal blocks equal to $\idmat$ and all other non-vanishing blocks of the form
\begin{align*}
\left(\begin{matrix}
0&{*}&\dots&{*}\\
&\ddots&\ddots&\vdots\\
&&0&{*}\\
0&\dots&0&0\\
\vdots&\ddots&\vdots&\vdots\\
0&\dots&0&0
\end{matrix}\right)\fullstop
\end{align*}
The same is true for $g^{-1}$. Hence $\rho$ and $\rho'$ are strictly lower triangularly blocked with non-vanishing blocks of that same form. Since
\begin{align*}
\left(\begin{matrix}
0&{*}&\dots&{*}\\
&\ddots&\ddots&\vdots\\
&&0&{*}\\
0&\dots&0&0\\
\vdots&\ddots&\vdots&\vdots\\
0&\dots&0&0
\end{matrix}\right)
\left(\begin{matrix}
0&\dots&0&{*}\\
\vdots&\ddots&\vdots&\vdots\\
0&\dots&0&{*}
\end{matrix}\right)=
\left(\begin{matrix}
0&\dots&0&{*}\\
\vdots&\ddots&\vdots&\vdots\\
0&\dots&0&{*}\\
0&\dots&0&0\\
\vdots&\ddots&\vdots&\vdots\\
0&\dots&0&0
\end{matrix}\right)
\end{align*}
$g^{-1}\rho b'$ and $g^{-1}\rho'b$ at $\lambda_N$ have vanishing diagonal entries, so that their trace is zero. Further, observe that $\beta-y_N$ near $\lambda_N$ is of the same form as $b$ and $b'$, so that $\tr(\rho\rho'(\beta-y_N))=0$, while $(\beta-y_N)b'=0$, as
\begin{align*}
\left(\begin{matrix}
0&\dots&0&{*}\\
\vdots&\ddots&\vdots&\vdots\\
0&\dots&0&{*}
\end{matrix}\right)
\left(\begin{matrix}
0&{*}&\dots&{*}\\
&\ddots&\ddots&\vdots\\
&&0&{*}\\
0&\dots&0&0\\
\vdots&\ddots&\vdots&\vdots\\
0&\dots&0&0
\end{matrix}\right)=0\fullstop
\end{align*}
Finally observe that $[y_N,\rho']$ is again strictly lower triangularly blocked, so that one obtains $\tr(\rho[y_N,\rho'])=0$. Altogether, this shows that at $\lambda_N$
\begin{align*}
\tr(\rho[\beta,\rho'])=\tr(\rho[y_N,\rho'])+\tr(\rho[\beta-y_N,\rho'])=0\comma
\end{align*}
which then yields
\begin{align*}
\frac{1}{2}\left.\tr\left(-g^{-1}\rho b'+g^{-1}\rho' b+g^{-1}\rho[\beta,g^{-1}\rho']\right)\right|_{\lambda_1}^{\lambda_N}=0\comma
\end{align*}
so that
\begin{align*}
\omega^{g_\mathrm{reg}.\mathcal{B}_k}=\int_{\lambda_1}^{\lambda_N}\tr(\dv\alpha\wedge\dv\beta)\,\dv t\fullstop
\end{align*}

Similar to the argument for $g_\mathrm{reg}^\mathrm{H}$ above, note that the differential $\dv g_\mathrm{reg}._{(\alpha,\beta)}$ is just given by conjugation, as $g_\mathrm{reg}:\mathcal{B}_k\to C^\infty(\lambda,\GL{m}{\C})$ is constant, which finally yields
\begin{align*}
\omega^{\mathcal{B}_k}=\int_{\lambda_1}^{\lambda_N}\tr(\dv\alpha\wedge\dv\beta)\,\dv t\fullstop
\end{align*}
This shows

\begin{proposition}
The map of proposition \ref{correspondence to h nahm complexes} from $\mathcal{N}_k$ to $\mathcal{N}_{(m_1,\dots,m_n)}^\mathrm{H}$ preserves the symplectic form given, respectively, by
\[
\pushQED{\qed}
\int_{\lambda_1}^{\lambda_N}\tr(\dv\alpha\wedge\dv\beta)\,\dv t\fullstop\qedhere
\popQED
\]
\end{proposition}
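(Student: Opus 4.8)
The plan is to factor the map of Proposition~\ref{correspondence to h nahm complexes} through the four maps $g_\mathrm{reg}.$, $g.$, the restriction-and-block map, and $g_\mathrm{reg}^\mathrm{H}.$ constructed in the previous section, whose composition $\mathcal{B}_k\to\mathcal{B}_k^\mathrm{H}$ represents it, and to check that the form $\int_{\lambda_1}^{\lambda_N}\tr(\dv\alpha\wedge\dv\beta)\,\dv t$ on $\mathcal{B}_k^\mathrm{H}$ — Hurtubise's symplectic form on $\mathcal{N}_{(m_1,\dots,m_n)}^\mathrm{H}$ — pulls back along this chain to the form of the same shape on $\mathcal{B}_k$, which is the symplectic form on $\mathcal{N}_k$. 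So it suffices to compute the pullback one stage at a time, from right to left.

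Two of the stages are immediate. The gauge transformations $g_\mathrm{reg}$ (as a $\GL{m}{\C}$-valued map on $\lambda$) and $g_\mathrm{reg}^\mathrm{H}$ do not depend on the point $(\alpha,\beta)$, so the differentials of $g_\mathrm{reg}.$ and $g_\mathrm{reg}^\mathrm{H}.$ act on a tangent vector $(a,b)$ by pure conjugation $(a,b)\mapsto(gag^{-1},gbg^{-1})$; by cyclicity of $\tr$ the integrand $\tr(ab'-a'b)$ is unchanged, so each of these two stages reproduces the form verbatim. For the restriction-and-block map $g.g_\mathrm{reg}.\mathcal{B}_k\to(g_\mathrm{reg}^\mathrm{H})^{-1}.\mathcal{B}_k^\mathrm{H}$ one uses that elements $(\alpha,\beta)$ of the target satisfy $\alpha|_{(\lambda_2(\varepsilon),\lambda_N)}=0$, so the integral over $\lambda$ only contributes over $(\lambda_1,\lambda_2(\varepsilon))$, on which the picked $(m_1\times m_1)$-block is the whole of $\gl{m}{\C}$; since the differential of this map is just restriction, the form is again reproduced on the source.

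The one genuinely non-trivial stage, and the main obstacle, is $g.\colon g_\mathrm{reg}.\mathcal{B}_k\to g.g_\mathrm{reg}.\mathcal{B}_k$: here $g=g(\alpha,\beta)$ genuinely depends on the point, its constant value near $\lambda_N$ being the change of basis of Corollary~\ref{bijection of matrices for normal forms} applied to the constant value of $\beta$ there. Following Bielawski's computation, I would write $\dv g._{(\alpha,\beta)}(a,b)=\bigl(g(a-\bar\dv_\alpha(g^{-1}\rho))g^{-1},\,g(b-[\beta,g^{-1}\rho])g^{-1}\bigr)$ with $\rho=\dv g_{(\alpha,\beta)}(a,b)$ and $\bar\dv_\alpha=\tfrac12\tfrac{\dv}{\dv t}+[\alpha,\cdot]$, substitute into $\omega^{g.g_\mathrm{reg}.\mathcal{B}_k}$, expand, and integrate by parts. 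The complex equation $\bar\dv_\alpha\beta=0$ gives $\bar\dv_\alpha b=[\beta,a]$ and $\bar\dv_\alpha b'=[\beta,a']$, and with these all the interior correction terms cancel, leaving $\int_{\lambda_1}^{\lambda_N}\tr(ab'-a'b)\,\dv t$ plus a boundary term $\tfrac12\tr\bigl(-g^{-1}\rho b'+g^{-1}\rho'b+g^{-1}\rho[\beta,g^{-1}\rho']\bigr)\big|_{\lambda_1}^{\lambda_N}$. At $\lambda_1$ this vanishes since $g$ is constant there independently of $(\alpha,\beta)$, so $\rho(\lambda_1)=0$; at $\lambda_N$, the hard part, one argues with the block shapes: $g$, $g^{-1}$, hence $\rho,\rho'$, are lower-block-triangular with identity diagonal blocks and off-diagonal blocks of the special nilpotent shape exhibited in the text, while $\beta-y_N$ has the complementary shape shared by $b,b'$, and a short matrix computation then gives $\tr(g^{-1}\rho b')=\tr(g^{-1}\rho'b)=0$, $(\beta-y_N)b'=0$, $\tr(\rho\rho'(\beta-y_N))=0$, and $[y_N,\rho']$ again strictly lower-block-triangular so $\tr(\rho[y_N,\rho'])=0$; hence $\tr(\rho[\beta,\rho'])=0$ and the whole boundary contribution at $\lambda_N$ disappears. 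This yields $\omega^{g_\mathrm{reg}.\mathcal{B}_k}=\int_{\lambda_1}^{\lambda_N}\tr(\dv\alpha\wedge\dv\beta)\,\dv t$, and composing the four stages proves the claim.
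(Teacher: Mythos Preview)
Your proposal is correct and follows essentially the same route as the paper: the same factorisation through the four maps, the same observation that $g_\mathrm{reg}$ and $g_\mathrm{reg}^\mathrm{H}$ are point-independent so act by conjugation, the same reduction of the restriction-and-block stage using $\alpha|_{(\lambda_2(\varepsilon),\lambda_N)}=0$, and the same Bielawski-style integration by parts for the $g.$-stage with the block-shape argument at $\lambda_N$ killing the boundary term. There is nothing to add.
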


\begin{appendices}
\renewcommand{\thesection}{}

\section[Appendix]{}
\renewcommand{\thesection}{\Alph{section}}

\begin{lemma}\label{parallel transport solutions}
Let $((\alpha,\beta),(e_m^T,{v_{\mathrm{st}}}_2,\dots,{v_\mathrm{st}}_N))$ be a $k$-Nahm complex with $\res_{\lambda_1}(\alpha)=-x_1$ and $\res_{\lambda_1}(\beta)=y_1^T$ and that satisfies (ii) in the definition of a $k$-Nahm complex for $g=\idmat$. Then there exist unique smooth solutions on $\lambda$ to the following problems:
\begin{align}
&\begin{dcases}\dot{u}_1=-2\alpha u_1\\\lim_{z\searrow0}z^{-\frac{m-1}{2}}u_1(\lambda_1+z)=e_1\end{dcases}\label{problem for u2}\\
&\begin{dcases}\dot{u}_{i+1}=-2\alpha u_{i+1}\\\lim_{z\nearrow0}z^{-\frac{k_{N-i}-1}{2}}u_{i+1}(\lambda_N+z)={v_\mathrm{st}}_{i+1}\\
\forall j<N-i:\forall\ell\in\{m_{N-i-j}+1,\dots,m_{n-i-j}\}:\lim_{z\nearrow0}z^{-\frac{k_{N-j}-1}{2}}(u_{i+1})_\ell(\lambda_N+z)=0\end{dcases}\label{problem for u12}
%&\begin{dcases}\dot u_{11}=-2\alpha u_{11}\\\lim_{z\searrow0}z^{-\frac{k_1-1}{2}}u_{11}(\lambda_1+z)={v_\mathrm{st}}_{11}\end{dcases}\comma\\
%&\begin{dcases}\dot u_{12}=-2\alpha u_{12}\\\lim_{z\searrow 0}z^{-\frac{k_2-1}{2}}u_{12}(\lambda_1+z)={v_\mathrm{st}}_{12}\\\forall i\in\{1,\dots,k_1\}:\lim_{z\searrow 0}z^{-\frac{k_1-1}{2}}(u_{12})_i(\lambda_1+z)=0\end{dcases}\comma\\
%&\begin{dcases}\dot u_2=2u_2\alpha\\\lim_{z\nearrow0}z^{-\frac{m-1}{2}}u_2(\lambda_2+z)={v_\mathrm{st}}_2\end{dcases}\fullstop
\end{align}
\begin{proof}
Consider first the case where $m$ is even. Define $A(z)=-4z\sigma\alpha(\lambda_N+z^2)\sigma^{-1}$ with $\sigma$ conjugating $x_N$ to have decreasing diagonal entries and consider the equation
\begin{align*}
z\dot x=A(z)x\comma
\end{align*}
which has a singularity of the first kind at $0$. Using the result \cite[p.32, Theorem 6 and Remark 1]{balser1999} for such ODEs gives the existence of a fundamental matrix $X$ with
\begin{align*}
X(z)=T(z)z^Kz^M\comma
\end{align*}
for $z<0$ of small modulus such that
\begin{enumerate}[label=(\roman*), itemsep=0mm]
\item $T$ is an analytic transformation near $z=0$, i.e. it is analytic near $z=0$ with invertible constant term $T_0$, and $T_0=\idmat$,
\item $M$ is lower triangularly blocked of some type $(\nu_1,\dots,\nu_\mu)$,
\item the $\ell^\text{th}$ diagonal block of $M$ has exactly one eigenvalue $m_\ell$ with real part in the half-open interval $[0,1)$,
\item $K=\diag(\kappa_1\idmat_{\nu_1},\dots,\kappa_\mu\idmat_{\nu_\mu})$, $\kappa_j\in\Z$, weakly increasing and so that $m_\ell+\kappa_\ell$ is an eigenvalue of $-2x_N$ with algebraic multiplicity $\nu_\ell$.
\end{enumerate}
Since $-4\sigma x_N\sigma^{-1}$ has only integer eigenvalues, it follows that all $m_\ell$ vanish. Considering
\begin{align*}
z\dot T(z)T^{-1}(z)=A(z)-T(z)z^KMz^{-K}T^{-1}(z)-T(z)KT^{-1}(z)
\end{align*}
at $z=0$ implies $4\sigma x_N\sigma^{-1}+M+K=0$. Hence the $(i,i)$-block of $\sigma^{-1}K\sigma$ is $-4x(k_i)$ (as defined in section \ref{nahms equation and nahm complexes}), while $M$ vanishes. One obtains $\lim_{z\nearrow 0}X(z)z^{-K}=\idmat$ with $X(z)z^{-K}=\idmat+O(z)$, so that defining $Y(\lambda_N+z)=\sigma^{-1}X(z^{1/2})\sigma$ yields a fundamental matrix of
\begin{align*}
\dot u=-2\alpha u
\end{align*}
satisfying $\lim_{z\nearrow0}Y(\lambda_N+z)z^{2x_N}=\idmat$ with $Y(\lambda_N+z)z^{2x_N}=\idmat+z^{1/2}O(1)$.

Using the fundamental matrix $Y$, set $u_{i+1}=Y_{(\cdot)(m_{N+1-i}+1)}$ to obtain solutions to the problems
\begin{align*}
\begin{dcases}\dot u_{i+1}=-2\alpha u_{i+1}\\\lim_{z\nearrow0}z^{-\frac{k_{N-i}-1}{2}}u_{i+1}(\lambda_N+z)=v_{i+1}\end{dcases}\comma
\end{align*}
where $u_{N}$ is unique already. Since any solution to the equations is a linear combination of columns of $Y$, one can achieve
\begin{align*}
\forall j<N-i:\forall\ell\in\{m_{N-i-j}+1,\dots,m_{n-i-j}\}:\lim_{z\nearrow0}z^{-\frac{k_{N-j}-1}{2}}(u_{i+1})_\ell(\lambda_N+z)=0
\end{align*}
by adding to $u_{i+1}$ a linear combination of columns of $Y$. This given existence and uniqueness of a solution to the problem (\ref{problem for u12}). A similar argument yields the result for the case where $m$ is odd.

Existence and uniqueness of a solution to the problem (\ref{problem for u2}) is shown analogously.
\end{proof}
\end{lemma}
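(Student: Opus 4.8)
The plan is to analyse the linear system $\dot u=-2\alpha u$ separately near each endpoint of $\lambda$, reduce it there to an equation with a singularity of the first kind so that the classical structure theorem for such equations applies, read off the prescribed leading asymptotics from a suitably normalised fundamental matrix, and finally extend the germs so obtained to the whole interval by ordinary linear ODE theory.

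Consider first the situation near $\lambda_N$. The principal part of $\alpha$ there is $x_N/z$, but the remainder appearing in condition (ii) of the definition of a $k$-Nahm complex carries half-integer powers of $z$, so $\lambda_N$ is \emph{not} a regular singular point in the variable $z$. I would therefore pass to the variable $w$ with $z=w^2$, i.e.\ set $A(w)=-4w\,\sigma\alpha(\lambda_N+w^2)\sigma^{-1}$ with $\sigma$ chosen so that $\sigma x_N\sigma^{-1}$ has weakly decreasing diagonal, obtaining $w\dot x=A(w)x$, which genuinely has a singularity of the first kind at $w=0$ with analytic coefficient matrix (this is precisely what the extra factor of $w$ secures). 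The standard normal-form theorem for such systems (as in \cite{balser1999}) then produces a fundamental matrix $X(w)=T(w)w^Kw^M$ with $T$ analytic and $T_0=\idmat$, with $M$ lower-triangularly blocked so that each diagonal block carries a single eigenvalue of real part in $[0,1)$, and with $K$ an integer diagonal matrix such that the exponents collected in $M+K$ recover the eigenvalues of $A(0)=-4\sigma x_N\sigma^{-1}$. Since those eigenvalues are integers, the eigenvalues of $M$ must all vanish; comparing constant terms in $w\dot TT^{-1}=A-Tw^KMw^{-K}T^{-1}-TKT^{-1}$ at $w=0$ then gives $4\sigma x_N\sigma^{-1}+M+K=0$, which pins down $K$ block by block — its $(i,i)$-block being $-4x(k_i)$ — and, since $-4\sigma x_N\sigma^{-1}$ is even diagonalisable, forces $M\equiv0$. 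Hence $X(w)w^{-K}=\idmat+O(w)$, so that $Y(\lambda_N+z):=\sigma^{-1}X(z^{1/2})\sigma$ is a fundamental matrix of $\dot u=-2\alpha u$ near $\lambda_N$ with $Y(\lambda_N+z)z^{2x_N}=\idmat+z^{1/2}O(1)$.

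With $Y$ at hand I would obtain the solutions of \eqref{problem for u12} as corrected columns of $Y$: the column indexed by $m_{N+1-i}+1$ has leading behaviour $z^{(k_{N-i}-1)/2}{v_\mathrm{st}}_{i+1}$, hence already satisfies the first two lines; the components singled out in the third line sit at the strictly larger exponents $z^{(k_{N-j}-1)/2}$ with $j<N-i$, and since $k_1\geq\dots\geq k_n$ orders these exponents, adjusting $u_{i+1}$ by the unique linear combination of the columns of $Y$ carrying those larger exponents kills exactly the listed components without disturbing the leading term — giving both existence and uniqueness. The solution $u_N$ carries the largest exponent, needs no correction, and is the unique such solution; the remaining $u_{i+1}$ follow the same way. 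The case of odd $m$ is identical (the exponents are then genuine half-integers, but the substitution $z=w^2$ and the argument are unchanged), and the analogous analysis near $\lambda_1$, using $\res_{\lambda_1}(\alpha)=-x_1$, yields $u_1$ and its uniqueness for \eqref{problem for u2}. Finally, on the interior of $\lambda$ the coefficient $\alpha$ is smooth, so by the usual existence–uniqueness theorem for linear ODEs these germs extend uniquely to smooth solutions on all of $\lambda$, and uniqueness of the germ at the relevant endpoint propagates to global uniqueness.

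The step I expect to be the main obstacle is the first one: taming the half-integer powers. One must check that the square-root substitution really does produce a singularity of the first kind and, more delicately, that the integer-eigenvalue bookkeeping in the normal-form theorem genuinely forces the logarithmic block $M$ to vanish — otherwise the prescribed pure-power asymptotics in \eqref{problem for u2} and \eqref{problem for u12} would pick up spurious $\log$ terms. A secondary subtlety is the column-matching: verifying that the triangular structure of $Y$ relative to the ordered exponents makes the "vanishing of lower components" corrections simultaneously solvable and unique, which is exactly where the hypotheses that (ii) holds with $g=\idmat$ and that $v_\mathrm{st}$ has its specific form are used.
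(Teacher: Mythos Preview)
Your proposal is correct and follows essentially the same approach as the paper: the square-root substitution $z=w^2$ to obtain a genuine first-kind singularity, the Balser normal form $X=Tw^Kw^M$, the integer-eigenvalue argument forcing $M=0$, the resulting fundamental matrix $Y$ with $Yz^{2x_N}\to\idmat$, and the column-selection-plus-correction to produce and uniquely determine the $u_{i+1}$. The paper organises the case split as ``$m$ even first, $m$ odd similarly'' rather than treating them uniformly as you do, but this is cosmetic.
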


\begin{lemma}\label{conjugation to normal form}
\begin{align*}
&\left(\small\begin{matrix}
z^{m-1}q_{2}&\dots&zq_m&1\\
\vdots&\iddots&1\\
zq_m&\iddots\\
1
\end{matrix}\right)
\left(\small\begin{matrix}0&z^{-1}\\
&\ddots&\ddots\\
&&0&z^{-1}\\
-z^{m-1}q_1&\dots&-zq_{m-1}&-q_m
\end{matrix}\right)
\left(\small\begin{matrix}
z^{m-1}q_{2}&\dots&zq_m&1\\
\vdots&\iddots&1\\
zq_m&\iddots\\
1
\end{matrix}\right)^{-1}\\
&\quad=\left(\small\begin{matrix}
0&&&-z^{m-1}q_1\\
z^{-1}&\ddots&&\vdots\\
&\ddots&0&-zq_{m-1}\\
&&z^{-1}&-q_m
\end{matrix}\right)\fullstop
\end{align*}
\begin{proof}
Compute
\begin{align*}
&\left(\small\begin{matrix}
z^{m-1}q_{2}&\dots&zq_m&1\\
\vdots&\iddots&1\\
zq_m&\iddots\\
1
\end{matrix}\right)
\left(\small\begin{matrix}0&z^{-1}\\
&\ddots&\ddots\\
&&0&z^{-1}\\
-z^{m-1}q_1&\dots&-zq_{m-1}&-q_m
\end{matrix}\right)\\
&\quad=\left(\small\begin{matrix}
-z^{m-1}q_1&0&\dots&0&0\\
0&z^{m-3}q_3&\dots&q_m&z^{-1}\\
\vdots&\vdots&\iddots&z^{-1}\\
0&q_m&\iddots\\
0&z^{-1}
\end{matrix}\right)
\end{align*}
and
\begin{align*}
&\left(\small\begin{matrix}
0&&&-z^{m-1}q_1\\
z^{-1}&\ddots&&\vdots\\
&\ddots&0&-zq_{m-1}\\
&&z^{-1}&-q_m
\end{matrix}\right)
\left(\small\begin{matrix}
z^{m-1}q_{2}&\dots&zq_m&1\\
\vdots&\iddots&1\\
zq_m&\iddots\\
1
\end{matrix}\right)\\
&\quad=\left(\small\begin{matrix}
-z^{m-1}q_1&0&\dots&0&0\\
0&z^{m-3}q_3&\dots&q_m&z^{-1}\\
\vdots&\vdots&\iddots&z^{-1}\\
0&q_m&\iddots\\
0&z^{-1}
\end{matrix}\right)\comma
\end{align*}
so that
\begin{align*}
\left|\small\begin{matrix}
z^{m-1}q_{2}&\dots&zq_m&1\\
\vdots&\iddots&1\\
zq_m&\iddots\\
1
\end{matrix}\right|=\pm 1
\end{align*}
implies the claim.
\end{proof}
\end{lemma}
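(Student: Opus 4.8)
The plan is to reduce the asserted conjugation identity to two elementary facts: invertibility of the conjugating matrix, and symmetry of an auxiliary product. Denote by $P$ the outer matrix, by $C$ the middle matrix, and by $D$ the matrix on the right-hand side. The key structural observation is that $P$ is a Hankel (anti-triangular) matrix whose $(i,j)$-entry depends only on $i+j$, with the constant value $1$ all along the anti-diagonal and zeros strictly below it; in particular $P=P^T$ and, expanding along the anti-diagonal, $\det P=\pm1$, so that $P\in\GL{m}{\C}$ and the left-hand side is well defined.

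Next, observe that $D=C^T$: the $z^{-1}$ superdiagonal of $C$ becomes the $z^{-1}$ subdiagonal of $D$, and the bottom row $(-z^{m-1}q_1,\dots,-q_m)$ of $C$ becomes the last column of $D$. Since $P$ is symmetric, the claim $PCP^{-1}=D=C^T$ is equivalent to $PC=C^TP=(PC)^T$, that is, to the single statement that the product $PC$ is symmetric. So it suffices to compute $PC$ once and read off that it is symmetric; concretely one multiplies out $PC$ and, independently, $DP$, and checks that the two products coincide.

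To carry out that computation cleanly, write $P_{ij}=c_{i+j}$ with $c_s=z^{m+1-s}q_s$ for $2\le s\le m$, $c_{m+1}=1$ and $c_s=0$ for $s\ge m+2$. Using that the only nonzero entries of $C$ are the superdiagonal $z^{-1}$'s and the bottom row $C_{mj}=-z^{m-j}q_j$, one finds that $(PC)_{ij}$ equals $z^{-1}c_{i+j-1}$ when $j\ge2$, plus a correction $-z^{m-j}q_j$ present only in the first row (coming from the bottom row of $C$ together with $P_{1m}=1$); for $i=1$ and $j\ge2$ the identity $z^{-1}c_j=z^{m-j}q_j$ makes these two contributions cancel, so the first row of $PC$ is $(-z^{m-1}q_1,0,\dots,0)$, while off the first row and column $(PC)_{ij}$ depends only on $i+j$. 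Hence $PC$ is symmetric. I do not anticipate any genuine obstacle here: this is a finite matrix identity, and the only point requiring care is bookkeeping of the powers of $z$ in the Hankel pattern. The observations $D=C^T$ and $P=P^T$ are precisely what turn the verification into a one-line symmetry check together with $\det P=\pm1$.
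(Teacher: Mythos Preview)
Your argument is correct and in substance identical to the paper's: both verify $PC=DP$ by direct multiplication and use $\det P=\pm1$ for invertibility. Your added observations that $P=P^T$ and $D=C^T$ are a pleasant streamlining, since they reduce the identity $PC=DP$ to the single symmetry check $(PC)^T=PC$, halving the explicit computation; the paper instead multiplies out both $PC$ and $DP$ and compares the results entrywise.
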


\begin{lemma}\label{power -1/2}
If $((\alpha,\beta),v)$ is a $k$-Nahm complex, then there exists an equivalent $k$-Nahm complex $((\alpha',\beta'),v')$ such that the power $-\frac{1}{2}$-term in the expansion of $F(\alpha',\beta')$ near $\lambda_N$ vanishes.
\begin{proof}
The only case where power $-\frac{1}{2}$-terms in $F(\alpha,\beta)$ may appear in the $(i,j)$-block and the $(j,i)$-block for $k_j-k_i=1$. Consider first only the case $n=2$ and $k_1-k_2=1$. By proposition \ref{normal form}, assume that $(\alpha,\beta)$ is in normal form with $(\cdot-\lambda_N)^{-2x_N}.(0,\beta_0)=(\alpha,\beta)$ near $\lambda_N$ and
\begin{align*}
\beta_0=\left(\small\begin{matrix}0&&&&C_{11}&&&&&C_{12}\\
1&0\\
&\ddots&\ddots&&\vdots&&&&&\vdots\\
&&1&0\\
&&&1&C_{k_21}&&&&&C_{k_22}\\
&&&&&0\\
&&&&\vdots&1&0&&&\vdots\\
&&&&&&\ddots&\ddots\\
&&&&C_{(m-1)1}&&&1&0&C_{(m-1)2}\\
&&&&0&&&&1&C_{m2}\end{matrix}\right)\fullstop
\end{align*}
Denoting by $E_{ij}$ the $m\times m$ matrix with $(E_{ij})_{i'j'}=\delta_{ii'}\delta_{jj'}$, define smooth $\GL{m}{\C}$-valued functions $g$ and $g'$ on $\lambda$ with $g\equiv\idmat$ and $g'\equiv\idmat$ near $\lambda_1$ and
\begin{align*}
&g(\lambda_N+z)=\idmat+\sum_{i=1}^{k_2-1}\mu_iE_{(m+1-i)(k_2-i)}z^\frac{3}{2}+\sum_{i=1}^{k_1-2}\nu_iE_{(k_2+1-i)(m-1-i)}z^\frac{3}{2}\comma\\
&g'(\lambda_1+z)\\
&\quad=\idmat+\sum_{j=1}^{\floor*{\frac{k_1}{3}}}\sum_{i=1}^{k_1-3j}\left(\prod_{l=0}^{j-1}(-\mu_{i+3l})\right)\left(\prod_{o=0}^{j-1}(-\nu_{i+1+3o})\right)E_{(m+1-i)(m+1-i-3j)}z^{3j}\\
&\phantom{\quad{}=\idmat}+\sum_{j=0}^{\floor*{\frac{k_1}{3}}-1}\sum_{i=1}^{k_1-2-3j}\left(\prod_{l=0}^{j}(-\mu_{i+3l})\right)\left(\prod_{o=0}^{j-1}(-\nu_{i+1+3o})\right)E_{(m+1-i)(k_2-i-3j)}z^{3j+\frac{3}{2}}\\
&\phantom{\quad{}=\idmat}+\sum_{j=0}^{\floor*{\frac{k_1}{3}}-1}\sum_{i=1}^{k_1-2-3j}\left(\prod_{l=0}^{j-1}(-\mu_{i+2+3l})\right)\left(\prod_{o=0}^j(-\nu_{i+3o})\right)E_{(k_2+1-i)(m-1-i-3j)}z^{3j+\frac{3}{2}}\\
&\phantom{\quad{}=\idmat}+\sum_{j=1}^{\floor*{\frac{k_1}{3}}}\sum_{i=1}^{k_2-3j}\left(\prod_{l=0}^{j-1}(-\mu_{i+2+3l})\right)\left(\prod_{o=0}^{j-1}(-\nu_{i+3o})\right)E_{(k_2+1-i)(k_2+1-i-3j)}z^{3j}
\end{align*}
for $z<0$ of small modulus and for to be determined constants $\mu_i\in\C$ and $\nu_i\in\C$. Compute for $z<0$ of small modulus
\begin{align*}
&g(\lambda_N+z)g'(\lambda_N+z)\\
&=\idmat+\sum_{j=1}^{\floor*{\frac{k_1}{3}}}\sum_{i=1}^{k_1-3j}\left(\prod_{l=0}^{j-1}(-\mu_{i+3j})\right)\left(\prod_{o=0}^{j-1}(-\nu_{i+1+3o})\right)E_{(m+1-i)(m+1-i-3j)}z^{3j}\\
&\phantom{{}={}}+\sum_{j=1}^{\floor*{\frac{k_1}{3}}-1}\sum_{i=1}^{k_1-2-3j}\left(\prod_{l=0}^j(-\mu_{i+3l})\right)\left(\prod_{o=0}^{j-1}(-\nu_{i+1+3o})\right)E_{(m+1-i)(k_2-i-3j)}z^{3j+\frac{3}{2}}\\
&\phantom{{}={}}+\sum_{j=1}^{\floor*{\frac{k_1}{3}}-1}\sum_{i=1}^{k_1-2-3j}\left(\prod_{l=0}^{j-1}(-\mu_{i+2+3l})\right)\left(\prod_{o=0}^j(-\nu_{i+3o})\right)E_{(k_2+1-i)(m-1-i-3j)}z^{3j+\frac{3}{2}}\\
&\phantom{{}={}}+\sum_{j=1}^{\floor*{\frac{k_1}{3}}}\sum_{i=1}^{k_2-3j}\left(\prod_{l=0}^{j-1}(-\mu_{i+2+3l})\right)\left(\prod_{o=0}^{j-1}(-\nu_{i+3o})\right)E_{(k_2+1-i)(k_2+1-i-3j)}z^{3j}\\
&\phantom{{}={}}+\sum_{j=1}^{\floor*{\frac{k_1}{3}}-1}\sum_{i=3}^{k_1-3j}\nu_{i-2}\left(\prod_{l=0}^{j-1}(-\mu_{i+3j})\right)\left(\prod_{o=0}^{j-1}(-\nu_{i+1+3o})\right)E_{(k_2+1-i+2)(m+1-i-3j)}z^{3j+\frac{3}{2}}\\
&\phantom{{}={}}+\sum_{j=0}^{\floor*{\frac{k_1}{3}}-1}\sum_{i=3}^{k_1-2-3j}\nu_{i-2}\left(\prod_{l=0}^j(-\mu_{i+3l})\right)\left(\prod_{o=0}^{j-1}(-\nu_{i+1+3o})\right)E_{(k_2+1-i+2)(k_2-i-3j)}z^{3j+3}\\
&\phantom{{}={}}+\sum_{j=0}^{\floor*{\frac{k_1}{3}}-1}\sum_{i=2}^{k_1-2-3j}\mu_{i-1}\left(\prod_{l=0}^{j-1}(-\mu_{i+2+3l})\right)\left(\prod_{o=0}^j(-\nu_{i+3o})\right)E_{(m+2-i)(m-1-i-3j)}z^{3j+3}\\
&\phantom{{}={}}+\sum_{j=1}^{\floor*{\frac{k_1}{3}}-1}\sum_{i=2}^{k_2-3j}\mu_{i-1}\left(\prod_{l=0}^{j-1}(-\mu_{i+2+3l})\right)\left(\prod_{o=0}^{j-1}(-\nu_{i+3o})\right)E_{(m+2-i)(k_2+1-i-3j)}z^{3j+\frac{3}{2}}\\
&=\idmat+\sum_{j=1}^{\floor*{\frac{k_1}{3}}}\sum_{i=1}^{k_1-3j}\left(\prod_{l=0}^{j-1}(-\mu_{i+3j})\right)\left(\prod_{o=0}^{j-1}(-\nu_{i+1+3o})\right)E_{(m+1-i)(m+1-i-3j)}z^{3j}\\
&\phantom{{}={}}+\sum_{j=1}^{\floor*{\frac{k_1}{3}}-1}\sum_{i=1}^{k_1-2-3j}\left(\prod_{l=0}^j(-\mu_{i+3l})\right)\left(\prod_{o=0}^{j-1}(-\nu_{i+1+3o})\right)E_{(m+1-i)(k_2-i-3j)}z^{3j+\frac{3}{2}}\\
&\phantom{{}={}}+\sum_{j=1}^{\floor*{\frac{k_1}{3}}-1}\sum_{i=1}^{k_1-2-3j}\left(\prod_{l=0}^{j-1}(-\mu_{i+2+3l})\right)\left(\prod_{o=0}^j(-\nu_{i+3o})\right)E_{(k_2+1-i)(m-1-i-3j)}z^{3j+\frac{3}{2}}\\
&\phantom{{}={}}+\sum_{j=1}^{\floor*{\frac{k_1}{3}}}\sum_{i=1}^{k_2-3j}\left(\prod_{l=0}^{j-1}(-\mu_{i+2+3l})\right)\left(\prod_{o=0}^{j-1}(-\nu_{i+3o})\right)E_{(k_2+1-i)(k_2+1-i-3j)}z^{3j}\\
&\phantom{{}={}}+\sum_{j=1}^{\floor*{\frac{k_1}{3}}-1}\sum_{i=1}^{k_1-2-3j}\nu_{i}\left(\prod_{l=0}^{j-1}(-\mu_{i+2+3j})\right)\left(\prod_{o=1}^{j}(-\nu_{i+3o})\right)E_{(k_2+1-i)(m-1-i-3j)}z^{3j+\frac{3}{2}}\\
&\phantom{{}={}}+\sum_{j=1}^{\floor*{\frac{k_1}{3}}}\sum_{i=1}^{k_2-3j}\nu_{i}\left(\prod_{l=0}^{j-1}(-\mu_{i+2+3l})\right)\left(\prod_{o=1}^{j-1}(-\nu_{i+3o})\right)E_{(k_2+1-i)(k_2+1-i-3j)}z^{3j}\\
&\phantom{{}={}}+\sum_{j=1}^{\floor*{\frac{k_1}{3}}}\sum_{i=1}^{k_1-3j}\mu_{i}\left(\prod_{l=1}^{j-1}(-\mu_{i+3l})\right)\left(\prod_{o=0}^{j-1}(-\nu_{i+1+3o})\right)E_{(m+1-i)(m+1-i-3j)}z^{3j}\\
&\phantom{{}={}}+\sum_{j=1}^{\floor*{\frac{k_1}{3}}-1}\sum_{i=1}^{k_1-2-3j}\mu_{i}\left(\prod_{l=1}^{j}(-\mu_{i+3l})\right)\left(\prod_{o=0}^{j-1}(-\nu_{i+3o})\right)E_{(m+1-i)(k_2-i-3j)}z^{3j+\frac{3}{2}}\\
&=\idmat\comma
\end{align*}
so that $g'=g^{-1}$ near $\lambda_N$. Set now $(\alpha',\beta')=g.(\alpha,\beta)$, so that
\begin{align*}
&\alpha'(\lambda_N+z)=\frac{x_N}{z}+\sum_{i=1}^{k_2-1}\mu_iE_{(m+1-i)(k_2-i)}x_Nz^{\frac{1}{2}}+\sum_{i=1}^{k_1-2}\nu_iE_{(k_2+1-i)(m-1-i)}x_Nz^{\frac{1}{2}}\\
&\phantom{\alpha'(\lambda_N+z)={}}+\sum_{i=1}^{k_1-2}(-\mu_i)x_NE_{(m+1-i)(k_2-i)}z^{\frac{1}{2}}+\sum_{i=1}^{k_1-2}(-\nu_i)x_NE_{(k_2+1-i)(m-1-i)}z^{\frac{1}{2}}\\
&\phantom{\alpha'(\lambda_N+z)={}}-\frac{1}{2}\sum_{i=1}^{k_2-1}\frac{3\mu_i}{2}E_{(m+1-i)(k_2-i)}z^{\frac{1}{2}}-\frac{1}{2}\sum_{i=1}^{k_1-2}\frac{3\nu_i}{2}E_{(k_2+1-i)(m-1-i)}z^{\frac{1}{2}}+O(z^\frac{3}{2})\\
&\quad=\frac{a_1}{z}+\sum_{i=1}^{k_1-2}\left(\frac{k_2-1}{4}-\frac{i}{2}-\frac{k_1-1}{4}+\frac{i-1}{2}-\frac{3}{4}\right)\mu_iE_{(m+1-i)(k_2-i)}z^{\frac{1}{2}}\\
&\quad\phantom{{}={}}+\sum_{i=1}^{k_1-2}\left(\frac{k_1-1}{4}-\frac{i+1}{2}-\frac{k_2-1}{4}+\frac{i-1}{2}-\frac{3}{4}\right)\nu_iE_{(k_2+1-i)(m-1-i)}z^{\frac{1}{2}}\\
&\quad=\frac{a_1}{z}+\sum_{i=1}^{k_1-2}\left(-\frac{3\mu_i}{2}\right)E_{(m+1-i)(k_2-i)}z^{\frac{1}{2}}+\sum_{i=1}^{k_1-2}\left(-\frac{3\nu_i}{2}\right)E_{(k_2+1-i)(m-1-i)}z^{\frac{1}{2}}+O(z^{\frac{3}{2}})\comma\\
&\beta'(\lambda_N+z)=\frac{y_N}{z}+\sum_{i=1}^{k_2-1}\mu_iE_{(m+1-i)(k_2-i)}y_Nz^{\frac{1}{2}}+\sum_{i=1}^{k_1-2}\nu_iE_{(k_2+1-i)(m-1-i)}y_Nz^{\frac{1}{2}}\\
&\phantom{\beta'(\lambda_N+z)={}}+\sum_{i=1}^{k_1-2}(-\mu_i)y_NE_{(m+1-i)(k_2-i)}z^{\frac{1}{2}}+\sum_{i=1}^{k_1-2}(-\nu_i)y_NE_{(k_2+1-i)(m-1-i)}z^{\frac{1}{2}}\\
&\phantom{\beta'(\lambda_N+z)={}}+C_{(m-1)1}E_{(m-1)k_2}z^{\frac{1}{2}}+C_{k_22}E_{k_2m}z^{\frac{1}{2}}+O(z^{\frac{3}{2}})\\
&\quad=\frac{y_N}{z}+\sum_{i=1}^{k_1-3}(\mu_i-\mu_{i+1})E_{(m+1-i)(k_2-i-1)}z^{\frac{1}{2}}+\sum_{i=1}^{k_1-3}(\nu_i-\nu_{i+1})E_{(k_2+1-i)(m-1-i-1)}z^{\frac{1}{2}}\\
&\quad\phantom{{}={}}+C_{(m-1)1}E_{(m-1)k_2}z^{\frac{1}{2}}+C_{k_22}E_{k_2m}z^{\frac{1}{2}}+O(z^{\frac{3}{2}})\comma
\end{align*}
for $z<0$ of small modulus. Denoting the corresponding power $\frac{1}{2}$-coefficients by $\alpha'_{1/2}$ and $\beta'_{1/2}$, the power $-\frac{1}{2}$-coefficient $F(\alpha',\beta')_{-1/2}$ in $F(\alpha',\beta')(\lambda_N+z)$ is given by
\begin{align*}
&F(\alpha',\beta')_{-1/2}=\frac{\alpha'_{1/2}+\alpha'^*_{1/2}}{2}+2\left(\big[x_N,\alpha'^*_{1/2}\big]+\big[\alpha'_{1/2},x_N^*\big]+\big[y_N,\beta'^*_{1/2}\big]+\big[\beta'_{1/2},y_N^*\big]\right)\\
&\quad=\left(\frac{\alpha'_{1/2}}{2}+2\left(\big[\alpha'_{1/2},x_N^*\big]+\big[\beta'_{1/2},y_N^*\big]\right)\right)+\left(\frac{\alpha'_{1/2}}{2}+2\left(\big[\alpha'_{1/2},x_N^*\big]+\big[\beta'_{1/2},y_N^*\big]\right)\right)^*\comma
\end{align*}
where
\begin{align*}
&\frac{\alpha'_{1/2}}{2}+2\left(\big[\alpha'_{1/2},x_N^*\big]+\big[\beta'_{1/2},y_N^*\big]\right)\\
&\quad=\sum_{i=1}^{k_1-2}\left(-\frac{3\mu_i}{4}\right)E_{(m+1-i)(k_2-i)}+\sum_{i=1}^{k_1-2}\left(-\frac{3\nu_i}{4}\right)E_{(k_2+1-i)(m-1-i)}\\
&\quad\phantom{{}={}}+\sum_{i=1}^{k_1-2}\left(-3\mu_i\right)\big[E_{(m+1-i)(k_2-i)},x_N^*\big]+\sum_{i=1}^{k_1-2}\left(-3\nu_i\right)\big[E_{(k_2+1-i)(m-1-i)},x_N^*\big]\\
&\quad\phantom{{}={}}+\sum_{i=1}^{k_1-3}2(\mu_i-\mu_{i+1})\big[E_{(m+1-i)(k_2-i-1)},y_N^*\big]\\
&\quad\phantom{{}={}}+\sum_{i=1}^{k_1-3}2(\nu_i-\nu_{i+1})\big[E_{(k_2+1-i)(m-1-i-1)},y_N^*\big]\\
&\quad\phantom{{}={}}+2C_{(m-1)1}\big[E_{(m-1)k_2},y_N^*\big]+2C_{k_22}\big[E_{k_2m},y_N^*\big]\\
&\quad=\sum_{i=1}^{k_1-2}\left(-\frac{3}{4}+\frac{9}{4}\right)\mu_iE_{(m+1-i)(k_2-i)}+\sum_{i=1}^{k_1-2}\left(-\frac{3}{4}+\frac{9}{4}\right)\nu_iE_{(k_2+1-i)(m-1-i)}\\
&\quad\phantom{{}={}}+\sum_{i=2}^{k_1-2}2(-\mu_{i+1}+2\mu_i-\mu_{i-1})E_{(m+1-i)(k_2-i)}\\
&\quad\phantom{{}={}}+2(\mu_{k_1-2}-\mu_{k_1-3})E_{(k_2+3)1}+2(\mu_1-\mu_2)E_{m(k_2-1)}\\
&\quad\phantom{{}={}}+\sum_{i=2}^{k_1-2}2(-\nu_{i+1}+2\nu_i-2\nu_{i-1})E_{(k_2+1-i)(m-1-i)}\\
&\quad\phantom{{}={}}+2(\nu_{k_1-2}-\nu_{k_1-3})E_{2(k_2+1)}+2(\nu_1-\nu_2)E_{k_2(m-2)}\\
&\quad\phantom{{}={}}-2C_{(m-1)1}E_{(m-2)k_2}-2C_{k_22}E_{(k_2-1)m}\\
&\quad=\sum_{i=2}^{k_1-3}\left(-2\mu_{i-1}+\frac{11}{2}\mu_i-2\mu_{i+1}\right)E_{(m+1-i)(k_2-i)}\\
&\quad\phantom{{}={}}+\left(\frac{7}{2}\mu_1-2\mu_2\right)E_{m(k_2-1)}+\left(\frac{7}{2}\mu_{k_1-2}-\mu_{k_1-3}\right)E_{(k_2+3)1}\\
&\quad\phantom{{}={}}+\sum_{i=2}^{k_1-3}\left(-2\nu_{i-1}+\frac{11}{2}\nu_i-2\nu_{i+1}\right)E_{(k_2+1-i)(m-i)}\\
&\quad\phantom{{}={}}+\left(\frac{7}{2}\nu_1-2\nu_2\right)E_{k_2(m-1)}+\left(\frac{7}{2}\nu_{k_1-2}-2\nu_{k_1-3}\right)E_{2(k_2+1)}\\
&\quad\phantom{{}={}}-2C_{(m-1)1}E_{(m-2)k_2}-2C_{k_22}E_{(k_2-1)m}\fullstop
\end{align*}
Hence,
\begin{align*}
&F(\alpha',\beta')_{-1/2}\\
&\quad=\sum_{i=2}^{k_1-3}\left(-2\mu_{i-1}+\frac{11}{2}\mu_i-2\mu_{i+1}\right)E_{(m+1-i)(k_2-i)}\\
&\quad\phantom{{}={}}+\left(\frac{7}{2}\mu_1-2\mu_2\right)E_{m(k_2-1)}+\left(\frac{7}{2}\mu_{k_1-2}-\mu_{k_1-3}\right)E_{(k_2+3)1}\\
&\quad\phantom{{}={}}+\sum_{i=2}^{k_1-3}\left(-2\nu_{i-1}+\frac{11}{2}\nu_i-2\nu_{i+1}\right)E_{(k_2+1-i)(m-i)}\\
&\quad\phantom{{}={}}+\left(\frac{7}{2}\nu_1-2\nu_2\right)E_{k_2(m-1)}+\left(\frac{7}{2}\nu_{k_1-2}-2\nu_{k_1-3}\right)E_{2(k_2+1)}\\
&\quad\phantom{{}={}}-2C_{(m-1)1}E_{(m-2)k_2}-2C_{k_22}E_{(k_2-1)m}\\
&\quad\phantom{{}={}}+\sum_{i=2}^{k_1-3}\left(-2\bar\mu_{i-1}+\frac{11}{2}\bar\mu_i-2\bar\mu_{i+1}\right)E_{(k_2-i)(m+1-i)}\\
&\quad\phantom{{}={}}+\left(\frac{7}{2}\bar\mu_1-2\bar\mu_2\right)E_{(k_2-1)m}+\left(\frac{7}{2}\bar\mu_{k_1-2}-\bar\mu_{k_1-3}\right)E_{1(k_2+3)}\\
&\quad\phantom{{}={}}+\sum_{i=2}^{k_1-3}\left(-2\bar\nu_{i-1}+\frac{11}{2}\bar\nu_i-2\bar\nu_{i+1}\right)E_{(m-1-i)(k_2+1-i)}\\
&\quad\phantom{{}={}}+\left(\frac{7}{2}\bar\nu_1-2\bar\nu_2\right)E_{(m-1-1)k_2}+\left(\frac{7}{2}\bar\nu_{k_1-2}-2\bar\nu_{k_1-3}\right)E_{(k_2+1)2}\\
&\quad\phantom{{}={}}-2\bar C_{(m-1)1}E_{k_2(m-2)}-2\bar C_{k_22}E_{m(k_2-1)}\\
&\quad=\sum_{i=2}^{k_1-3}\left(-2\mu_{i-1}+\frac{11}{2}\mu_i-2\mu_{i+1}\right)E_{(m+1-i)(k_2-i)}\\
&\quad\phantom{{}={}}+\sum_{i=2}^{k_1-3}\left(-2\bar\mu_{i-1}+\frac{11}{2}\bar\mu_i-2\bar\mu_{i+1}\right)E_{(k_2-i)(m+1-i)}\\
&\quad\phantom{{}={}}+\left(\frac{7}{2}\mu_1-2\mu_2-2\bar C_{k_22}\right)E_{m(k_2-1)}+\left(\frac{7}{2}\mu_{k_1-2}-\mu_{k_1-3}\right)E_{(k_2+3)1}\\
&\quad\phantom{{}={}}+\left(\frac{7}{2}\bar\mu_1-2\bar\mu_2-2C_{k_22}\right)E_{(k_2-1)m}+\left(\frac{7}{2}\bar\mu_{k_1-2}-\bar\mu_{k_1-3}\right)E_{1(k_2+3)}\\
&\quad\phantom{{}={}}+\sum_{i=2}^{k_1-3}\left(-2\nu_{i-1}+\frac{11}{2}\nu_i-2\nu_{i+1}\right)E_{(k_2+1-i)(m-1-i)}\\
&\quad\phantom{{}={}}+\sum_{i=2}^{k_1-3}\left(-2\bar\nu_{i-1}+\frac{11}{2}\bar\nu_i-2\bar\nu_{i+1}\right)E_{(m-1-i)(k_2+1-i)}\\
&\quad\phantom{{}={}}+\left(\frac{7}{2}\nu_1-2\nu_2-2\bar C_{(m-1)1}\right)E_{k_2(m-2)}+\left(\frac{7}{2}\nu_{k_1-2}-2\nu_{k_1-3}\right)E_{2(k_2+1)}\\
&\quad\phantom{{}={}}+\left(\frac{7}{2}\bar\nu_1-2\bar\nu_2-2C_{(m-1)1}\right)E_{(m-2)k_2}+\left(\frac{7}{2}\bar\nu_{k_1-2}-2\bar\nu_{k_1-3}\right)E_{(k_2+1)2}\comma
\end{align*}
so that the vanishing of this expression is equivalent to $\mu_i$ and $\nu_i$ solving
\[
\left\lbrace\begin{aligned}
\frac{7\mu_1}{2}-2\mu_2&=-2\bar{C}_{k_22}\\
-2\mu_1+\frac{11\mu_2}{2}-\mu_3&=0\\
&\:\vdots\\
-2\mu_{k_1-4}+\frac{11\mu_{k_1-3}}{2}-2\mu_{k_1-2}&=0\\
-2\mu_{k_1-3}+\frac{7\mu_{k_1-2}}{2}&=0
\end{aligned}\right.
\]
and
\[
\left\lbrace\begin{aligned}
\frac{7\nu_1}{2}-2\nu_2&=-2\bar{C}_{(m-1)1}\\
-\nu_1+\frac{11\nu_2}{2}-2\nu_3&=0\\
&\:\vdots\\
-\nu_{k_1-4}+\frac{11\nu_{k_1-3}}{2}-2\nu_{k_1-2}&=0\\
-2\nu_{k_1-3}+\frac{\nu_{k_1-2}}{2}&=0
\end{aligned}\right.\fullstop
\]
Denote now for $c\in\R$ and $\ell\in\N$ by $M_\ell(c)$ the $\ell\times\ell$ matrix given by
\begin{align*}
M_\ell(c)=\left(\begin{matrix}
c&-2\\
-2&\frac{11}{2}&-2\\
&-2&\frac{11}{2}&\ddots\\
&&\ddots&\ddots&-2\\
&&&-2&\frac{11}{2}&-2\\
&&&&-2&c
\end{matrix}\right)
\end{align*}
and consider for $\ell>3$
\begin{align*}
&\det(M_\ell(c))=\abs{\begin{matrix}
c&0&-2&0&\dots&0\\
0&c&0&\dots&0&-2\\
-2&0&\frac{11}{2}&-2\\
0&\vdots&-2&\frac{11}{2}&\ddots\\
\vdots&0&&\ddots&\ddots&-2\\
0&-2&&&-2&\frac{11}{2}\end{matrix}}\\
&\quad=\abs{\begin{matrix}c&0\\0&c\end{matrix}}\cdot\abs{\left(\begin{matrix}\frac{11}{2}&-2\\-2&\frac{11}{2}&\ddots\\&\ddots&\ddots&-2\\&&-2&\frac{11}{2}\end{matrix}\right)-\left(\begin{matrix}-2&0\\0&\vdots\\&0\\0&-2\end{matrix}\right)\left(\begin{matrix}\frac{1}{c}&0\\0&\frac{1}{c}\end{matrix}\right)\left(\begin{matrix}-2&0&\dots&0\\0&\dots&0&-2\end{matrix}\right)}\\
&=c^2\det(M_{\ell-2}(\tfrac{11}{2}-\tfrac{4}{c}))\fullstop
\end{align*}
Set $c_1=\frac{7}{2}$ and $c_i=\frac{11}{2}-\frac{4}{c_{i-1}}$ for $i\in\{2,\dots,\floor*{\frac{\ell}{2}}\}$, so that iteratively one obtains
\begin{align*}
\det(M_\ell(c_1))&=\begin{dcases}\left|\begin{matrix}c_{\floor*{\frac{\ell}{2}}}&-2\\-2&c_{\floor*{\frac{\ell}{2}}}\end{matrix}\right|\cdot\prod_{i=1}^{\floor*{\frac{\ell}{2}}-1}c_i^2&\text{, if }\ell\in 2\N\\\left|\begin{matrix}c_{\floor*{\frac{\ell}{2}}}&-2\\-2&\frac{11}{2}&-2\\&-2&c_{\floor*{\frac{\ell}{2}}}\end{matrix}\right|\cdot\prod_{i=1}^{\floor*{\frac{\ell}{2}}-1}c_i^2&\text{, if }\ell\in 2\N+1\end{dcases}\\
&=\begin{dcases}\left(c_{\floor*{\frac{\ell}{2}}}^2-4\right)\prod_{i=1}^{\floor*{\frac{\ell}{2}}-1}c_i^2&\text{, if }\ell\in2\N\\c_{\floor*{\frac{\ell}{2}}}\left(\frac{11}{2}c_{\floor*{\frac{\ell}{2}}}-8\right)\prod_{i=1}^{\floor*{\frac{\ell}{2}}-1}c_i^2&\text{, if }\ell\in2\N+1\end{dcases}\fullstop
\end{align*}
Note that assuming $c_i>c_{i-1}$ leads to $c_{i+1}=\frac{11}{2}-\frac{4}{c_i}<\frac{11}{2}-\frac{4}{c_{i-1}}=c_i$, so that with $c_2=\frac{11}{2}-\frac{8}{7}=\frac{61}{14}>\frac{49}{14}=\frac{7}{2}=c_1$ one obtains iteratively that $c_i\geq\frac{7}{2}$ for all $i\in\{1,\dots,\floor*{\frac{\ell}{2}}\}$ and hence $\det(M_{k_1-3}(c_1))>0$, so that the above systems admit solutions $\mu_i$ and $\nu_i$.

For the general case define $g$ for $k_i-k_j=1$ to be identity except the $(i,j)$- and $(j,i)$-block the same as the above gauge transformation. This transforms the $(i,j)$- and $(j,i)$-block of $F(\alpha,\beta)_{-1/2}$ to zero. Iterating this process yields the desired result, as additional terms that appear all vanish to high enough order.
\end{proof}
\end{lemma}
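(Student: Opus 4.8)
The plan is to localize the obstruction, reduce to a model block, and solve a linear system for a correcting gauge transformation. First I would note that in the expansion of the real-equation operator $F(\alpha,\beta)$ near $\lambda_N$ half-integer powers of $z$ enter only through the off-diagonal blocks of $\alpha$ and $\beta$, which vanish to order $\tfrac{|k_{N-i}-k_{N-j}|}{2}$; hence a power $-\tfrac12$-term can appear only in the $(i,j)$- and $(j,i)$-blocks for which $k_{N-i}-k_{N-j}=1$. Since such a pair of blocks couples to the rest of the matrix only at strictly higher order in $z$, it suffices to perform the construction one offending pair at a time, and in particular first in the model case $n=2$, $k_1-k_2=1$; the general case then follows by applying the construction to each such pair and checking that the terms it produces in the remaining blocks vanish to high enough order to be iterated away.

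For the model case I would use Proposition \ref{normal form} to assume that near $\lambda_N$ the complex $(\alpha,\beta)$ is in normal form, so $\alpha=x_N/z$ and $\beta$ is the conjugate by $z^{-2x_N}$ of an explicit constant matrix $\beta_0$. I would then look for a gauge transformation $g\in\mathcal{G}$ which is $\idmat$ near $\lambda_1$ and, near $\lambda_N$, of the form $g(\lambda_N+z)=\idmat+z^{3/2}G$, where $G$ is supported on the two off-diagonal blocks and depends linearly on a vector of unknowns $\mu_i,\nu_i$; invertibility near $\lambda_N$ is confirmed by exhibiting the explicit (finite) inverse obtained from the geometric series in $z^{3/2}G$. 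Because $g\equiv\idmat$ at both ends and the correction is $O(z^{3/2})$, applying $g.$ preserves the residues of $\alpha$ and $\beta$, the boundary condition (ii) of Definition \ref{def k nahm complex}, and, acting trivially at the ends, the vector data $v$. A short computation shows that $g.$ changes the power $\tfrac12$-coefficients of $\alpha$ and $\beta$ by $[G,x_N]-\tfrac34 G$ (from $g\alpha g^{-1}-\tfrac12\dot g g^{-1}$) and by $[G,y_N]$ (from $g\beta g^{-1}$), respectively.

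Finally I would feed $(\alpha',\beta')=g.(\alpha,\beta)$ into $F$ and isolate the power $-\tfrac12$-coefficient: because it naturally appears in the form $X+X^*$, matching the off-diagonal entries block by block turns the requirement that it vanish into two decoupled tridiagonal linear systems for the $\mu_i$ and for the $\nu_i$, the inhomogeneous terms being the entries $\bar C_{\bullet}$ of $\beta_0$ sitting next to the corner. The crux, and the step I expect to cost the most work, is solvability of these systems: their coefficient matrices are tridiagonal matrices $M_\ell(c)$ with interior diagonal $\tfrac{11}{2}$, corner entries $\tfrac72$, and sub/superdiagonal $-2$, and I would prove $\det M_\ell(\tfrac72)\neq0$ by the Schur-complement identity $\det M_\ell(c)=c^2\det M_{\ell-2}\!\left(\tfrac{11}{2}-\tfrac4c\right)$ together with the observation that the iterated sequence $c_1=\tfrac72$, $c_i=\tfrac{11}{2}-\tfrac4{c_{i-1}}$ stays $\geq\tfrac72$ (it increases after the first step), so the determinant is strictly positive. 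This produces the required $g$ and hence the equivalent $k$-Nahm complex; applying it blockwise to every pair with $k_{N-i}-k_{N-j}=1$ and iterating — each new correction landing in higher powers of $z$ — finishes the general statement.
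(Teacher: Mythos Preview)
Your proposal is correct and follows essentially the same approach as the paper: both localize the obstruction to off-diagonal blocks with $|k_{N-i}-k_{N-j}|=1$, reduce to the model case $n=2$ in normal form, apply a gauge transformation of the shape $\idmat+z^{3/2}G$ with $G$ supported on the two off-diagonal blocks, and reduce the vanishing of $F(\alpha',\beta')_{-1/2}$ to the same tridiagonal systems with interior diagonal $\tfrac{11}{2}$, corners $\tfrac72$, and off-diagonals $-2$, solved via the Schur-complement recursion $\det M_\ell(c)=c^2\det M_{\ell-2}(\tfrac{11}{2}-\tfrac{4}{c})$ and the lower bound $c_i\ge\tfrac72$. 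The only cosmetic difference is that the paper writes out the finite inverse $g'$ explicitly rather than invoking the geometric series.
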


\begin{lemma}\label{representation theory gamma}
Let $T$ be a solution to Nahm's equations (\ref{nahms eq}) such that for $z<0$ of small modulus
\begin{align*}
T_i(\lambda_N+z)=\frac{\diag\left(\tau_i^{k_n},\dots,\tau_i^{k_1}\right)}{z}+\left(\begin{matrix}z^{\gamma_{11}}O(1)&\dots&z^{\gamma_{1n}}O(1)\\
\vdots&\ddots&\vdots\\
z^{\gamma_{n1}}O(1)&\dots&z^{\gamma_{nn}}O(1)\end{matrix}\right)
\end{align*}
for $\gamma_{ij}\geq0$. Then $2\gamma_{ij}\geq\abs{k_{N-i}-k_{N-j}}$ are integers.
\begin{proof}
Define $T_i'(\lambda_N+z)=z^{-1}\diag\left(\tau_i^{k_n},\dots,\tau_i^{k_1}\right)$ and set
\begin{align*}
\Delta T_i(\lambda_1+z)=(T_i-T_i')(\lambda_N+z)=\left(\begin{matrix}z^{\gamma_{11}}\sum_{l=0}^\infty c_{il}^{11}z^l&\dots&z^{\gamma_{1n}}\sum_{l=0}^\infty c_{il}^{1n}z^l\\
\vdots&\ddots&\vdots\\
z^{\gamma_{n1}}\sum_{l=0}^\infty c_{il}^{n1}z^l&\dots&z^{\gamma_{nn}}\sum_{l=0}^\infty c_{il}^{nn}z^l
\end{matrix}\right)\fullstop
\end{align*}
Nahm's equations then yield
\begin{align*}
(\Delta T_i)\dot{\hphantom{)}}+[\Delta T_j,\Delta T_k]+[\Delta T_j,T_k']+[T_j',\Delta T_k]=0\comma
\end{align*}
$(ijk)$ cyclic permutations of $(123)$, so that for $i'>j'$ the $(i',j')$-block of the $z^{\gamma_{i'j'}-1}$ coefficient gives
\begin{align*}
-\gamma_{i'j'}\left(\begin{matrix}c_{10}^{i'j'}\\c_{20}^{i'j'}\\c_{30}^{i'j'}\end{matrix}\right)=\left(\begin{matrix}0&D_3^{i'j'}&-D_2^{i'j'}\\-D_3^{i'j'}&0&D_1^{i'j'}\\D_2^{i'j'}&-D_1^{i'j'}&0\end{matrix}\right)\left(\begin{matrix}c_{10}^{i'j'}\\c_{20}^{i'j'}\\c_{30}^{i'j'}\end{matrix}\right)\comma
\end{align*}
where $D_i^{i'j'}:=\tau_i^{k_{j'}}(\cdot)-(\cdot)\tau_i^{k_{i'}}$. Let $S^\ell$ be the standard representation of $\su{2}$ of dimension $\ell+1$ and note that in the identification of $\mat{k_{j'}\times k_{i'}}{\C}$ with $S^{k_{j'}-1}\otimes(S^{k_{i'}-1})^*$, $D^{i'j'}$ corresponds to the tensor product representation. In view of this fact, write
\begin{align*}
-\gamma_{i'j'}\left(\sum_{j=1}^3e_j\otimes c_{j0}^{i'j'}\right)=\left(\sum_{i=1}^3\tau_i^2\otimes D_i^{i'j'}\right)\left(\sum_{j=1}^3e_j\otimes c_{j0}^{i'j'}\right)\comma
\end{align*}
where $(e_1,e_2,e_3)$ denotes the standard basis of $S^2$, and
\begin{align*}
\sum_{i=1}^3\tau_i^2\otimes D_i^{i'j'}=\tfrac{1}{2}\left(C(S^2\otimes S^{k_{j'}-1}\otimes(S^{k_{i'}-1})^*)-\idmat\otimes C(S^{k_{j'}-1}\otimes(S^{k_{i'}-1})^*)-C(S^2)\otimes\idmat\right)
\end{align*}
with $C(\cdot)$ denoting the Casimir operator of the respective representation. Using the decompositions
\begin{align*}
S^{k_{j'}-1}\otimes(S^{k_{i'}-1})^*&\cong\bigoplus_{i=1}^{k_{i'}} S^{k_{j'}-k_{i'}+2(i-1)}\comma\\
S^2\otimes S^{k_{j'}-1}\otimes(S^{k_{i'}-1})^*&\cong\bigoplus_{i=1}^{k_{i'}}\left(S^{k_{j'}-k_{i'}+2i}\oplus S^{k_{j'}-k_{i'}+2(i-1)}\oplus S^{k_{j'}-k_{i'}+2(i-2)}\right)
\end{align*}
and $C(S^\ell)=-\frac{\ell(\ell+2)}{4}$, it follows that all eigenvalues of the operator $\sum_{i=1}^3\tau_i^2\otimes D_i^{i'j'}$ must be of the form
\begin{align*}
%&\frac{1}{2}\left(-\frac{(k_1-k_2+2(i-1))(k_1-k_2+2i)}{4}+\frac{(k_1-k_2+2(j-1))(k_1-k_2+2j)}{4}+\frac{2(2+2)}{4}\right)\\
%&\quad=
(j-i)\frac{k_{j'}-k_{i'}}{2}+\frac{j(j-1)}{2}-\frac{i(i-1)}{2}+1
\end{align*}
for $i\in\{0,\dots,k_{i'}+1\}$ and $j\in\{1,\dots,k_{i'}\}$. Since $-\gamma_{i'j'}$ is such an eigenvalue, $\gamma_{i'j'}$ may be written as
\begin{align*}
\gamma=(i-j)\frac{k_{j'}-k_{i'}}{2}+\frac{i(i-1)}{2}-\frac{j(j-1)}{2}-1\fullstop
\end{align*}
This expression is increasing $i\in\{j+1,\dots,k_{i'}+1\}$ for fixed $j\in\{1,\dots,k_{i'}\}$ with minimal non-negative value $\frac{k_{j'}-k_{i'}}{2}+j-1$, which implies $\gamma_{i'j'}\geq\frac{k_{j'}-k_{i'}}{2}$. An analogous argument for $\gamma_{j'i'}$ concludes the proof.
\end{proof}
\end{lemma}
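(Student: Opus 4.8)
The plan is to strip off the residue of $T$ and reduce the statement, block by block, to an eigenvalue problem governed by the representation theory of $\su 2$. Set $T_i'(\lambda_N+z)=z^{-1}\diag(\tau_i^{k_n},\dots,\tau_i^{k_1})$ and $\Delta T_i=T_i-T_i'$. Since $[\tau_j^\ell,\tau_k^\ell]=\tau_i^\ell$ for $(ijk)$ a cyclic permutation of $(123)$, a block-wise check gives $\dot{T_i'}+[T_j',T_k']=0$, so $T'$ itself solves Nahm's equations; subtracting this from (\ref{nahms eq}) yields
\[
\dot{\Delta T_i}+[\Delta T_j,T_k']+[T_j',\Delta T_k]+[\Delta T_j,\Delta T_k]=0\comma\qquad(ijk)\ \text{cyclic}\fullstop
\]
Fix block indices $a\neq b$ (the diagonal case is subsumed, with $\abs{k_{N-a}-k_{N-b}}=0$). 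By the discussion preceding the lemma, near $\lambda_N$ the $(a,b)$-block of each $\Delta T_i$ has an expansion whose exponents form a discrete subset of $\tfrac12\Z_{\geq0}$; I would prove, by induction on these exponents, that the coefficient of $z^p$ in the $(a,b)$-block vanishes whenever $p<\tfrac12\abs{k_{N-a}-k_{N-b}}$, which gives both $2\gamma_{ab}\geq\abs{k_{N-a}-k_{N-b}}$ and, the exponents being half-integers, $2\gamma_{ab}\in\Z$.

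For the inductive step, fix such a $p$ and let $c_i$ be the $z^p$-coefficient of the $(a,b)$-block of $\Delta T_i$. Because the diagonal blocks of $\Delta T$ are $O(1)$, the terms of $[\Delta T_j,\Delta T_k]_{ab}$ running through the intermediate index $a$ or $b$ are $O(z^p)$ and do not reach order $z^{p-1}$; a term through an index $c\neq a,b$ reaches order $z^{p-1}$ only if the $(a,c)$- and $(c,b)$-blocks carry nonzero coefficients at exponents $p',p''$ with $p'+p''=p-1$, and the inductive hypothesis then forces $p'\geq\tfrac12\abs{k_{N-a}-k_{N-c}}$ and $p''\geq\tfrac12\abs{k_{N-c}-k_{N-b}}$, whence $p\geq 1+\tfrac12\abs{k_{N-a}-k_{N-b}}$ by the triangle inequality --- excluded for our $p$. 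So the quadratic term drops out of the $z^{p-1}$-coefficient of the block-$(a,b)$ equation, leaving the homogeneous relations $p\,c_i=D_k(c_j)-D_j(c_k)$ for $(ijk)$ cyclic, where $D_i(X):=\tau_i^{k_{N-a}}X-X\tau_i^{k_{N-b}}$ is the natural $\su 2$-action on $V:=\mat{k_{N-a}\times k_{N-b}}{\C}$.

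The representation theory then closes it. Writing $V\cong S^{k_{N-a}-1}\otimes S^{k_{N-b}-1}$ with $S^\ell$ the $(\ell+1)$-dimensional irreducible $\su 2$-representation, picking a basis $(e_1,e_2,e_3)$ of the adjoint representation $S^2\cong\C^3$ and putting $\mathbf c:=\sum_i e_i\otimes c_i\in S^2\otimes V$, a short computation using $[\tau_j,\tau_k]=\tau_i$ turns the relations above into $\mathcal E\,\mathbf c=-p\,\mathbf c$ for $\mathcal E:=\sum_{i=1}^3\tau_i^{S^2}\otimes D_i$. Expanding the tensor-product Casimir, $\mathcal E=\tfrac12\bigl(\mathcal C(S^2\otimes\,\cdot\,)-\mathcal C(S^2)\otimes\idmat-\idmat\otimes\mathcal C|_V\bigr)$; combining the Clebsch--Gordan decompositions $V\cong\bigoplus_r S^{M_r}$ with $M_r=\abs{k_{N-a}-k_{N-b}}+2r$ and $S^2\otimes S^M\cong S^{M+2}\oplus S^M\oplus S^{M-2}$, together with $\mathcal C(S^\ell)=-\ell(\ell+2)/4$, one finds that $\mathcal E$ acts on the summands $S^{M_r+2},S^{M_r},S^{M_r-2}$ of $S^2\otimes S^{M_r}$ by the scalars $-M_r/2$, $1$, $(M_r+2)/2$. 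Every eigenvalue of $\mathcal E$ is therefore one of these, and the non-positive ones are exactly the $-M_r/2$; since $p\geq0$, if $\mathbf c\neq0$ then $p=M_r/2\geq\tfrac12\abs{k_{N-a}-k_{N-b}}$, contradicting the choice of $p$. Hence $\mathbf c=0$, the induction goes through, and transposed blocks are handled identically. The step I expect to be the main obstacle is exactly this control of the quadratic term $[\Delta T_j,\Delta T_k]$: one has to arrange the induction so that products of already-controlled blocks land at strictly higher order, and the quantitative input that makes this work --- with no counterpart in Donaldson's rank-one setting --- is the triangle inequality for the numbers $\abs{k_{N-a}-k_{N-b}}$.
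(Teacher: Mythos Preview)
Your argument is correct and follows the same representation-theoretic route as the paper: isolate the leading coefficient of the $(a,b)$-block, recognise the resulting linear relation as an eigenvalue problem for $\sum_i \tau_i^{S^2}\otimes D_i$ on $S^2\otimes V$, and read off the spectrum via the Casimir and Clebsch--Gordan. The paper simply extracts the $z^{\gamma_{i'j'}-1}$-coefficient and silently drops the quadratic term $[\Delta T_j,\Delta T_k]$, whereas you justify this by an induction on the exponent $p\in\tfrac12\Z_{\ge0}$ together with the triangle inequality $|k_{N-a}-k_{N-c}|+|k_{N-c}-k_{N-b}|\ge|k_{N-a}-k_{N-b}|$; this is a genuine gain in rigor over the paper's presentation.

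One clarification worth making explicit: the induction must run over \emph{all} blocks simultaneously (strong induction on $p$, proving the vanishing for every $(a,b)$ with $p<\tfrac12|k_{N-a}-k_{N-b}|$ at once), since your treatment of the cross terms through an index $c\neq a,b$ invokes the hypothesis for the $(a,c)$- and $(c,b)$-blocks. Your phrasing ``fix block indices $a\neq b$'' before announcing the induction slightly obscures this, but the argument goes through once read that way, because $p',p''\le p-1<p$ puts those exponents squarely in the range already handled.
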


\begin{lemma}\label{matrix to hurtubise normal form}
Let $i>j$ and let $B$ be a matrix with $(i',i')$-block of the form
\begin{align*}
\left(\begin{matrix}
0&&&C_{(m_{N-i'+1}+1)i'}\\
1&\ddots&&\vdots\\
&\ddots&0&C_{(m_{N-i'}-1)i'}\\
&&1&C_{m_{N-i'}i'}
\end{matrix}\right)\comma
\end{align*}
$(i',j')$-block for $i'<i$ and $j'<i'$, as well as $i'=i$ and $j'<j$, of the form
\begin{align*}
\left(\begin{matrix}
C_{(m_{N-i'+1}+k_{N-j'})j'}&\dots&C_{(m_{N-i'+1}+1)j'}\\
0&\dots&0\\
\vdots&\ddots&\vdots\\
0&\dots&0
\end{matrix}\right)\comma
\end{align*}
$(i',j')$-block for $i'>i$ and $j'<i'$, as well as $i'=i$ and $i>j'\geq j$, of the form
\begin{align*}
\left(\begin{matrix}
0&\dots&0&C_{(m_{N-i'+1}+1)j'}\\
\vdots&\ddots&\vdots&\vdots\\
0&\dots&0&C_{(m_{N-i'+1}+k_{N-j'})j'}\\
0&\dots&0&0\\
\vdots&\ddots&\vdots&\vdots\\
0&\dots&0&0
\end{matrix}\right)\comma
\end{align*}
and $(i',j')$-block for $j'>i'$ of the form
\begin{align*}
\left(\begin{matrix}
0&\dots&0&C_{(m_{N-i'+1}+1)j'}\\
\vdots&\ddots&\vdots&\vdots\\
0&\dots&0&C_{m_{N-i'}j'}
\end{matrix}\right)\fullstop
\end{align*}
Conjugation by
\begin{align*}
g_B=\idmat+\sum_{i'=1}^{k_{N-j}-1}\sum_{j'=1}^{k_{N-j}-i}\nu_{i'}E_{(m_{N-i+1}+j')(m_{N-j+1}+i'+j')}\comma
\end{align*}
where $\nu_{i'}$ is defined recursively by $\nu_{1}=C_{(m_{N-i+1}+k_{N-j})j}$ and
\begin{align*}
\nu_{i'}=C_{(m_{N-i+1}+k_{N-j}+1-i')j}+\sum_{j'=1}^{i'-1}C_{(m_{N-j}+1-j')j}\nu_{i'-j'}\comma
\end{align*}
changes the $(i,j)$-block of $B$ to the form where the only possibly non-vanishing row is the first, and possibly changes the last column of the $(i,j')$-block for $j'>j$ and the first row of the $(i,j')$ block for $j'<j$; everything else remains unchanged.
\begin{proof}
Note that the only block of $g_B$ different from the corresponding block of $\idmat$ is the $(i,j)$-block, which is given by
\begin{align*}
\left(\begin{matrix}
0&\nu_1&\dots&\nu_{k_{N-j}-1}\\
&&\ddots&\vdots\\
&&&\nu_1\\
0&0&\dots&0\\
\vdots&\vdots&\ddots&\vdots\\
0&0&\dots&0
\end{matrix}\right)\fullstop
\end{align*}
Moreover, the negative of that is the $(i,j)$-block of $g_B^{-1}$, which is the only block of $g_B^{-1}$ different from $\idmat$. It follows that $Bg_B^{-1}$ agrees with $B$ in all but the $(i,j)$-block, while the $(i,j)$-block is given by
\begin{align*}
\left(\begin{matrix}
0&\dots&0&C_{(m_{N-i+1}+1)j}\\
\vdots&\ddots&\vdots&\vdots\\
0&\dots&0&C_{(m_{N-i+1}+k_{N-j})j}\\
0&\dots&0&0\\
\vdots&\ddots&\vdots&\vdots\\
0&\dots&0&0
\end{matrix}\right)-
\left(\begin{matrix}
0\\
1&0\\
&\ddots&\ddots\\
&&1&0
\end{matrix}\right)
\left(\begin{matrix}
0&\nu_1&\dots&\nu_{k_{N-j}-1}\\
&&\ddots&\vdots\\
&&&\nu_1\\
0&0&\dots&0\\
\vdots&\vdots&\ddots&\vdots\\
0&0&\dots&0
\end{matrix}\right)\fullstop
\end{align*}
The $(i,j)$-block of $g_BBg_B^{-1}$ is then
\begin{align*}
&\left(\begin{matrix}
0&\dots&0&C_{(m_{N-i+1}+1)j}\\
\vdots&\ddots&\vdots&\vdots\\
0&\dots&0&C_{(m_{N-i+1}+k_{N-j})j}\\
0&\dots&0&0\\
\vdots&\ddots&\vdots&\vdots\\
0&\dots&0&0
\end{matrix}\right)-
\left(\begin{matrix}
0&0&\dots&0\\
0&\nu_1&\dots&\nu_{k_{N-j}-1}\\
&&\ddots&\vdots\\
&&&\nu_1\\
0&0&\dots&0\\
\vdots&\vdots&\ddots&\vdots\\
0&0&\dots&0
\end{matrix}\right)\\
&\quad\phantom{{}={}}+\left(\begin{matrix}
0&\nu_1&\dots&\nu_{k_{N-j}-1}\\
&&\ddots&\vdots\\
&&&\nu_1\\
0&0&\dots&0\\
\vdots&\vdots&\ddots&\vdots\\
0&0&\dots&0
\end{matrix}\right)
\left(\begin{matrix}
0&&&C_{(m_{N-j+1}+1)j}\\
1&\ddots&&\vdots\\
&\ddots&0&C_{(m_{N-j}-1)j}\\
&&1&C_{m_{N-j}j}
\end{matrix}\right)\\
&\quad=\left(\begin{matrix}
\nu_1&\dots&\nu_{k_{N-j}-1}&C_{(m_{N-i+1}+1)j}+\sum_{j'=1}^{k_{N-j}-1}\nu_{j'}C_{(m_{N-j+1}+j')j}\\
0&\dots&0&C_{(m_{N-i+1}+2)j}+\sum_{j'=1}^{k_{N-j}-2}\nu_{j'}C_{(m_{N-j+1}+j'+1)j}-\nu_{k_{N-j}-1}\\
\vdots&\ddots&\vdots&\vdots\\
0&\dots&0&C_{(m_{N-i+1}+k_{N-j})j}-\nu_1\\
0&\dots&0&0\\
\vdots&\ddots&\vdots&\vdots\\
0&\dots&0&0
\end{matrix}\right)\\
&\quad=\left(\begin{matrix}
\nu_1&\dots&\nu_{k_{N-j}-1}&C_{(m_{N-i+1}+1)j}+\sum_{j'=1}^{k_{N-j}-1}\nu_{j'}C_{(m_{N-j+1}+j')j}\\
0&\dots&0&0\\
\vdots&\ddots&\vdots&\vdots\\
0&\dots&0&0
\end{matrix}\right)\comma
\end{align*}
where in the last step it was used that from the recursive definition of the $\nu_{i'}$ one obtains
\begin{align*}
\nu_{k_{N-j}-i'}&=C_{(m_{N-i+1}+i'+1)j}+\sum_{j'=1}^{k_{N-j}-i'-1}C_{(m_{N-j}+1-j')j}\nu_{k_{N-j}-i'-j'}\\
&=C_{(m_{N-i+1}+i'+1)j}+\sum_{j'=1}^{k_{N-j}-i'-1}\nu_{j'}C_{(m_{N-j+1}+1+j'-i')j}\fullstop
\end{align*}
Finally, observing that the $(i,j')$-block of $g_BBg_B^{-1}$ for $j'\neq j$ differs from the one of $Bg_B^{-1}$ only by
\begin{align*}
\left(\begin{matrix}
0&\nu_1&\dots&\nu_{k_{N-j}-1}\\
&&\ddots&\vdots\\
&&&\nu_1\\
0&0&\dots&0\\
\vdots&\vdots&\ddots&\vdots\\
0&0&\dots&0
\end{matrix}\right)
\left(\begin{matrix}
C_{(m_{N-i+1}k_{N-j'})j'}&\dots&C_{(m_{N-i+1}+1)j'}\\
0&\dots&0\\
\vdots&\ddots&\vdots\\
0&\dots&0
\end{matrix}\right)=0\comma
\end{align*}
if $j'<j$, by
\begin{align*}
\left(\begin{matrix}
0&\nu_1&\dots&\nu_{k_{N-j}-1}\\
&&\ddots&\vdots\\
&&&\nu_1\\
0&0&\dots&0\\
\vdots&\vdots&\ddots&\vdots\\
0&0&\dots&0
\end{matrix}\right)
\left(\begin{matrix}
0&\dots&0&C_{(m_{N-i+1}+1)j'}\\
\vdots&\ddots&\vdots&\vdots\\
0&\dots&0&C_{(m_{N-i+1}+k_{N-j'})j'}\\
0&\dots&0&0\\
\vdots&\ddots&\vdots&\vdots\\
0&\dots&0&0
\end{matrix}\right)
\end{align*}
if $j<j'<i$, and by
\begin{align*}
\left(\begin{matrix}
0&\nu_1&\dots&\nu_{k_{N-j}-1}\\
&&\ddots&\vdots\\
&&&\nu_1\\
0&0&\dots&0\\
\vdots&\vdots&\ddots&\vdots\\
0&0&\dots&0
\end{matrix}\right)
\left(\begin{matrix}
0&\dots&0&C_{(m_{N-i+1}+1)j'}\\
\vdots&\ddots&\vdots&\vdots\\
0&\dots&0&C_{m_{N-i}j'}
\end{matrix}\right)
\end{align*}
if $j'>i$, then concludes the proof.
\end{proof}
\end{lemma}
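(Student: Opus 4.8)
\medskip
\noindent\emph{Proof plan.}
The plan is to compute the conjugate $g_B B g_B^{-1}$ directly, exploiting that $g_B$ differs from the identity only inside a single off-diagonal block. Writing $g_B=\idmat+N$ — where, with block rows and columns grouped as in the statement (the $i'$-th block of size $k_{N-i'}$), $N$ has all its nonzero entries, the constants $\nu_{i'}$, inside the $(i,j)$-block $N^{(ij)}$, a $k_{N-i}\times k_{N-j}$ matrix — the first observation is that $N$ maps the $j$-th coordinate block into the $i$-th one and kills everything else; since $i\ne j$ this gives $N^2=0$, so $g_B$ is unipotent with $g_B^{-1}=\idmat-N$ and
\[
g_B B g_B^{-1}=B+NB-BN-NBN .
\]
It therefore suffices to analyse the three correction terms block by block.

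First I would localise the corrections. Because $N$ is supported in the $(i,j)$-block, $NB$ lives in block row $i$ (its $i$-th block row is $N^{(ij)}$ times the $j$-th block row of $B$), $BN$ lives in block column $j$ (its $j$-th block column is the $i$-th block column of $B$ times $N^{(ij)}$), and $NBN$ lives in the $(i,j)$-block alone. The two structural facts that do all the work are that $N^{(ij)}$ has vanishing last row — its nonzero rows are only the first $k_{N-j}-1$, and $k_{N-i}\ge k_{N-j}$ since $i>j$ — and vanishing first column. Combined with the hypotheses on $B$ (namely: $B^{(i',i)}$ has only its last column nonzero for every $i'\ne i$; $B^{(j,j')}$ has only its last column nonzero when $j'>j$ and only its first row nonzero when $j'<j$; and $B^{(j,i)}$ has only its last column nonzero), this yields $B^{(i',i)}N^{(ij)}=0$ for $i'\ne i$, so that $BN$ in fact only affects the $(i,j)$-block; $N^{(ij)}B^{(j,j')}=0$ for $j'<j$, while for $j'>j$ it has only its last column nonzero; and $N^{(ij)}B^{(j,i)}N^{(ij)}=0$, so that $NBN$ vanishes. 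Hence every block other than the $(i,j)$-block and the last columns of the $(i,j')$-blocks with $j'>j$ is left unchanged, and the $(i,j')$-blocks with $j'<j$ are unchanged as well.

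It then remains to read off the transformed $(i,j)$-block, which equals $B^{(ij)}-B^{(ii)}N^{(ij)}+N^{(ij)}B^{(jj)}$. Here $B^{(ii)}N^{(ij)}$ is $N^{(ij)}$ with its rows shifted down by one — only the subdiagonal $1$'s of the companion block $B^{(ii)}$ survive the multiplication, the bottom rows of $N^{(ij)}$ being zero — while $N^{(ij)}B^{(jj)}$ is $N^{(ij)}$ with its rows shifted left by one, its last column turned into the $C$-weighted combinations prescribed by the last column of the companion block $B^{(jj)}$. In every row $r$ with $2\le r\le k_{N-j}$ the down-shifted and left-shifted copies of $N^{(ij)}$ cancel in all columns but the last; the surviving last-column entry is a linear expression in $C_{(m_{N-i+1}+r)j}$, the $\nu_{i'}$, and the entries of the last column of $B^{(jj)}$, and demanding that all $k_{N-j}-1$ of these vanish is precisely the recursion defining $\nu_1,\dots,\nu_{k_{N-j}-1}$, read with the index substitution $i'\leftrightarrow k_{N-j}-i'$. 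Rows $k_{N-j}+1,\dots,k_{N-i}$ vanish automatically, all three summands being zero there; so only the first row of the $(i,j)$-block survives, which is the assertion.

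The main obstacle is not conceptual but one of bookkeeping: one has to pin down the exact support of the nonzero entries of $N^{(ij)}$, of the companion blocks $B^{(ii)}$ and $B^{(jj)}$, and of the various off-diagonal blocks under the block-matrix multiplications, and then verify that the somewhat opaque two-part recursion for the $\nu_{i'}$ is exactly what forces rows $2$ through $k_{N-j}$ of the transformed $(i,j)$-block to vanish. Particular care is needed at the two ends of the recursion (and in the equality case $k_{N-i}=k_{N-j}$), where the top and bottom relations differ from the generic one.
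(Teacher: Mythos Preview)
Your proposal is correct and follows essentially the same route as the paper's proof. The paper computes $Bg_B^{-1}$ first and then $g_B(Bg_B^{-1})$, writing out each intermediate block explicitly as a matrix display; you instead observe $N^2=0$, expand $g_BBg_B^{-1}=B+NB-BN-NBN$, and localise the four terms using the support of $N^{(ij)}$ --- but the underlying computation and the key cancellations are identical, including the verification that the recursion for the $\nu_{i'}$ is exactly what kills rows $2$ through $k_{N-j}$ of the transformed $(i,j)$-block. (Incidentally, both you and the paper actually show that the $(i,j')$-blocks for $j'<j$ are \emph{unchanged}, which is slightly stronger than the ``possibly changes the first row'' in the statement.)
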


\begin{lemma}\label{matrix to hurtubise normal form surjectivity}
Let $i>j$ and let $B'$ be a matrix with $(i',i')$-block of the form
\begin{align*}
\left(\begin{matrix}
0&&&C_{(m_{N-i'+1}+1)i'}'\\
1&\ddots&&\vdots\\
&\ddots&0&C_{(m_{N-i'}-1)i'}'\\
&&1&C_{m_{N-i'}i'}'
\end{matrix}\right)\comma
\end{align*}
$(i',j')$-block for $i'<i$ and $j'<i'$, as well as $i'=i$ and $j'\leq j$, of the form
\begin{align*}
\left(\begin{matrix}
C_{(m_{N-i'+1}+k_{N-j'})j'}'&\dots&C_{(m_{N-i'+1}+1)j'}'\\
0&\dots&0\\
\vdots&\ddots&\vdots\\
0&\dots&0
\end{matrix}\right)\comma
\end{align*}
$(i',j')$-block for $i'>i$ and $j'<i'$, as well as $i'=i$ and $i>j'>j$, of the form
\begin{align*}
\left(\begin{matrix}
0&\dots&0&C_{(m_{N-i'+1}+1)j'}'\\
\vdots&\ddots&\vdots&\vdots\\
0&\dots&0&C_{(m_{N-i'+1}+k_{N-j'})j'}'\\
0&\dots&0&0\\
\vdots&\ddots&\vdots&\vdots\\
0&\dots&0&0
\end{matrix}\right)\comma
\end{align*}
and $(i',j')$-block for $j'>i'$ of the form
\begin{align*}
\left(\begin{matrix}
0&\dots&0&C_{(m_{N-i'+1}+1)j'}'\\
\vdots&\ddots&\vdots&\vdots\\
0&\dots&0&C_{m_{N-i'}j'}'
\end{matrix}\right)\fullstop
\end{align*}
Then $g_B$ conjugates $B$ to $B'$, where $B=g^{-1}B'g$ with $g$ having only possibly the $(i,j)$-block different from $\idmat$, namely given by
\begin{align*}
\left(\begin{matrix}
0&C_{(m_{N-i+1}+k_{N-j})j}'&\dots&C_{(m_{N-i+1}+2)j}'\\
&&\ddots&\vdots\\
&&&C_{(m_{N-i+1}+k_{N-j})j}'\\
0&0&\dots&0\\
\vdots&\vdots&\ddots&\vdots\\
0&0&\dots&0
\end{matrix}\right)\comma
\end{align*}
is of the form as in lemma \ref{matrix to hurtubise normal form}.
\begin{proof}
Define $\nu_{i'}=C_{(m_{N-i+1}+k_{N-j}+1-i'}$. As above, $B'g_B$ agrees with $B'$ in all but the $(i,j)$-block, and the $(i,j)$-block of $B'g_B$ is given by
\begin{align*}
\left(\begin{matrix}
C_{(m_{N-i+1}+k_{N-j})j}'&\dots&C_{(m_{N-i+1}+1)j}'\\
0&\dots&0\\
\vdots&\ddots&\vdots\\
0&\dots&0
\end{matrix}\right)+
\left(\begin{matrix}
0\\
1&0\\
&\ddots&\ddots\\
&&1&0
\end{matrix}\right)
\left(\begin{matrix}
0&\nu_1&\dots&\nu_{k_{N-j}-1}\\
&&\ddots&\vdots\\
&&&\nu_1\\
0&0&\dots&0\\
\vdots&\vdots&\ddots&\vdots\\
0&0&\dots&0
\end{matrix}\right)\fullstop
\end{align*}
The $(i,j)$-block of $g^{-1}_BB'g_B$ is then
\begin{align*}
&\left(\begin{matrix}
C_{(m_{N-i+1}+k_{N-j})j}'&\dots&C_{(m_{N-i+1}+1)j}'\\
0&\dots&0\\
\vdots&\ddots&\vdots\\
0&\dots&0
\end{matrix}\right)+
\left(\begin{matrix}
0&0&\dots&0\\
0&\nu_1&\dots&\nu_{k_{N-j}-1}\\
&&\ddots&\vdots\\
&&&\nu_1\\
0&0&\dots&0\\
\vdots&\vdots&\ddots&\vdots\\
0&0&\dots&0
\end{matrix}\right)\\
&\quad\phantom{{}={}}-\left(\begin{matrix}
0&\nu_1&\dots&\nu_{k_{N-j}-1}\\
&&\ddots&\vdots\\
&&&\nu_1\\
0&0&\dots&0\\
\vdots&\vdots&\ddots&\vdots\\
0&0&\dots&0
\end{matrix}\right)
\left(\begin{matrix}
0&&&C_{(m_{N-j+1}+1)j}'\\
1&\ddots&&\vdots\\
&\ddots&0&C_{(m_{N-j}-1)j}'\\
&&1&C_{m_{N-j}j}'
\end{matrix}\right)\\
&\quad=\left(\begin{matrix}
0&\dots&0&C_{(m_{N-i+1}+1)j}'-\sum_{j'=1}^{k_{N-j}-1}\nu_{j'}C_{(m_{N-j+1}+j')j}'\\
0&\dots&0&-\sum_{j'=1}^{k_{N-j}-2}\nu_{j'}C_{(m_{N-j+1}+j'+1)j}'+\nu_{k_{N-j}-1}\\
\vdots&\ddots&\vdots&\vdots\\
0&\dots&0&\nu_1\\
0&\dots&0&0\\
\vdots&\ddots&\vdots&\vdots\\
0&\dots&0&0
\end{matrix}\right)\fullstop
\end{align*}
The proof is concluded in analogy to the above proof.
\end{proof}
\end{lemma}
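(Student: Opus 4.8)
The plan is to run the block computation from the proof of Lemma \ref{matrix to hurtubise normal form} backwards. First I fix the conjugator: put $\nu_{i'}=C'_{(m_{N-i+1}+k_{N-j}+1-i')j}$, so that the $\nu_{i'}$ are exactly the entries of the (only possibly nonzero) first row of the $(i,j)$-block of $B'$, read in order. With this choice the matrix $g_B$ of Lemma \ref{matrix to hurtubise normal form} has precisely the $(i,j)$-block displayed in the statement and equals $\idmat$ in every other block, so $g:=g_B$ has the asserted shape; it remains to produce a matrix $B$ of the form required by Lemma \ref{matrix to hurtubise normal form} with $g_B B g_B^{-1}=B'$.

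I would take $B:=g_B^{-1}B'g_B$ and verify block by block that it has that form, exactly reversing the computation in the proof of Lemma \ref{matrix to hurtubise normal form}. Since $g_B-\idmat$ maps the $j$-th coordinate block into the $i$-th, conjugation by $g_B$ touches only block row $i$ and block column $j$; in particular the $(j,j)$-block is unchanged, the contribution from $g_B-\idmat$ to it being a product that involves the zero bottom row of the $(i,j)$-block of $g_B$. A single block multiplication then shows that $B'g_B$ differs from $B'$ only in its $(i,j)$-block, picking up there the shift contributed by the subdiagonal of the $(j,j)$-block, and that left multiplication by $g_B^{-1}$ turns the $(i,j)$-block into one whose only possibly nonzero column is the last, which is exactly the shape an $(i,j)$-block must have in Lemma \ref{matrix to hurtubise normal form}; the remaining blocks of block row $i$ keep the shapes prescribed there as well. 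Finally, the recursion $\nu_{i'}=C_{(m_{N-i+1}+k_{N-j}+1-i')j}+\sum_{j'=1}^{i'-1}C_{(m_{N-j}+1-j')j}\nu_{i'-j'}$ of Lemma \ref{matrix to hurtubise normal form}, read as a system for the unknown last-column entries $C_{(m_{N-i+1}+\ell)j}$ of $B$ (the $\nu_{i'}$ and the $(j,j)$-block entries now being known), is lower triangular in $i'$, hence uniquely solvable; with these entries $B$ is of the form in Lemma \ref{matrix to hurtubise normal form}, and applying that lemma to $B$ returns exactly $B'$ together with the chosen $\nu_{i'}$. Thus $g_B B g_B^{-1}=B'$, i.e.\ $B=g^{-1}B'g$ with $g=g_B$.

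The main obstacle is the block bookkeeping of the second paragraph: one must check that conjugating the candidate $B$ forward by $g_B$ returns exactly $B'$, equivalently that the backward reading of the triangular recursion is consistent and never pushes a block of $B$ outside the shapes prescribed in Lemma \ref{matrix to hurtubise normal form}, and that no spurious entries appear in block rows other than $i$. This is the same block arithmetic already carried out in the proof of Lemma \ref{matrix to hurtubise normal form}, only with the roles of input and output interchanged, so I would present the argument as ``reverse the computation of Lemma \ref{matrix to hurtubise normal form}'' and spell out in detail only the inversion of the recursion and the stability of the $(j,j)$-block.
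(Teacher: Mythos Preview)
Your proposal is correct and follows essentially the same approach as the paper's proof. Both define $\nu_{i'}=C'_{(m_{N-i+1}+k_{N-j}+1-i')j}$ from the first row of the $(i,j)$-block of $B'$, set $B=g_B^{-1}B'g_B$, and verify via the same block arithmetic (reversed from Lemma~\ref{matrix to hurtubise normal form}) that $B$ has the required form; the paper dispatches the remaining blocks and the consistency of the recursion with the phrase ``in analogy to the above proof,'' which is exactly the content of your second and third paragraphs.
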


\begin{corollary}\label{bijection of matrices for normal forms}
Let $B=B_{21}$ be a matrix as in lemma \ref{matrix to hurtubise normal form} for $(i,j)=(2,1)$ and define $B_{i(j+1)}=g_{B_{ij}}B_{ij}g_{B_{ij}}^{-1}$ for $j+1<i\leq m$ and $B_{(i+1)1}=g_{B_{i(i-1)}}B_{i(i-1)}g_{B_{i(i-1)}}^{-1}$ for $i<m$. Then the mapping rule $B\mapsto gBg^{-1}$ with
\begin{align*}
g=\prod_{i=1}^{n-1}\prod_{j=1}^{i-1}g_{B_{(N-i)(i-j)}}
\end{align*}
induces a bijection between the set of matrices $B$ as described in lemma \ref{matrix to hurtubise normal form} for $(i,j)=(2,1)$ and the set of matrices with $(i,j)$-block for $j\geq i$ of the same form as $B$, and $(i,j)$-block for $j<i$ with all but possibly the first row vanishing.
\begin{proof}
By lemma \ref{matrix to hurtubise normal form surjectivity}, the given map is surjective and the injectivity follows by using that $g$ can be reconstructed from an element of the image.
\end{proof}
\end{corollary}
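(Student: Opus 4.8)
The plan is to present the composite conjugation $B\mapsto gBg^{-1}$ as the outcome of applying, one after another, the single-step gauge transformations of Lemma~\ref{matrix to hurtubise normal form} in the order dictated by the two recursion rules, and then to read off bijectivity by combining the forward statement of that lemma with its inverse, Lemma~\ref{matrix to hurtubise normal form surjectivity}. Concretely, each passage $B_{ij}\mapsto B_{i(j+1)}$ or $B_{ij}\mapsto B_{(i+1)1}$ is precisely conjugation by the matrix $g_{B_{ij}}$ of Lemma~\ref{matrix to hurtubise normal form} at the pair $(i,j)$, so $g$ is the ordered product of these $g_{B_{ij}}$, with $g_{B_{21}}$ innermost.

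The first point I would settle is that the recursion is \emph{admissible}: whenever a pair $(i,j)$ is reached, the current matrix $B_{ij}$ has exactly the block shape assumed in the hypothesis of Lemma~\ref{matrix to hurtubise normal form} for $(i,j)$. For the first pair this is the definition of the starting set. For the inductive step one invokes the last clause of Lemma~\ref{matrix to hurtubise normal form}: the move at $(i,j)$ brings the $(i,j)$-block to first-row form, only possibly alters the last columns of the $(i,j')$-blocks with $j'>j$ and the first rows of the $(i,j')$-blocks with $j'<j$, and leaves every other block untouched. Since the recursion visits the strictly-lower blocks row by row from the top and, within a row, from the first column to the last, each side effect falls either on a block already in first-row form (and preserves that form), or on a not-yet-visited lower block, or on a diagonal or strictly-upper block; because the nontrivial block of $g_{B_{ij}}$ has its support confined to a narrow band of rows, a short dimension count shows that none of these perturbations destroy the staircase of shapes needed for the later steps — the diagonal blocks stay companion, the strictly-upper blocks keep their last-column shape, and the next lower block to be treated is still of the required ``truncated last column'' type. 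Hence the recursion runs to completion and its output has companion diagonal blocks, strictly-upper blocks in the original last-column shape, and every strictly-lower block in first-row form, i.e.\ it lies in the target set. Since each $g_{B_{ij}}$ is unipotent, hence invertible, this also shows the map $B\mapsto gBg^{-1}$ is well defined and sends the starting set into the target set.

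For surjectivity I would run the recursion backwards. Given $B'$ in the target set, Lemma~\ref{matrix to hurtubise normal form surjectivity} applied at the last pair of the recursion reads off from $B'$ a uniquely determined unipotent $g_{B_{ij}}$ whose single nontrivial block is built from entries of $B'$, and exhibits $g_{B_{ij}}^{-1}B'g_{B_{ij}}$ as a matrix of the shape required by Lemma~\ref{matrix to hurtubise normal form} at that pair. Peeling off the factors in reverse order — checking, exactly as in the admissibility argument, that each intermediate shape is the correct input for the next backward move — one ends at a matrix $B$ in the starting set with $gBg^{-1}=B'$. Injectivity is then the uniqueness already implicit here: from $B'=gBg^{-1}$ the individual factors $g_{B_{ij}}$, hence $g$ itself, are reconstructed recursively from $B'$, so $B=g^{-1}B'g$ is determined by $B'$.

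The heart of the argument is already contained in Lemmas~\ref{matrix to hurtubise normal form} and \ref{matrix to hurtubise normal form surjectivity}; what remains for the corollary is the combinatorial bookkeeping of the side effects, which I expect to be the main obstacle — one must be certain that the perturbations each move inflicts on neighbouring blocks never spoil the block shapes the next move depends on, and that the diagonal and strictly-upper blocks genuinely survive. This is a finite, mechanical check driven by the explicit block formulas, requiring no idea beyond the two lemmas.
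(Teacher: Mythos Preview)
Your proposal is correct and follows essentially the same approach as the paper: the paper's proof invokes Lemma~\ref{matrix to hurtubise normal form surjectivity} for surjectivity and notes that injectivity follows because $g$ can be reconstructed from the image, which is precisely your backward-peeling argument. Your additional admissibility check---that the side effects of each step never spoil the block shapes needed for subsequent steps---is bookkeeping the paper silently takes for granted, but it is indeed the right thing to verify and your account of it is accurate.
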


\section*{Acknowledgement}

The work presented here forms part of the Author's PhD under the supervision of Prof. Dr. Roger Bielawski to whom the Author is very grateful for suggesting this problem and for the many inspiring discussions.

\end{appendices}

% --- Bibliography ---

\printbibliography

% --- Document ends here ---

\end{document}